\DeclareMathOperator{\Spec}{Spec}
\DeclareMathOperator{\ima}{im}
\DeclareMathOperator{\MW}{MW}
\DeclareMathOperator{\W}{W}
\DeclareMathOperator{\odd}{odd}
\DeclareMathOperator{\Qlc}{Qlc}
\DeclareMathOperator{\Qld}{Qld}
\DeclareMathOperator{\AQld}{AQld}
\DeclareMathOperator{\Chow}{CH}
\DeclareMathOperator{\di}{div}
\newcommand{\un}{\underline}
\newcommand{\disp}{\displaystyle}
\newcommand{\fun}[4]{\left\{\begin{matrix} #1 & \to & #2 \\ #3 & \mapsto & #4 \end{matrix}\right.}
\newcommand{\C}{\mathbb{C}}
\newcommand{\R}{\mathbb{R}}
\newcommand{\Q}{\mathbb{Q}}
\newcommand{\Z}{\mathbb{Z}}
\newcommand{\N}{\mathbb{N}}
\newcommand{\Sm}{\mathbb{S}}
\newcommand{\Os}{\mathcal{O}}
\newcommand{\I}{\mathscr{I}}
\newcommand{\A}{\mathbb{A}}
\newcommand{\As}[2]{\mathbb{A}^{#1}_{#2} \setminus \{0\}}
\newcommand{\normalsheaf}[2]{\mathcal{N}_{#1/#2}}
\newcommand{\cono}[2]{\mathcal{C}_{#1/#2}}
\newcommand{\Lb}{\mathcal{L}}
\newcommand{\Vb}{\mathcal{V}}
\newcommand{\Cm}{\mathcal{C}}
\newcommand{\KS}[3]{K^{\MW}_{#2}(#1,#3)}
\newcommand{\K}[2]{K^{\MW}_{#2}(#1)}
\newcommand{\KMWF}[2]{\un{K}^{\MW}_{#1}\{#2\}}
\newcommand{\KF}[1]{\un{K}^{\MW}_{#1}}
\newcommand{\CHMW}[4]{H^{#2}(#1,\KMWF{#3}{#4})}
\newcommand{\CH}[3]{H^{#2}(#1,\KF{#3})}
\newcommand{\qlc}[1]{\Qlc_{#1}}
\newcommand{\qld}[1]{\Qld_{#1}}
\newcommand{\aqld}[1]{\AQld_{#1}}
\newcommand{\seifert}{\mathcal{S}}
\newtheorem{theorem}{Theorem}[section]
\newtheorem{theoremdefinition}[theorem]{Theorem-Definition}
\newtheorem{proposition}[theorem]{Proposition}
\newtheorem{lemma}[theorem]{Lemma}
\newtheorem{lemmadefinition}[theorem]{Lemma-Definition}
\newtheorem{corollary}[theorem]{Corollary}
\theoremstyle{definition}
\newtheorem{definition}[theorem]{Definition}
\newtheorem{notation}[theorem]{Notation}
\newtheorem{example}[theorem]{Example}
\newtheorem{examples}[theorem]{Examples}
\theoremstyle{remark}
\newtheorem{remark}[theorem]{Remark}
\begin{document}

\title{The quadratic linking degree}
\author{Cl\'ementine Lemari\'e\texttt{-{-}}Rieusset}
\address{Institut de Math\'ematiques de Bourgogne, UMR 5584, CNRS, Universit\'e de Bourgogne, F-21000 Dijon, France}
\address{Fakultät für Mathematik, Universität Duisburg-Essen, Campus Essen, 45117 Essen, Germany}
\email{clementine.lemarierieusset@uni-due.de}
%\urladdr{}

\begin{abstract}
	By using motivic homotopy theory, we introduce a counterpart in algebraic geometry to oriented links and their linking numbers. After constructing the (ambient) quadratic linking degree --- our analogue of the linking number which takes values in the Witt group of the ground field --- and exploring some of its properties, we give a method to explicitly compute it. We illustrate this method on a family of examples which are analogues of torus links, in particular of the Hopf and Solomon links.
\end{abstract}

\subjclass[2020]{Primary 14F42, 57K10; Secondary 11E81, 14C25, 19E15.}

\keywords{Motivic homotopy theory, Knot theory, Links, Witt groups, Milnor-Witt $K$-theory, Rost-Schmid complex.}

\maketitle

\tableofcontents

\section{Introduction}\label{secintro}

In 1999, Morel and Voevodsky founded motivic homotopy theory (see \cite{morelvoevodsky}) in order to import topological methods into algebraic geometry (see also the second section of \cite{AsokOst} for a brief overview). The goal of this paper is to explore the possibility of defining a counterpart to knot theory in algebraic geometry by using motivic homotopy theory. Specifically, we develop an algebro-geometric theory of linking in a setting inspired by classical linking in knot theory and follow it through to explicit computations on families of algebraic varieties.

We begin by defining counterparts, over a perfect field $F$, to oriented links with two components (i.e. couples of disjoint oriented knots). Specifically, we replace the circle $\Sm^1$ with $\As{2}{F}$ and the $3$-sphere $\Sm^3$ with $\As{4}{F}$. We then define a counterpart to the linking number, which in knot theory is an invariant of oriented links with two components which corresponds to the number of times one of the oriented components turns around the other oriented component, and a counterpart to the linking couple, which is a couple of integers whose absolute values coincide with the absolute value of the linking number. We call these counterparts the ambient quadratic linking degree and the quadratic linking degree (couple) respectively. To define these, we use Chow-Witt groups, and more generally Rost-Schmid groups, instead of the singular cohomology groups used in classical knot theory. The Rost-Schmid groups are the cohomology groups of the Rost-Schmid complex (see Appendix \ref{secprelim}) which is built from Milnor-Witt $K$-theory groups; this is where motivic homotopy theory comes into play, as Milnor-Witt $K$-theory groups are in fact groups of morphisms of motivic spheres in the stable motivic homotopy category (see \cite[Corollary 1.25]{morel}).

In knot theory, the cohomological definitions of the linking number use the notion of Seifert class of an oriented knot, which is the class (in cohomology) of Seifert surfaces of an oriented knot (compact connected oriented surfaces whose oriented boundary is the oriented knot; see for instance \cite[Chapter 5, Sections A and D]{rolfsen}). The following theorem-definition establishes the possibility of defining the quadratic linking class, the quadratic linking degree and the ambient quadratic linking degree in a similar manner to the linking class, the linking couple and the linking number respectively (see \cite[Chapter 1]{myphdthesis} for more details on these).

%Explain quickly about the linking class, the linking number and the linking couple (and the fact about the absolute values, which don't depend on the orientations) then take out the paragraph after the theorem-definition.

\begin{theoremdefinition}[Quadratic linking degrees]
	Let $Z = Z_1 \sqcup Z_2 \subset \As{4}{F}$, with $Z_1 \simeq \As{2}{F}$ and $Z_2 \simeq \As{2}{F}$, be an oriented link with two components (see Definition \ref{deflink} for details). There exist two elements of the Chow-Witt group $\widetilde{CH}^1((\As{4}{F}) \setminus Z)$ --- called Seifert classes (see Definition \ref{Seifert}) --- such that their intersection product in $\widetilde{CH}^2((\As{4}{F}) \setminus Z)$ and its image by the boundary map $\partial : \widetilde{CH}^2((\As{4}{F}) \setminus Z) \to \CHMW{Z}{1}{0}{\det(\normalsheaf{Z}{\As{4}{F}})}$ --- called the quadratic linking class (see Definition \ref{defqlc}) --- only depend on the oriented link $Z$. Denoting by $\W(F)$ the Witt group of $F$, we call the image of the quadratic linking class by the isomorphism $\CHMW{Z}{1}{0}{\det(\normalsheaf{Z}{\As{4}{F}})} \to \W(F) \oplus \W(F)$ the quadratic linking degree (see Definition \ref{defqld}) and the image of the pushforward (by the inclusion) of the part of the quadratic linking class which is supported on $Z_1$ by the isomorphism $\CH{\As{4}{F}}{3}{1} \to \W(F)$ the ambient quadratic linking degree (see Definition \ref{defaqld}).
\end{theoremdefinition}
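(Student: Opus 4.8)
The plan is to exhibit the quadratic linking class as the output of a chain of canonical operations whose only non-canonical ingredient is the choice of a Seifert divisor for each component, and then to remove that ambiguity using the vanishing of the low-degree Rost--Schmid cohomology of punctured affine space.

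\textbf{Existence of the Seifert classes.} Put $U = (\As{4}{F}) \setminus Z$, which is smooth since it is open in $\As{4}{F}$ and $F$ is perfect. For a component $Z_i \simeq \As{2}{F}$, the orientation trivializes $\det \normalsheaf{Z_i}{\As{4}{F}}$ and so singles out a ``fundamental cycle'' $[Z_i]$, a cocycle of the Rost--Schmid complex of $\As{4}{F}$ with coefficients in $\KF{1}$ supported on $Z_i$, whose class lies in $H^2_{Z_i}(\As{4}{F}, \KF{1})$. The main computational input is the Rost--Schmid cohomology of $\As{n}{F}$ for $n \ge 2$ (also in twisted form, using $\operatorname{Pic}(\As{n}{F}) = 0$): it is concentrated in cohomological degrees $0$ and $n-1$, with $\CH{\As{n}{F}}{n-1}{j} \cong \K{F}{j-n}$. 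In particular $H^2(\As{4}{F}, \KF{1}) = 0$ and $\widetilde{CH}^1(\As{4}{F}) = H^1(\As{4}{F}, \KF{1}) = 0$. Since $[Z_i]$ is a cocycle and $H^2(\As{4}{F}, \KF{1}) = 0$, there is a codimension-$1$ Rost--Schmid chain $D_i$ on $\As{4}{F}$ with $d D_i = [Z_i]$, a Seifert divisor of $Z_i$. As $d D_i$ is supported on $Z$, the restriction $D_i|_U$ is a cocycle, and the class $\mathfrak{s}_i \in \widetilde{CH}^1(U)$ it represents --- the Seifert class --- satisfies $\partial \mathfrak{s}_i = [Z_i]$ in $H^2_Z(\As{4}{F}, \KF{1})$ for the localization boundary. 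Two Seifert divisors of $Z_i$ differ by a Rost--Schmid cocycle on $\As{4}{F}$, whose class in $\widetilde{CH}^1(\As{4}{F}) = 0$ vanishes, hence they differ by a coboundary; restricting to $U$, the class $\mathfrak{s}_i$ is independent of the choice of $D_i$. Thus $\mathfrak{s}_1$ and $\mathfrak{s}_2$ depend only on the oriented link.

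\textbf{The quadratic linking class and the two degrees.} Since $U$ is smooth, the Chow--Witt product yields $\mathfrak{s}_1 \cdot \mathfrak{s}_2 \in \widetilde{CH}^2(U)$, and the boundary map $\partial \colon \widetilde{CH}^2(U) \to \CHMW{Z}{1}{0}{\det \normalsheaf{Z}{\As{4}{F}}}$ of the statement --- the composite of the localization boundary into $H^3_Z(\As{4}{F}, \KF{2})$ with the purity (d\'evissage) isomorphism --- produces the quadratic linking class $\partial(\mathfrak{s}_1 \cdot \mathfrak{s}_2)$. Every operation involved is canonical, so this class depends only on $Z$. Writing $Z = Z_1 \sqcup Z_2$ and using $Z_i \simeq \As{2}{F}$ together with the computation above, each summand is $\CHMW{Z_i}{1}{0}{\det \normalsheaf{Z_i}{\As{4}{F}}} \cong \CH{\As{2}{F}}{1}{0} \cong \K{F}{-2} = \W(F)$, the orientation trivializing the twist; this is the isomorphism onto $\W(F) \oplus \W(F)$ defining the quadratic linking degree. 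Pushing the part of the quadratic linking class supported on $Z_1$ forward along the closed immersion $Z_1 \hookrightarrow \As{4}{F}$ lands it in $\CH{\As{4}{F}}{3}{1} \cong \K{F}{-3} = \W(F)$, which is the ambient quadratic linking degree; both are well defined because they are images of the quadratic linking class under fixed homomorphisms.

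\textbf{Expected main obstacle.} I expect the delicate points to be the careful construction of the cup product and the boundary map at the level of Rost--Schmid complexes, and the identification of the target of $\partial$ with $\CHMW{Z}{1}{0}{\det \normalsheaf{Z}{\As{4}{F}}}$ by purity with the correct twist and sign conventions. A secondary point arises if Definition \ref{Seifert} only requires $\partial \mathfrak{s}_i$ to agree with $[Z_i]$ on the component $Z_i$ (allowing a nonzero contribution on the other component): one must then check that the resulting residual freedom in $\mathfrak{s}_1$, constrained by the exact sequence to the image of $\widetilde{CH}^1((\As{4}{F}) \setminus Z_2)$ in $\widetilde{CH}^1(U)$, is annihilated after forming the product with $\mathfrak{s}_2$ and applying $\partial$, using the module structure of the localization sequence and the $\varepsilon$-graded-commutativity of the Chow--Witt ring together with the vanishing statements above.
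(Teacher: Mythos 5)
Your proposal is correct and follows essentially the same route as the paper: the existence and uniqueness of the Seifert classes rest on the vanishing of $\CH{\As{4}{F}}{1}{1}$ and $\CH{\As{4}{F}}{2}{1}$ (which the paper packages as the statement that the boundary map $\partial : \CH{(\As{4}{F})\setminus Z}{1}{1} \to \CHMW{Z}{0}{-1}{\nu_Z}$ is an isomorphism, whereas you unwind it at the level of Rost--Schmid cochains), and the remaining steps are the same chain of canonical operations; your closing worry about residual freedom in $\seifert_1$ does not arise because the definition pins down $\partial(\seifert_i)$ on \emph{both} components. One minor bookkeeping point: the pushforward of $\sigma_{1,\mathscr{L}} \in \CHMW{Z_1}{1}{0}{\nu_{Z_1}}$ lands in $\CH{\As{4}{F}}{3}{2} \simeq \K{F}{-2}$ rather than $\K{F}{-3}$ (the twist in the statement is a typo for $2$), though both are canonically $\W(F)$ so nothing breaks.
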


Let us illustrate this definition on the Hopf link $Z = \{x=y=0\} \sqcup \{z=t=0\} \subset \Spec(F[x,y,z,t]) \setminus \{0\}$ over a perfect field $F$ (see Example \ref{exHopf}). Its Seifert classes are the classes of $\langle x\rangle \otimes \overline{y}^*$ and $\langle z\rangle \otimes \overline{t}^*$ in $\widetilde{CH}^1((\As{4}{F}) \setminus Z)$, their intersection product is the class of $\langle x z\rangle \otimes (\overline{t}^* \wedge \overline{y}^*)$ in $\widetilde{CH}^2((\As{4}{F}) \setminus Z)$ and the quadratic linking class is the class of $-\langle z\rangle \eta \otimes (\overline{t}^* \wedge \overline{x}^* \wedge \overline{y}^*) \oplus \langle x\rangle \eta \otimes (\overline{y}^* \wedge \overline{z}^* \wedge \overline{t}^*)$ in $\CHMW{Z}{1}{0}{\det(\normalsheaf{Z}{\As{4}{F}})} \simeq \CHMW{Z_1}{1}{0}{\det(\normalsheaf{Z_1}{\As{4}{F}})} \oplus \CHMW{Z_2}{1}{0}{\det(\normalsheaf{Z_2}{\As{4}{F}})}$, which gives $(-1,1) \in \W(F) \oplus \W(F)$ as quadratic linking degree and $-1 \in \W(F)$ as ambient quadratic linking degree.

In Section \ref{secdef}, we give the definitions of the quadratic linking class, the quadratic linking degree and the ambient quadratic linking degree, then we determine how they depend on choices of orientations and of parametrisations of $\As{2}{F} \to \As{4}{F}$ (see Lemma \ref{stchanges}) and deduce invariants of the (ambient) quadratic linking degree (see Corollaries \ref{strankmod2} and \ref{stinvR} and Theorem \ref{stseriesofinvariants}). For instance, in the case $F = \R$, the absolute values of the components of the quadratic linking degree and of the ambient quadratic linking degree (which are in $\W(\R) \simeq \Z$) are invariant under changes of orientations and of parametrisations of $\As{2}{F} \to \As{4}{F}$. This is similar to the fact that the absolute value of the linking number does not depend on choices of orientations. In the general case, the ranks modulo $2$ of the components of the quadratic linking degree and of the ambient quadratic linking degree are invariants, and more importantly we have the following lemma-definition and theorem:

\begin{lemmadefinition}
		Let $d = \sum_{i=1}^n \langle a_i\rangle \in \W(F)$. There exists a unique sequence of abelian groups $Q_{d,k}$ and of elements $\Sigma_k(d) \in Q_{d,k}$, where $k$ ranges over the nonnegative even integers, such that $Q_{d,0} = \W(F)$, $\Sigma_0(d) = 1 \in Q_{d,0}$ and:
	\begin{itemize}
		\item for each positive even integer $k$, $Q_{d,k}$ is the quotient group $Q_{d,k-2}/(\Sigma_{k-2}(d))$;
		\item for each positive even integer $k$, $\Sigma_k(d) = \sum_{1 \leq i_1 < \dots < i_k \leq n} \langle \prod_{1 \leq j \leq k} a_{i_j}\rangle \in Q_{d,k}$.
	\end{itemize}
\end{lemmadefinition}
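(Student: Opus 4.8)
The plan is to split the statement into a trivial half and a substantial half. For a \emph{fixed} expression $d=\sum_{i=1}^n\langle a_i\rangle$, existence and uniqueness of the sequence is a one-line induction on $k$: $Q_{d,0}$ and $\Sigma_0(d)$ are prescribed, and once $Q_{d,k-2}$ together with $\Sigma_{k-2}(d)\in Q_{d,k-2}$ is known, $Q_{d,k}:=Q_{d,k-2}/(\Sigma_{k-2}(d))$ is forced and $\Sigma_k(d)$ is forced to be the image in $Q_{d,k}$, under the canonical surjection, of the element $\sum_{1\le i_1<\dots<i_k\le n}\langle\prod_{1\le j\le k}a_{i_j}\rangle$ of $\W(F)=Q_{d,0}$. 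The real content --- and what makes it legitimate to write $Q_{d,k}$ and $\Sigma_k(d)$ as functions of $d$ alone --- is that the sequence does not depend on the chosen diagonalisation of $d$; this is also what the ensuing theorem on invariants needs, so it is the half I would actually prove.

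To do so I would package the data in the generating polynomial $P(t):=\prod_{i=1}^n(1+\langle a_i\rangle t)\in\W(F)[t]$, whose $t^k$-coefficient is exactly $\sigma_k:=\sum_{i_1<\dots<i_k}\langle\prod_{j}a_{i_j}\rangle$; unwinding the recursion, $Q_{d,k}=\W(F)/H_k$ where $H_k$ is the subgroup of $\W(F)$ generated by $\sigma_0,\sigma_2,\dots,\sigma_{k-2}$, and $\Sigma_k(d)=\sigma_k+H_k$. So it suffices to check that $H_k$ and the coset $\sigma_k+H_k$ are unchanged under a change of diagonalisation. By Witt's theorems (chain equivalence together with cancellation, equivalently the presentation of $\W(F)$), any two diagonalisations of a fixed $d$ are joined by finitely many elementary moves: permuting the $\langle a_i\rangle$; replacing a pair $(a_i,a_j)$ by a pair $(a_i',a_j')$ with $a_ia_j\equiv a_i'a_j'$ modulo squares and $\langle a_i\rangle+\langle a_j\rangle=\langle a_i'\rangle+\langle a_j'\rangle$ in $\W(F)$; and inserting or deleting a hyperbolic pair $\langle b\rangle+\langle-b\rangle$.

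Under the first two moves $P(t)$ is literally unchanged: for the second one, comparing the coefficients of $t$ and of $t^2$ gives $(1+\langle a_i\rangle t)(1+\langle a_j\rangle t)=(1+\langle a_i'\rangle t)(1+\langle a_j'\rangle t)$, so all $\sigma_k$, all $H_k$ and all cosets survive verbatim. The only delicate move --- and the step I expect to be the crux --- is the hyperbolic insertion: it multiplies $P(t)$ by $(1+\langle b\rangle t)(1+\langle-b\rangle t)=1-t^2$ (using $\langle b\rangle+\langle-b\rangle=0$ and $\langle b\rangle\langle-b\rangle=\langle-1\rangle=-1$ in $\W(F)$), so each $\sigma_k$ is replaced by $\sigma_k-\sigma_{k-2}$. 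This is an invertible, unitriangular modification of the list $(\sigma_0,\sigma_2,\sigma_4,\dots)$, so the subgroup $H_k=\langle\sigma_0,\dots,\sigma_{k-2}\rangle$ is unchanged, and $\sigma_k-\sigma_{k-2}\equiv\sigma_k\pmod{H_k}$ since $\sigma_{k-2}\in H_k$; hence $H_k$ and $\sigma_k+H_k$ are unchanged, as needed. The point to appreciate is that the error term produced by a hyperbolic insertion is exactly the element one has already divided out in passing from $Q_{d,k-2}$ to $Q_{d,k}$, which is why the recursion closes up and why the statement tracks only the even-indexed $\sigma_k$ instead of all of them.
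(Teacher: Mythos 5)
Your proof is correct, and its core coincides with the paper's: both reduce well-definedness of $\Sigma_k(d)$ to checking invariance under the defining relations of $\W(F)$ (square classes, the binary relation $\langle a\rangle+\langle b\rangle=\langle a+b\rangle+\langle(a+b)ab\rangle$, and $\langle 1\rangle+\langle-1\rangle=0$), and in both the crux is that a hyperbolic insertion replaces $\sigma_k$ by $\sigma_k-\sigma_{k-2}$, an error term that is exactly what was quotiented out in passing from $Q_{d,k-2}$ to $Q_{d,k}$. What differs is the packaging, and yours is a real gain in transparency: the paper descends through its presentation $G\to G_1\to G_2\to\W(F)$ and verifies the formula against each relation by direct manipulation of multiple indexed sums (its third step in particular), whereas encoding the $\sigma_k$ as coefficients of $P(t)=\prod_i(1+\langle a_i\rangle t)\in\W(F)[t]$ makes the binary move leave $P(t)$ literally fixed, turns the hyperbolic move into multiplication by $1-t^2$, and reduces the conclusion to a one-line unitriangular change of generators of $H_k$. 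Two minor caveats. First, you invoke Witt's chain equivalence theorem, a characteristic-$\neq 2$ statement, while the paper's relation-by-relation check is characteristic-free; this is harmless here (over a perfect field of characteristic $2$ every unit is a square and $\W(F)\simeq\Z/2\Z$, so the claim is immediate), but since the paper allows arbitrary perfect fields you should either say this or work directly from the presentation, as your own parenthetical concedes is equivalent. Second, your list of elementary moves should include the unary move $\langle a\rangle\mapsto\langle ab^2\rangle$ (your binary move only subsumes it when $n\geq 2$); it obviously leaves $P(t)$ unchanged since the coefficients already live in $\W(F)$, so nothing breaks.
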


\begin{theorem}
	Let $\mathscr{L}$ be an oriented link with two components and $k > 0$ be even. We denote the quadratic linking degree of $\mathscr{L}$ by $\Qld_{\mathscr{L}} = (d_1,d_2) \in \W(F) \oplus \W(F)$ and the ambient quadratic linking degree by $\aqld{\mathscr{L}}$. Then $\Sigma_k(d_1)$, $\Sigma_k(d_2)$ and $\Sigma_k(\aqld{\mathscr{L}})$ are invariant under changes of orientation classes $\overline{o_1},\overline{o_2}$ and under changes of parametrisations of $\varphi_1, \varphi_2 : \As{2}{F} \to \As{4}{F}$.
\end{theorem}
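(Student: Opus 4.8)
The strategy is to reduce the theorem to the preceding Lemma-Definition plus the already-established description (Lemma~\ref{stchanges}) of how the quadratic linking degree and the ambient quadratic linking degree transform under changes of orientation classes and parametrisations. First I would recall from Lemma~\ref{stchanges} the explicit form of these transformations: a change of orientation class $\overline{o_i}$ or of parametrisation $\varphi_i$ multiplies the relevant component $d_i$ (respectively $\aqld{\mathscr{L}}$) by a unit $\langle u\rangle$ for some $u \in F^\times$, i.e. replaces $d = \sum_{j=1}^n \langle a_j\rangle$ by $\langle u\rangle d = \sum_{j=1}^n \langle u a_j\rangle$. (There may also be a sign/swap in the two components coming from orientation reversal of the ambient space or exchange of $Z_1,Z_2$; I would treat those cases separately, but each of them again only rescales each component by a unit and possibly permutes them, which does not affect the conclusion since $\Sigma_k$ is applied componentwise.) So the whole theorem follows once I show:

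\emph{Claim.} For any $d = \sum_{i=1}^n \langle a_i\rangle \in \W(F)$ and any $u \in F^\times$, one has $\Sigma_k(\langle u\rangle d) = \Sigma_k(d)$ in $Q_{d,k}$, where moreover $Q_{\langle u\rangle d, k} = Q_{d,k}$ (the groups themselves do not change).

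To prove the Claim I would argue by induction on the even integer $k$, simultaneously proving that $Q_{\langle u\rangle d,k} = Q_{d,k}$ and $\Sigma_k(\langle u\rangle d) = \Sigma_k(d)$. The base case $k=0$ is immediate: $Q_{d,0} = \W(F) = Q_{\langle u\rangle d,0}$ and $\Sigma_0(\langle u\rangle d) = 1 = \Sigma_0(d)$. For the inductive step, the key computation is in $\W(F)$ itself (i.e.\ in $Q_{d,0}$), where $\langle u\rangle^2 = \langle u^2\rangle = \langle 1\rangle = 1$, so that for any product of $k$ of the $a_i$'s,
\[
\left\langle \prod_{j=1}^k u\, a_{i_j}\right\rangle = \langle u\rangle^k \left\langle \prod_{j=1}^k a_{i_j}\right\rangle = \left\langle \prod_{j=1}^k a_{i_j}\right\rangle
\]
since $k$ is even and $\langle u\rangle^k = (\langle u\rangle^2)^{k/2} = 1$. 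Hence the formula defining $\Sigma_k$ gives literally the same element of $\W(F)$ for $\langle u\rangle d$ as for $d$, \emph{before} passing to any quotient. By the inductive hypothesis $Q_{\langle u\rangle d, k-2} = Q_{d,k-2}$ and $\Sigma_{k-2}(\langle u\rangle d) = \Sigma_{k-2}(d)$, so the quotient defining $Q_{\langle u\rangle d,k} = Q_{\langle u\rangle d,k-2}/(\Sigma_{k-2}(\langle u\rangle d))$ equals $Q_{d,k-2}/(\Sigma_{k-2}(d)) = Q_{d,k}$; and the image of the (identical) sum in this common quotient is the same, giving $\Sigma_k(\langle u\rangle d) = \Sigma_k(d)$. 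This closes the induction and proves the Claim.

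Finally I would assemble the pieces: by Lemma~\ref{stchanges}, changing the orientation classes $\overline{o_1},\overline{o_2}$ or the parametrisations $\varphi_1,\varphi_2$ replaces $(d_1,d_2)$ by $(\langle u_1\rangle d_1, \langle u_2\rangle d_2)$ (up to a possible permutation of the two factors) for suitable $u_1,u_2 \in F^\times$, and replaces $\aqld{\mathscr{L}}$ by $\langle u\rangle \aqld{\mathscr{L}}$ for a suitable $u \in F^\times$; applying the Claim to each component yields $\Sigma_k(\langle u_i\rangle d_i) = \Sigma_k(d_i)$ and $\Sigma_k(\langle u\rangle \aqld{\mathscr{L}}) = \Sigma_k(\aqld{\mathscr{L}})$, which is exactly the invariance asserted. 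The only genuine subtlety — and the step I expect to need the most care — is bookkeeping the case analysis from Lemma~\ref{stchanges} (which unit multiplies which component, and whether the two components get swapped), making sure that in every case each component is only rescaled by a single unit so that the evenness of $k$ can be exploited; the algebra in $\W(F)$ itself is entirely routine once $\langle u\rangle^2 = 1$ is invoked.
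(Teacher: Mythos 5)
Your proposal is correct and follows essentially the same route as the paper: reduce via Lemma \ref{stchanges} to the observation that each component is only rescaled by some $\langle u\rangle$, then note that $\Sigma_k(\langle u\rangle d)=\Sigma_k(d)$ because $u^k$ is a square when $k$ is even. Your extra induction verifying $Q_{\langle u\rangle d,k}=Q_{d,k}$ is a welcome bit of care that the paper's proof leaves implicit, but it does not change the argument.
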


Note that even though these invariants give the same value on $d_1,d_2$ and $\aqld{\mathscr{L}}$ in all of the examples in this paper, the author has found no reason for this to be true in general (over an arbitrary perfect field). The author conjectures that this is true in the real case, as this is a direct consequence of the conjecture that the real realization of the ambient quadratic linking degree is the linking number and that the real realization of the quadratic linking degree is the linking couple, at least up to a sign (this is a work in progress). In the classical case, the equality between each component of the linking couple and the linking number up to a sign results from the fact that the linking number is the image of each component of the linking couple by a surjective group morphism $\Z \to \Z$ (i.e. $\W(\R) \to \W(\R)$) and from the fact that there are only two such morphisms: the identity on $\Z$ and its opposite. On the other hand, for an arbitrary perfect field $F$, the ambient quadratic linking degree is the image of each component of the quadratic linking degree by a group morphism $\W(F) \to \W(F)$ which may or may not be surjective, and in addition there may be many surjective group morphisms $\W(F) \to \W(F)$.

In Section \ref{sechowto} we give a method to explicitly compute the quadratic linking class (see Theorem \ref{thmqlc}), the quadratic linking degree (see Theorem \ref{thmqld}) and the ambient quadratic linking degree (see Theorem \ref{thmaqld}) when the link $Z_1 \sqcup Z_2 \subset \As{4}{F}$ is such that for each $i \in \{1,2\}$ the closure $\overline{Z_i} \subset \A^4_F$ of $Z_i$ is given by two irreducible equations $\{f_i=0,g_i=0\}$ such that $\{g_1=0,g_2=0\}$ is of codimension $2$ in $(\As{4}{F}) \setminus (Z_1 \sqcup Z_2)$ and for each generic point $p$ of an irreducible component of $\{g_1=0,g_2=0\}$, $f_1$ and $f_2$ are units in the residue field $\kappa(p)$.

In Section \ref{seccompute} we compute the quadratic linking class, the quadratic linking degree and the ambient quadratic linking degree (as well as their invariants) on several examples. The Examples \ref{exbinary} (which we call binary links) showcase the usefulness of the invariant $\Sigma_2$ by showing that it can distinguish between an infinity of different links. The Examples \ref{extorus} are inspired by the torus links $T(2,2n)$ of linking number $n$ (the Hopf link if $n = 1$, the Solomon link if $n = 2$ and the $n$-gonal link (two intertwined $n$-gons) if $n \geq 3$).

In Appendix \ref{secresidue} we give an explicit definition (one which allows computations) of the residue morphisms of Milnor-Witt $K$-theory (see Theorem \ref{computeresidue}), which is used in Sections \ref{sechowto} and \ref{seccompute}.

In Appendix \ref{secprelim} we recall some useful notions about the Rost-Schmid complex and its groups.\\

\textbf{Acknowledgements.} The author thanks Fr\'ed\'eric D\'eglise and Adrien Dubouloz for their mentoring during her PhD, and more generally everyone (including the referees) who provided feedback on this work. The author was partially supported by Project ISITE-BFC ANR-15-IDEX-0008 \textquotedblleft Motivic Invariants of Algebraic Varieties\textquotedblright{} and ANR Project PRC \textquotedblleft HQDIAG\textquotedblright{} ANR-21-CE40-0015 \textquotedblleft Motivic Homotopy, Quadratic Invariants and Diagonal Classes\textquotedblright{}. The IMB received support from the EIPHI Graduate School (contract ANR-17-EURE-0002). The author was then supported by the Research Training Group 2553 \textquotedblleft Symmetries and Classifying Spaces: Analytic, Arithmetic and Derived\textquotedblright{}, funded by the German Research Foundation DFG.

\section{The quadratic linking degree}\label{secdef}

In this section, we define oriented links with two components, oriented fundamental classes (and cycles), Seifert classes (and divisors) relative to the link, the quadratic linking class and the (ambient) quadratic linking degree of the link. We then explicit how the quadratic linking class and the (ambient) quadratic linking degree depend on choices of orientations and parametrisations of $\As{2}{F} \to \As{4}{F}$ and deduce a series of invariants of the quadratic linking degree.

\subsection{Conventions and notations}\label{subconv}

Throughout this section, $F$ is a perfect field, we put $\A^2_F = \Spec(F[u,v])$, $\A^4_F = \Spec(F[x,y,z,t])$ and $X := \As{4}{F}$.

For $Z$ a smooth closed subscheme of a smooth scheme $Y$, we denote by $\normalsheaf{Z}{Y}$ the normal sheaf of $Z$ in $Y$, i.e. the dual of the $\Os_Z$-module $\I_Z/\I_Z^2$ with $\I_Z$ the ideal sheaf of $Z$ in $Y$. 

We denote the usual generators of the Milnor-Witt $K$-theory ring of $F$ by $[a] \in \K{F}{1}$ (with $a \in F^*$) and $\eta \in \K{F}{-1}$ (see \cite[Definition 3.1]{morel}). We put $\langle a\rangle := 1 + \eta [a] \in \K{F}{0}$.

For $Y$ a smooth finite-type $F$-scheme, $j \in \Z$ and $\Lb$ an invertible $\Os_Y$-module, we denote  the Rost-Schmid complex by $\Cm(Y,\KMWF{j}{\Lb})$ (see Definition \ref{defRScomp}) and the $i$-th Rost-Schmid group of this complex by $\CHMW{Y}{i}{j}{\Lb}$ (see Definition \ref{defRSgroups}) or simply $\CH{Y}{i}{j}$ if $\Lb = \Os_Y$.

We identify $\CH{\As{2}{F}}{1}{0}$ with $\W(F)$ via the (noncanonical) isomorphism $\zeta : \CH{\As{2}{F}}{1}{0} \to \W(F)$ which factorizes as follows:
\[\xymatrix{\CH{\As{2}{F}}{1}{0} \ar[r]^-{\partial} & \CHMW{\{0\}}{0}{-2}{\det(\normalsheaf{\{0\}}{\A^2_F})} \ar[r] & \K{F}{-2} \ar[r]^-{\eta^2 \mapsto 1} & \W(F)}\]
where $\partial$ is the boundary map (see Definition \ref{defboundarymap} and Theorem \ref{locseq}) and the map in the middle is induced by the isomorphism $\det(\normalsheaf{\{0\}}{\A^2_F}) \to \Os_{\{0\}} \otimes \Os_{\{0\}}$ which sends $\overline{u}^* \wedge \overline{v}^*$ to $1 \otimes 1$ (see Definition \ref{conviso}). We also identify $\CH{\As{4}{F}}{3}{2}$ with $\W(F)$ via the (noncanonical) isomorphism $\zeta' : \CH{\As{4}{F}}{3}{2} \to \W(F)$ which is defined in a similar manner to $\zeta$, with $\overline{x}^* \wedge \overline{y}^* \wedge \overline{z}^* \wedge \overline{t}^* \in \det(\normalsheaf{\{0\}}{\A^4_F})$ instead of $\overline{u}^* \wedge \overline{v}^* \in \det(\normalsheaf{\{0\}}{\A^2_F})$ (see Definition \ref{convisobis}).

\subsection{Definitions of the quadratic linking class and degree}\label{subdef}

In this subsection, we give a series of definitions which conclude with the definitions of the quadratic linking class and the (ambient) quadratic linking degree of an oriented link with two components.

In order to define oriented links with two components, we need the following definition (which was given by Morel in \cite{morel}).

\begin{definition}[Orientation of a locally free module]\label{orientation}
	An \emph{orientation} of a locally free module $\Vb$ of constant finite rank $r$ over an $F$-scheme $Y$ is an isomorphism $o : \det(\Vb) = \Lambda^r(\Vb) \to \Lb \otimes \Lb$ where $\Lb$ is an invertible $\Os_Y$-module.
	
	Two orientations $o : \det(\Vb) \to \Lb \otimes \Lb, o' : \det(\Vb) \to \Lb' \otimes \Lb'$ are said to be equivalent if there exists an isomorphism $\psi : \Lb \to \Lb'$ such that $(\psi \otimes \psi) \circ o = o'$. The equivalence class of $o$, denoted $\overline{o}$, is called the \emph{orientation class} of $o$.
\end{definition}

\begin{definition}[Oriented link with two components]\label{deflink}
	An \emph{oriented link} $\mathscr{L}$ with two components is the following data:
	\begin{itemize}
		\item a couple of closed immersions $\varphi_i : \As{2}{F} \to X$ with disjoint images $Z_i$;
		\item for $i \in \{1,2\}$, an orientation class $\overline{o_i}$ of the normal sheaf $\normalsheaf{Z_i}{X}$, represented by an isomorphism $o_i : \nu_{Z_i} := \det(\normalsheaf{Z_i}{X}) \to \Lb_i \otimes \Lb_i$.
		\end{itemize}
	We denote $Z := Z_1 \sqcup Z_2$, $\nu_Z := \det(\normalsheaf{Z}{X})$.
\end{definition}

\begin{remark}\label{rklink}
	The canonical morphisms $\psi_i : Z_i \to Z$ induce an isomorphism
	\[\psi_1^* \oplus \psi_2^* : \CHMW{Z}{i}{j}{\nu_Z} \to \CHMW{Z_1}{i}{j}{\nu_{Z_1}} \oplus \CHMW{Z_2}{i}{j}{\nu_{Z_2}}\]
	which allows us to identify $\CHMW{Z}{i}{j}{\nu_Z}$ with $\CHMW{Z_1}{i}{j}{\nu_{Z_1}} \oplus \CHMW{Z_2}{i}{j}{\nu_{Z_2}}$.
\end{remark}

\begin{definition}[Oriented fundamental class and cycles]\label{defclass}
	Let $\mathscr{L}$ be an oriented link with two components and $i \in \{1,2\}$. The \emph{oriented fundamental class} of the $i$-th component of $\mathscr{L}$, denoted by $[o_i]$, is the unique element of $\CHMW{Z_i}{0}{-1}{\nu_{Z_i}}$ which is sent to $\eta \in \CH{Z_i}{0}{-1}$ by the isomorphism $\CHMW{Z_i}{0}{-1}{\nu_{Z_i}} \to \CH{Z_i}{0}{-1}$ induced by $o_i$ (see Lemma \ref{propll}).
	
	Furthermore, an \emph{oriented fundamental cycle} of the $i$-th component of $\mathscr{L}$ is a representative in $\Cm^0(Z_i,\KMWF{-1}{\nu_{Z_i}})$ of the oriented fundamental class $[o_i]$.
\end{definition}

\begin{remark}
	Note that if $o_i$ and $o'_i$ represent the same orientation class then the isomorphism $\CHMW{Z_i}{0}{-1}{\nu_{Z_i}} \to \CH{Z_i}{0}{-1}$ induced by $o'_i$ is the same as the one induced by $o_i$, hence the oriented fundamental class $[o_i]$ only depends on the orientation class $\overline{o_i}$.
\end{remark}

Recall that the boundary map $\partial : \CH{X \setminus Z}{1}{1} \to \CHMW{Z}{0}{-1}{\nu_Z}$ is an isomorphism (see Definition \ref{defboundarymap} and Theorem \ref{locseq} and note that the groups $\CH{X}{1}{1}$ and $\CH{X}{2}{1}$ vanish (by Proposition \ref{RS})).

\begin{definition}[Seifert class and Seifert divisors]\label{Seifert}
	Let $\mathscr{L}$ be an oriented link with two components. The couple of \emph{Seifert classes} of $\mathscr{L}$ is the couple $(\seifert_{o_1},\seifert_{o_2})$, or $(\seifert_1,\seifert_2)$ for short, of elements of $\CH{X \setminus Z}{1}{1}$ such that $\partial(\seifert_1) = ([o_1],0)$ and $\partial(\seifert_2) = (0,[o_2])$.
	
	For $i \in \{1,2\}$, we call $\seifert_i$ the \emph{Seifert class} of $Z_i$ relative to the link $\mathscr{L}$. Furthermore, a \emph{Seifert divisor} of $Z_i$ relative to the link $\mathscr{L}$ is a representative in $\mathcal{C}^1(X \setminus Z,\KF{1})$ of $\seifert_i$.
\end{definition}

\begin{remark}
	For $i \in \{1,2\}$, the Seifert class of $Z_i$ relative to $\mathscr{L}$ depends on $Z$ and not only on $Z_i$ (and its orientation class $\overline{o_i}$). We could define a weaker notion of Seifert class of $Z_i$, which would only depend on $Z_i$ (and $\overline{o_i}$), but it is important for what follows to have this stronger notion of Seifert class.
\end{remark}

See Definition \ref{defintprod} for the intersection product, which is used in the following definition.

\begin{definition}[Quadratic linking class]\label{defqlc}
	Let $\mathscr{L}$ be an oriented link with two components. The \emph{quadratic linking class} of $\mathscr{L}$, denoted by $\qlc{\mathscr{L}}$, is the image of the intersection product of the Seifert class $\seifert_1$ with the Seifert class $\seifert_2$ by the boundary map $\partial : \CH{X \setminus Z}{2}{2} \to \CHMW{Z}{1}{0}{\nu_Z}$. We denote $\qlc{\mathscr{L}} = (\sigma_{1,\mathscr{L}},\sigma_{2,\mathscr{L}}) \in \CHMW{Z_1}{1}{0}{\nu_{Z_1}} \oplus \CHMW{Z_2}{1}{0}{\nu_{Z_2}}$ (see Remark \ref{rklink}).
\end{definition}

\begin{remark}\label{rkkernel}
	Note that the quadratic linking class $\qlc{\mathscr{L}}$ contains as much information as the intersection product $\seifert_1 \cdot \seifert_2$ since the boundary map $\partial : \CH{X \setminus Z}{2}{2} \to \CHMW{Z}{1}{0}{\nu_Z}$ is injective (see the localization long exact sequence (in Theorem \ref{locseq}) and note that the group $\CH{X}{2}{2}$ vanishes (by Proposition \ref{RS})). Also note that $\qlc{\mathscr{L}} \in \ker(i_*)$ since the image of $\partial$ is the kernel of $i_* : \CHMW{Z}{1}{0}{\nu_Z} \to \CH{X}{3}{2}$, where $i_*$ is the pushforward of the closed immersion $i : Z \to X$ (see the localization long exact sequence (in Theorem \ref{locseq})).
\end{remark}

\begin{notation}
	For $i \in \{1,2\}$, we denote by $\widetilde{o_i}$ the isomorphism $\CHMW{Z_i}{1}{0}{\nu_{Z_i}} \to \CH{Z_i}{1}{0}$ induced by $o_i$ (see Lemma \ref{propll}) and by $\varphi_i^*$ the isomorphism $\CH{Z_i}{1}{0} \to \CH{\As{2}{F}}{1}{0}$ induced by $\varphi_i$.
\end{notation}

Recall that we fixed an isomorphism $\zeta : \CH{\As{2}{F}}{1}{0} \to \W(F)$ (see Subsection \ref{subconv}).

\begin{definition}[Quadratic linking degree]\label{defqld}
	Let $\mathscr{L}$ be an oriented link with two components. The \emph{quadratic linking degree} of $\mathscr{L}$, denoted by $\qld{\mathscr{L}}$, is the image of the quadratic linking class of $\mathscr{L}$ by the isomorphism $(\zeta \oplus \zeta) \circ (\varphi_1^* \oplus \varphi_2^*) \circ (\widetilde{o_1} \oplus \widetilde{o_2}) : \CHMW{Z}{1}{0}{\nu_Z} \to \W(F) \oplus \W(F)$.
\end{definition}

\begin{notation}\label{notinclsubcomp}
	We denote by $(i_1)_*$ the inclusion of the subcomplex $\Cm^{\bullet - 2}(Z_1,\KMWF{0}{\nu_{Z_1}})$ in $\Cm^{\bullet}(X,\KF{2})$ (and the induced morphism in cohomology; see Remark \ref{rkinclsubcomp}) and by $(i_2)_*$ the inclusion of the subcomplex $\Cm^{\bullet - 2}(Z_2,\KMWF{0}{\nu_{Z_2}})$ in $\Cm^{\bullet}(X,\KF{2})$ (and the induced morphism in cohomology; see Remark \ref{rkinclsubcomp}).
\end{notation}

Recall that we fixed an isomorphism $\zeta' : \CH{\As{4}{F}}{3}{2} \to \W(F)$ (see Subsection \ref{subconv}).

\begin{definition}[Ambient quadratic linking degree]\label{defaqld}
	Let $\mathscr{L}$ be an oriented link with two components. The \emph{ambient quadratic linking degree} of $\mathscr{L}$, denoted by $\aqld{\mathscr{L}}$, is the element $\zeta'((i_1)_*(\sigma_{1,\mathscr{L}}))$ of $\W(F)$ (see Definition \ref{defqlc} and Notation \ref{notinclsubcomp}).
\end{definition}

\begin{remark}\label{rkotheraqld}
	Since $\qlc{\mathscr{L}} \in \ker(i_*)$ (see Remark \ref{rkkernel}) and $i_* = (i_1)_* \oplus (i_2)_*$ (as $Z = Z_1 \sqcup Z_2$), we have $(i_2)_*(\sigma_{2,\mathscr{L}}) = -(i_1)_*(\sigma_{1,\mathscr{L}})$ hence $\zeta'((i_2)_*(\sigma_{2,\mathscr{L}})) = -\zeta'((i_1)_*(\sigma_{1,\mathscr{L}})) = - \aqld{\mathscr{L}}$.
\end{remark}

\subsection{Invariants of the quadratic linking degree}\label{subinv}

By construction, the quadratic linking degree depends on choices of orientations and of parametrisations of $\As{2}{F} \to X$ and the ambient quadratic linking degree depends on choices of orientations. In this subsection we determine how these depend on such choices and construct invariants from them. 

Throughout this subsection, $\mathscr{L}$ is an oriented link with two components and we denote $\qld{\mathscr{L}} = (d_1,d_2) \in \W(F) \oplus \W(F)$.

We start by recalling how orientation classes can change.

\begin{lemma}\label{storientation}
	Let $i \in \{1,2\}$ and $o'_i : \nu_{Z_i} \to \Lb'_i \otimes \Lb'_i$ be an orientation of the normal sheaf of $Z_i$ in $X$.
	There exists $a \in F^*$ such that the orientation class of $o'_i$ is the orientation class of $o_i \circ (\times a)$.
\end{lemma}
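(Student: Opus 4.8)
The plan is to exploit the fact that any two invertible $\Os_{Z_i}$-modules that both serve as "square roots" of $\nu_{Z_i}$ in the sense of Definition \ref{orientation} must differ by a torsion line bundle of order dividing $2$, but here the situation is rigid because $Z_i \simeq \As{2}{F}$ has trivial Picard group. First I would record that $\nu_{Z_i}$ is a line bundle on $Z_i$ (the normal sheaf of a codimension-$3$ smooth closed subscheme in a smooth scheme, so its determinant $\nu_{Z_i} = \det(\normalsheaf{Z_i}{X})$ is invertible). Since $Z_i \simeq \As{2}{F} = \Spec(F[u,v]) \setminus \{0\}$, which is an open subscheme of affine space with $\operatorname{Pic}(Z_i) = 0$, both $\Lb_i$ and $\Lb'_i$ are trivial, so we may fix trivialisations $\Lb_i \simeq \Os_{Z_i}$ and $\Lb'_i \simeq \Os_{Z_i}$, and likewise $\nu_{Z_i} \simeq \Os_{Z_i}$. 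Under these identifications $o_i$ and $o'_i$ become automorphisms of $\Os_{Z_i}$, i.e. global units of $Z_i$.

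The second step is to identify the global units: $\Os^*(Z_i) = \Os^*(\As{2}{F})$. Because $\As{2}{F}$ is obtained from $\A^2_F$ by removing a closed subset of codimension $2$, restriction gives $\Os^*(\As{2}{F}) = \Os^*(\A^2_F) = (F[u,v])^* = F^*$. Hence, after the chosen trivialisations, $o_i$ corresponds to some $b \in F^*$ and $o'_i$ corresponds to some $b' \in F^*$. Setting $a := b'/b \in F^*$, the automorphism $o'_i \circ o_i^{-1}$ of $\nu_{Z_i}$ is multiplication by $a$; equivalently $o'_i = (o_i \circ (\times a))$ after adjusting the trivialisation of $\Lb_i'$, and this adjustment is exactly an isomorphism $\psi : \Lb_i \to \Lb'_i$ witnessing that $o'_i$ and $o_i \circ (\times a)$ represent the same orientation class in the sense of Definition \ref{orientation}. (Concretely, $\times a : \nu_{Z_i} \to \nu_{Z_i}$ followed by $o_i : \nu_{Z_i} \to \Lb_i \otimes \Lb_i$ lands in $\Lb_i \otimes \Lb_i$, and the scalar $a$ gets absorbed by rescaling one tensor factor when we pass to $\Lb'_i \otimes \Lb'_i$; one should be mildly careful that rescaling by $a$ on $\Lb_i$ multiplies $\Lb_i \otimes \Lb_i$ by $a^2$, which is why the statement only asserts existence of \emph{some} $a$, not uniqueness.)

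I do not expect a genuine obstacle here; the only point requiring a little care is the bookkeeping of where the scalar $a$ versus $a^2$ appears when translating between "postcomposing $o_i$ with a scalar" and "precomposing with a scalar on $\nu_{Z_i}$", together with the freedom in choosing the isomorphism $\psi$ in the definition of equivalence of orientations. Since $\psi : \Lb_i \to \Lb'_i$ can itself be any unit, this freedom is precisely what makes the class of $a$ well-defined only up to squares — but the lemma as stated merely asserts existence, so this is harmless. The essential input is the triviality of $\operatorname{Pic}(\As{2}{F})$ and the computation $\Os^*(\As{2}{F}) = F^*$, both of which follow from $\As{2}{F}$ being the complement of a codimension-$2$ closed subset in $\A^2_F$.
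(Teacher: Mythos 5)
Your proposal is correct and follows essentially the same route as the paper: trivialise $\Lb_i$ and $\Lb'_i$ using that every invertible sheaf on $Z_i \simeq \As{2}{F}$ is trivial (restriction from the factorial $\A^2_F$), then observe that the resulting discrepancy between $o_i$ and $o'_i$ is an automorphism of $\Os_{Z_i}$, i.e.\ a global unit, and $\Gamma(\As{2}{F},\Os^*)=F^*$. The only cosmetic difference is that the paper transports the comparison to $\Os_{Z_i}$ via the multiplication map $m:\Os_{Z_i}\otimes\Os_{Z_i}\to\Os_{Z_i}$ rather than also trivialising $\nu_{Z_i}$, but this does not change the substance of the argument.
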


\begin{proof}
	Recall that every invertible $\Os_{\A^2_F}$-module is isomorphic to $\Os_{\A^2_F}$ (since $\A^2_F$ is factorial) and that every invertible $\Os_{\As{2}{F}}$-module is the restriction of an invertible $\Os_{\A^2_F}$-module hence every invertible $\Os_{\As{2}{F}}$-module is isomorphic to $\Os_{\As{2}{F}}$. Since $Z_i \simeq \As{2}{F}$, there exist isomorphisms $\psi : \Lb_i \to \Os_{Z_i}$ and $\psi' : \Lb'_i \to \Os_{Z_i}$. From Definition \ref{orientation}, $\overline{(\psi \otimes \psi) \circ o_i} = \overline{o_i}$ and $\overline{(\psi' \otimes \psi') \circ o'_i} = \overline{o'_i}$. Denoting by $m : \Os_{Z_i} \otimes \Os_{Z_i} \to \Os_{Z_i}$ the multiplication, the morphism $m \circ ((\psi' \otimes \psi') \circ o'_i) \circ ((\psi \otimes \psi) \circ o_i)^{-1} \circ m^{-1}$ is an automorphism of $\Os_{Z_i}$ hence is the multiplication by an element of $\Gamma(Z_i,\Os_{Z_i}^*)$, i.e. by an element of $F^*$. The result follows directly.
\end{proof}

Recall that automorphisms of $\As{2}{F}$ are restrictions of automorphisms of $\A^2_F$ which preserve the origin (see \cite[Theorem 6.45 (Hartogs' theorem)]{goertzwedhorn}), hence they induce changes of coordinates of $\A^2_F = \Spec(F[u,v])$. We denote by $J_{\psi}$ the Jacobian determinant of an automorphism $\psi$ of $\As{2}{F}$; note that $J_{\psi}$ is in $F^*$ since $(F[u,v])^* = F^*$.

\begin{lemma}\label{stchanges} 
	Let $a = (a_1,a_2)$ be a couple of elements of $F^*$ and $\psi = (\psi_1,\psi_2)$ be a couple of automorphisms of $\As{2}{F}$.
	\begin{enumerate}
		\item Let $\mathscr{L}_a$ be the link obtained from $\mathscr{L}$ by changing the orientation class $\overline{o_1}$ into $\overline{o_1 \circ (\times a_1)}$ and the orientation class $\overline{o_2}$ into $\overline{o_2 \circ (\times a_2)}$. Then $\qlc{\mathscr{L}_a} = \langle a_1a_2\rangle \qlc{\mathscr{L}}$, $\qld{\mathscr{L}_a} = (\langle a_2\rangle d_1,\langle a_1\rangle d_2)$ and $\aqld{\mathscr{L}_a} = \langle a_1a_2\rangle \aqld{\mathscr{L}}$.
		\item Let $\mathscr{L}_\psi$ be the link obtained from $\mathscr{L}$ by changing $\varphi_1 : \As{2}{F} \to X$ into $\varphi_1 \circ \psi_1$ and $\varphi_2 : \As{2}{F} \to X$ into $\varphi_2 \circ \psi_2$. Then $\qlc{\mathscr{L}_\psi} = \qlc{\mathscr{L}}$, $\qld{\mathscr{L}_\psi} = (\langle J_{\psi_1}\rangle d_1, \langle J_{\psi_2}\rangle d_2)$ and $\aqld{\mathscr{L}_\psi} = \aqld{\mathscr{L}}$.
		\item Let $\mathscr{L}'$ be the link obtained from $\mathscr{L}$ by changing the order of the components. Then $\qlc{\mathscr{L}'} = - \qlc{\mathscr{L}}$, $\qld{\mathscr{L}'} = (-d_2,-d_1)$ and $\aqld{\mathscr{L}'} = \aqld{\mathscr{L}}$.
	\end{enumerate}
\end{lemma}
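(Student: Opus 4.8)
The plan is to treat the three parts separately, tracking how each ingredient of the construction (oriented fundamental classes, Seifert classes, intersection product, boundary maps, and the identifications $\widetilde{o_i}$, $\varphi_i^*$, $\zeta$, $\zeta'$) transforms under the prescribed change. For part (1), I would first analyze how the oriented fundamental class $[o_i]$ changes when $o_i$ is replaced by $o_i \circ (\times a_i)$. Since $[o_i]$ is characterized by being sent to $\eta$ under the isomorphism $\CHMW{Z_i}{0}{-1}{\nu_{Z_i}} \to \CH{Z_i}{0}{-1}$ induced by $o_i$, and replacing $o_i$ by $o_i \circ (\times a_i)$ multiplies that induced isomorphism by $\langle a_i\rangle$ (this is the standard behaviour of the twist in the Rost--Schmid formalism, cf. Lemma \ref{propll}), the new oriented fundamental class is $\langle a_i\rangle [o_i]$ up to the canonical identification of the twisted groups. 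Consequently the Seifert class $\seifert_i$, which is the preimage under the boundary isomorphism $\partial : \CH{X \setminus Z}{1}{1} \to \CHMW{Z}{0}{-1}{\nu_Z}$ of the oriented fundamental class, becomes $\langle a_i\rangle \seifert_i$ (using $\W(F)$-linearity of $\partial$ and of the identification in Remark \ref{rklink}). Then the intersection product $\seifert_1 \cdot \seifert_2$ becomes $\langle a_1\rangle \langle a_2\rangle (\seifert_1 \cdot \seifert_2) = \langle a_1 a_2\rangle (\seifert_1 \cdot \seifert_2)$, whence $\qlc{\mathscr{L}_a} = \langle a_1 a_2\rangle \qlc{\mathscr{L}}$ after applying the $\W(F)$-linear boundary map. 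For the quadratic linking degree, I must be careful: the isomorphism $(\zeta \oplus \zeta) \circ (\varphi_1^* \oplus \varphi_2^*) \circ (\widetilde{o_1} \oplus \widetilde{o_2})$ itself changes, because $\widetilde{o_i}$ is replaced by $\langle a_i\rangle \widetilde{o_i}$ (again the twist behaviour). So on the first component, the class $\sigma_{1,\mathscr{L}}$ lives in $\CHMW{Z_1}{1}{0}{\nu_{Z_1}}$; under $\mathscr{L}_a$ it becomes $\langle a_1 a_2\rangle \sigma_{1,\mathscr{L}}$, but the identifying isomorphism on that component is now $\zeta \circ \varphi_1^* \circ (\langle a_1\rangle\widetilde{o_1})$, so the net effect on $d_1$ is multiplication by $\langle a_1 a_2\rangle \langle a_1\rangle = \langle a_2\rangle$ (since $\langle a_1\rangle^2 = \langle a_1^2\rangle = 1$ in $\W(F)$), giving $\langle a_2\rangle d_1$; symmetrically the second component gives $\langle a_1\rangle d_2$. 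For the ambient quadratic linking degree, $(i_1)_*$ is $\W(F)$-linear and $\zeta'$ does not involve the orientation $o_i$, so $\aqld{\mathscr{L}_a} = \zeta'((i_1)_*(\langle a_1 a_2\rangle \sigma_{1,\mathscr{L}})) = \langle a_1 a_2\rangle \aqld{\mathscr{L}}$.

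For part (2), the key observation is that precomposing $\varphi_i$ with an automorphism $\psi_i$ of $\As{2}{F}$ does not change the image $Z_i$, nor the subscheme $Z$, nor the normal sheaf $\normalsheaf{Z_i}{X}$ or its orientation $o_i$; hence the oriented fundamental classes, the Seifert classes, the intersection product, the quadratic linking class, and the pushforwards $(i_j)_*$ are all literally unchanged. Therefore $\qlc{\mathscr{L}_\psi} = \qlc{\mathscr{L}}$ and, since $\zeta'$ and $(i_1)_*$ are untouched, $\aqld{\mathscr{L}_\psi} = \aqld{\mathscr{L}}$. The only thing that changes is the identification $\varphi_i^* : \CH{Z_i}{1}{0} \to \CH{\As{2}{F}}{1}{0}$, which is replaced by $(\varphi_i \circ \psi_i)^* = \psi_i^* \circ \varphi_i^*$. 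So I need to compute the effect of $\psi_i^*$ on $\CH{\As{2}{F}}{1}{0} \cong \W(F)$, i.e. after identification via $\zeta$. Here I would use the explicit factorization of $\zeta$ through the boundary map $\partial : \CH{\As{2}{F}}{1}{0} \to \CHMW{\{0\}}{0}{-2}{\det(\normalsheaf{\{0\}}{\A^2_F})}$ and the chosen trivialization sending $\overline{u}^* \wedge \overline{v}^*$ to $1 \otimes 1$: an automorphism $\psi_i$ with Jacobian $J_{\psi_i}$ sends $\overline{u}^* \wedge \overline{v}^*$ to $J_{\psi_i} \cdot (\overline{u}^* \wedge \overline{v}^*)$ in $\det(\normalsheaf{\{0\}}{\A^2_F})$, so the compatibility of $\partial$ with pullback together with the twist behaviour yields $\zeta \circ \psi_i^* \circ \zeta^{-1} = \langle J_{\psi_i}\rangle$ on $\W(F)$. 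Hence $d_i$ becomes $\langle J_{\psi_i}\rangle d_i$, as claimed. The one subtlety to verify is the sign/twist convention in matching $\det(\normalsheaf{\{0\}}{\A^2_F})$ with $\Os_{\{0\}} \otimes \Os_{\{0\}}$ across the automorphism, which is a direct functoriality check.

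For part (3), swapping the two components replaces $Z_1 \sqcup Z_2$ by $Z_2 \sqcup Z_1$; the Seifert classes $\seifert_1, \seifert_2$ are the same elements of $\CH{X \setminus Z}{1}{1}$ but now play reversed roles, so the relevant intersection product is $\seifert_2 \cdot \seifert_1$. Using the graded-commutativity of the intersection product in Milnor--Witt $K$-theory on $\CH{X \setminus Z}{1}{1}$, one has $\seifert_2 \cdot \seifert_1 = \langle -1\rangle (\seifert_1 \cdot \seifert_2)$ in $\CH{X \setminus Z}{2}{2}$ — I would cite Definition \ref{defintprod} and the sign rule $[a]\cdot[b] = \langle -1\rangle [b]\cdot[a]$ there. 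Combined with the interchange of the two summands of $\CHMW{Z}{1}{0}{\nu_Z}$ coming from the reordering in Remark \ref{rklink}, this gives $\qlc{\mathscr{L}'} = -\qlc{\mathscr{L}}$ (noting that $\langle -1\rangle \cdot x = -x$ is \emph{not} what is happening here — rather, the $\langle -1\rangle$ twist from graded-commutativity combines with the swap; I should double-check in the computation whether the stated formula uses $-1$ in $\W(F)$ or $\langle -1\rangle$, and reconcile). The components then satisfy $\sigma_{1,\mathscr{L}'} = -\sigma_{2,\mathscr{L}}$ and $\sigma_{2,\mathscr{L}'} = -\sigma_{1,\mathscr{L}}$ under the identification of the normal sheaf determinants (which also swap), giving $\qld{\mathscr{L}'} = (-d_2, -d_1)$ once the identifications $\widetilde{o_i}, \varphi_i^*, \zeta$ are carried along (these just permute). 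Finally, for the ambient quadratic linking degree, $\aqld{\mathscr{L}'} = \zeta'((i_1)_*(\sigma_{1,\mathscr{L}'}))$ where now ``$(i_1)_*$'' for $\mathscr{L}'$ is the old $(i_2)_*$, and $\sigma_{1,\mathscr{L}'} = -\sigma_{2,\mathscr{L}}$, so $\aqld{\mathscr{L}'} = \zeta'((i_2)_*(-\sigma_{2,\mathscr{L}})) = -\zeta'((i_2)_*(\sigma_{2,\mathscr{L}})) = \aqld{\mathscr{L}}$ by Remark \ref{rkotheraqld}.

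The main obstacle I anticipate is bookkeeping rather than conceptual: precisely tracking the twist $\langle a\rangle$-factors and the graded-commutativity sign through the several non-canonical identifications ($\widetilde{o_i}$, $\varphi_i^*$, $\zeta$, $\zeta'$, and the decomposition in Remark \ref{rklink}), making sure the $\langle a_i\rangle^2 = 1$ cancellations are applied in the right places so that the asymmetric-looking formula $\qld{\mathscr{L}_a} = (\langle a_2\rangle d_1, \langle a_1\rangle d_2)$ comes out correctly, and reconciling whether the signs in part (3) are genuine minus signs in $\W(F)$ or the unit form $\langle -1\rangle$ (in $\W(F)$ these are equal, so the statement is consistent, but the proof should be explicit about which identity is being invoked at each step).
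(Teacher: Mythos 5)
Your proposal is correct and follows essentially the same route as the paper: $\K{F}{0}$-linearity of the boundary maps and of the intersection product for part (1) with the $\langle a_i\rangle^2=1$ cancellation against the changed identification $\widetilde{o_i}$, the Jacobian computation on $\det(\normalsheaf{\{0\}}{\A^2_F})$ together with the commutation of $\psi_i^*$ with $\partial$ for part (2), and $\langle -1\rangle$-commutativity of the product of divisor classes plus Remark \ref{rkotheraqld} for part (3). The only point to settle in your part (3) hedge: the paper does use $\langle -1\rangle\alpha=-\alpha$ directly, which is valid here because $\qlc{\mathscr{L}}$ lives in groups built from $\K{\kappa(q)}{-1}$, where every element is a multiple of $\eta$ and $\eta\langle -1\rangle=-\eta$.
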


\begin{proof}
	
	(1) Note that for all $i \in \{1,2\}$, $[o_i \circ (\times a_i)] = \langle a_i^{-1}\rangle [o_i] = \langle a_i\rangle [o_i]$ hence, by Proposition \ref{propresidue} and Proposition \ref{intscalar}:
	\begin{align*}
	\seifert_{o_1 \circ (\times a_1)} = \langle a_1\rangle \seifert_{o_1} & \text{ and } \seifert_{o_2 \circ (\times a_2)} = \langle a_2\rangle \seifert_{o_2}\\
	\seifert_{o_1 \circ (\times a_1)} \cdot \seifert_{o_2 \circ (\times a_2)} & \ = \ \langle a_1a_2\rangle \seifert_{o_1} \cdot \seifert_{o_2} \\
	\partial(\seifert_{o_1 \circ (\times a_1)} \cdot \seifert_{o_2 \circ (\times a_2)}) & \ = \ \langle a_1a_2\rangle \partial(\seifert_{o_1} \cdot \seifert_{o_2}) \\
	\qlc{\mathscr{L}_a} & \ = \ \langle a_1a_2\rangle \qlc{\mathscr{L}}
	\end{align*}
	Note that $\widetilde{o_1 \circ (\times a_1)}(\langle a_1a_2\rangle \sigma_{1,\mathscr{L}}) = \langle a_1\rangle \widetilde{o_1}(\langle a_1a_2\rangle \sigma_{1,\mathscr{L}}) = \langle a_1^2a_2\rangle \widetilde{o_1}(\sigma_{1,\mathscr{L}}) = \langle a_2\rangle \widetilde{o_1}(\sigma_{1,\mathscr{L}})$ and similarly $\widetilde{o_2 \circ (\times a_2)}(\langle a_1a_2\rangle \sigma_{2,\mathscr{L}}) = \langle a_1\rangle \widetilde{o_2}(\sigma_{2,\mathscr{L}})$. It follows that $\qld{\mathscr{L}_a} = (\langle a_2\rangle d_1,\langle a_1\rangle d_2)$.
	
	The equality $\aqld{\mathscr{L}_a} = \langle a_1a_2\rangle \aqld{\mathscr{L}}$ follows from the equality $\qlc{\mathscr{L}_a} = \langle a_1a_2\rangle \qlc{\mathscr{L}}$. Indeed, $\aqld{\mathscr{L}_a}$ is by definition equal to $\zeta'((i_1)_*(\sigma_{1,\mathscr{L}_a}))$, hence is equal to $\zeta'((i_1)_*(\langle a_1a_2\rangle \sigma_{1,\mathscr{L}}))$. The result follows from the fact that $(i_1)_*$ (which is the inclusion of a subcomplex; see Notation \ref{notinclsubcomp}) and $\zeta'$ commute with product by $\langle a_1a_2\rangle$ (see Proposition \ref{propresidue} and Definition \ref{convisobis}).
	
	(2) From the definitions, $\qlc{\mathscr{L}_\psi} = \qlc{\mathscr{L}}$ and $(\widetilde{o_1} \oplus \widetilde{o_2})(\qlc{\mathscr{L}_\psi}) = (\widetilde{o_1} \oplus \widetilde{o_2})(\qlc{\mathscr{L}})$. Let $i \in \{1,2\}$. We denote by $\psi_i^* : \CH{\As{2}{F}}{1}{0} \to \CH{\As{2}{F}}{1}{0}$ the isomorphism induced by $\psi_i$. Note that $(\varphi_i \circ \psi_i)^*(\widetilde{o_i}(\sigma_{i,\mathscr{L}})) = \psi_i^*(\varphi_i^*(\widetilde{o_i}(\sigma_{i,\mathscr{L}})))$ and that the following diagram is commutative:
	\[\xymatrix{\CH{\As{2}{F}}{1}{0} \ar[r]^-\partial \ar[d]_-{\psi_i^*} & \CHMW{\{0\}}{0}{-2}{\det(\normalsheaf{\overline{\{0\}}}{\A^2_F})} \ar[d]^-{\psi_i^*} \\ \CH{\As{2}{F}}{1}{0} \ar[r]_-\partial & \CHMW{\{0\}}{0}{-2}{\det(\normalsheaf{\overline{\{0\}}}{\A^2_F})}}\]
	Hence $\partial((\varphi_i \circ \psi_i)^*(\widetilde{o_i}(\sigma_{i,\mathscr{L}}))) = \psi_i^*(\partial(\varphi_i^*(\widetilde{o_i}(\sigma_{i,\mathscr{L}}))))$.
	
	Finally note that for all $\alpha \in \K{F}{-2}$, $\psi_i^*(\alpha \otimes (\overline{u}^* \wedge \overline{v}^*)) = \langle J_{\psi_i}\rangle \alpha \otimes (\overline{u}^* \wedge \overline{v}^*)$. It follows from Definition \ref{conviso} that $\qld{\mathscr{L}_\psi} = (\langle J_{\psi_1}\rangle d_1, \langle J_{\psi_2}\rangle d_2)$. As for $\aqld{\mathscr{L}_\psi}$, it is clearly equal to $\aqld{\mathscr{L}}$ since $\varphi_1$ and $\varphi_2$ are not used in its definition (nor in the definition of the quadratic linking class).
	
	(3) By Proposition \ref{comintprod}, $\seifert_2 \cdot \seifert_1 = \langle -1\rangle (\seifert_1 \cdot \seifert_2)$ hence by Proposition \ref{propresidue}, $\partial(\seifert_2 \cdot \seifert_1) = \langle -1\rangle \partial(\seifert_1 \cdot \seifert_2)$, thus $\qlc{\mathscr{L}'} = \langle -1\rangle \qlc{\mathscr{L}} = - \qlc{\mathscr{L}}$ (since $\qlc{\mathscr{L}} \in \CHMW{Z}{1}{0}{\nu_Z}$ and for every field $k$ and $\alpha \in \K{k}{-1}$, $\langle -1\rangle \alpha = - \alpha$). It follows that $\qld{\mathscr{L}'} = (-d_2,-d_1)$ and that $\aqld{\mathscr{L}'} = \aqld{\mathscr{L}}$ (as $\zeta'((i_2)_*(\sigma_{2,\mathscr{L}})) = -\zeta'((i_1)_*(\sigma_{1,\mathscr{L}}))$ by Remark \ref{rkotheraqld}).
\end{proof}

We have proved in particular that the ambient quadratic linking degree and each component of the quadratic linking degree are each only multiplied by an $\langle a\rangle \in \W(F)$ with $a \in F^*$ when the orientation classes or the  parametrisations are changed, so that we get the following invariants.

\begin{corollary}\label{strankmod2}
	The rank modulo $2$ of $d_1$, the rank modulo $2$ of $d_2$ and the rank modulo $2$ of $\aqld{\mathscr{L}}$ are invariant under changes of orientation classes $\overline{o_1},\overline{o_2}$ and under changes of parametrisations of $\varphi_1, \varphi_2 : \As{2}{F} \to X$.
\end{corollary}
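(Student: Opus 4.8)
The plan is to deduce this immediately from Lemma \ref{stchanges}. First I would recall that, by Lemma \ref{storientation}, any change of the orientation class $\overline{o_i}$ replaces it by $\overline{o_i \circ (\times a_i)}$ for some $a_i \in F^*$, and that any change of parametrisation replaces $\varphi_i$ by $\varphi_i \circ \psi_i$ for some automorphism $\psi_i$ of $\As{2}{F}$, with Jacobian determinant $J_{\psi_i} \in F^*$. Consequently, by parts (1) and (2) of Lemma \ref{stchanges}, any composite of such changes replaces $d_1$ by $\langle b_1\rangle d_1$, $d_2$ by $\langle b_2\rangle d_2$ and $\aqld{\mathscr{L}}$ by $\langle b_3\rangle \aqld{\mathscr{L}}$ for suitable $b_1,b_2,b_3 \in F^*$.

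Then I would invoke the rank homomorphism. The rank modulo $2$ defines a group homomorphism $\overline{\mathrm{rk}} : \W(F) \to \Z/2\Z$; this is well defined because it is the reduction modulo $2$ of the rank homomorphism $\GW(F) \to \Z$, which descends to $\W(F)$ since hyperbolic forms have even rank (equivalently, it is the projection $\W(F) \to \W(F)/I(F) \simeq \Z/2\Z$ onto the quotient by the fundamental ideal). Since $\langle b\rangle$ is a rank-one form, multiplicativity of the rank of diagonal forms gives $\overline{\mathrm{rk}}(\langle b\rangle d) = \overline{\mathrm{rk}}(\langle b\rangle)\,\overline{\mathrm{rk}}(d) = \overline{\mathrm{rk}}(d)$ for every $b \in F^*$ and every $d \in \W(F)$.

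Applying this to $d_1$, $d_2$ and $\aqld{\mathscr{L}}$ with $b = b_1, b_2, b_3$ respectively shows that their ranks modulo $2$ are unchanged, which is the assertion. There is essentially no obstacle here: the only point meriting a word of care is the well-definedness of the rank modulo $2$ on the Witt group and the resulting triviality of the action of multiplication by any $\langle b\rangle$ on it, which I would treat as standard.
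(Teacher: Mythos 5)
Your proposal is correct and follows essentially the same route as the paper: both deduce the result from Lemma \ref{stchanges} together with the observation that the rank modulo $2$ on $\W(F)$ is unchanged under multiplication by any $\langle b\rangle$. Your additional remarks on the well-definedness of the rank modulo $2$ on the Witt group are a harmless elaboration of what the paper treats as standard.
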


\begin{proof}
	For all $a \in F^*$, the rank modulo $2$ of an element of the Witt ring $\W(F)$ is invariant under the multiplication by $\langle a\rangle$. The result follows directly from Lemma \ref{stchanges}.
\end{proof}

Recall that $\W(\R) \simeq \Z$ (via the signature).

\begin{corollary}\label{stinvR}
	If $F = \R$ then the absolute value of $d_1$, the absolute value of $d_2$ and the absolute value of $\aqld{\mathscr{L}}$ are invariant under changes of orientation classes $\overline{o_1},\overline{o_2}$ and under changes of parametrisations of $\varphi_1, \varphi_2 : \As{2}{\R} \to X$.
\end{corollary}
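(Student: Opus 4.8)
The plan is to deduce this immediately from Lemma \ref{stchanges} together with the explicit description of $\W(\R)$. Recall that the signature is a ring isomorphism $\W(\R) \to \Z$ under which $\langle a\rangle$ is sent to $+1$ when $a > 0$ and to $-1$ when $a < 0$; in particular, for every $a \in \R^*$, multiplication by $\langle a\rangle$ on $\W(\R) \simeq \Z$ is either the identity or multiplication by $-1$.

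First I would record the effect of an arbitrary change of orientation classes and of parametrisations. By Lemma \ref{storientation}, any new orientation class of $\normalsheaf{Z_i}{X}$ has the form $\overline{o_i \circ (\times a_i)}$ for some $a_i \in \R^*$, and any automorphism $\psi_i$ of $\As{2}{\R}$ has Jacobian determinant $J_{\psi_i} \in \R^*$. Applying parts (1) and (2) of Lemma \ref{stchanges} successively (and using $\langle x\rangle\langle y\rangle = \langle xy\rangle$ in $\W(\R)$), after such a change the quadratic linking degree $(d_1,d_2)$ becomes $(\langle a_2 J_{\psi_1}\rangle d_1,\langle a_1 J_{\psi_2}\rangle d_2)$ and the ambient quadratic linking degree $\aqld{\mathscr{L}}$ becomes $\langle a_1 a_2\rangle \aqld{\mathscr{L}}$. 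Thus each of the three quantities is multiplied by some $\langle b\rangle$ with $b \in \R^*$.

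It then remains to observe that, under the signature isomorphism $\W(\R) \simeq \Z$, each of $\langle a_2 J_{\psi_1}\rangle d_1$, $\langle a_1 J_{\psi_2}\rangle d_2$ and $\langle a_1 a_2\rangle \aqld{\mathscr{L}}$ equals either the original integer or its opposite, so that the absolute values of $d_1$, $d_2$ and $\aqld{\mathscr{L}}$ are unchanged. There is essentially no obstacle here: the only point needing a trivial verification is that a general change of orientation classes and parametrisations is a composite of the elementary changes treated in Lemma \ref{stchanges}, so that the successive scalars combine into a single $\langle b\rangle$, $b \in \R^*$; the conclusion is then immediate since the sign of $b$ lies in $\{+1,-1\}$.
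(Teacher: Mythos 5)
Your proof is correct and follows essentially the same route as the paper: the paper likewise observes that every $\langle a\rangle$ with $a \in \R^*$ equals $\pm 1$ in $\W(\R)$ (since every nonzero real is a square or the negative of a square) and then invokes Lemma \ref{stchanges}. Your version merely spells out the composition of the elementary changes from parts (1) and (2) a bit more explicitly, which the paper leaves implicit.
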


\begin{proof}
	For all $a \in \R^*$, $\langle a\rangle = \langle 1\rangle = 1 \in \W(\R)$ or $\langle a\rangle = \langle -1\rangle = -1 \in \W(\R)$ since every real number is a square or the opposite of a square. The result follows directly from Lemma \ref{stchanges}.
\end{proof}

In the family of examples \ref{extorus}, we provide for each positive integer $n$ an example of an oriented link over $\R$ whose ambient quadratic linking degree has absolute value $n$ (and the same is true of each component of the quadratic linking degree); for examples of oriented links over $\R$ whose (ambient) quadratic linking degree is $0$, see Examples \ref{exbinary} (with $a < 0$ in these examples).

The following Lemma-Definition is an inductive definition. For each $d \in \W(F)$, with $k$ ranging over the nonnegative even integers, we define an abelian group $Q_{d,k}$ and an element $\disp \Sigma_k(d) \in Q_{d,k}$. In Theorem \ref{stseriesofinvariants} we will see that $\Sigma_k(d_1)$, $\Sigma_k(d_2)$ and $\Sigma_k(\aqld{\mathscr{L}})$ are invariants for even $k > 0$ (this is also true for $k = 0$ but uninteresting, as $\Sigma_0$ is a constant map on $\W(F)$ which we only defined for convenience); the assumption that $k$ is even is important for Theorem \ref{stseriesofinvariants}.

\begin{lemmadefinition}\label{stinvwd}
	Let $d \in \W(F)$. There exists a unique sequence of abelian groups $Q_{d,k}$ and of elements $\Sigma_k(d) \in Q_{d,k}$, where $k$ ranges over the nonnegative even integers, such that:
	\begin{itemize}
		\item $Q_{d,0} = \W(F)$ and $\Sigma_0(d) = 1 \in Q_{d,0}$;
		\item for each positive even integer $k$, $Q_{d,k}$ is the quotient group $Q_{d,k-2}/(\Sigma_{k-2}(d))$;
		\item for each positive even integer $k$, as soon as $n \in \N_0$ and $a_1,\dots,a_n \in F^*$ verify that $\disp \sum_{i=1}^n \langle a_i\rangle = d \in \W(F)$, we have $\disp \Sigma_k(d) = \sum_{1 \leq i_1 < \dots < i_k \leq n} \langle \prod_{1 \leq j \leq k} a_{i_j}\rangle \in Q_{d,k}$.
	\end{itemize}
\end{lemmadefinition}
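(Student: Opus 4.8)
The plan is to construct the groups $Q_{d,k}$ and the elements $\Sigma_k(d)$ by the prescribed recursion, and then prove by induction on $k$ that (i) the group $Q_{d,k}$ is well-defined (it depends only on $d$, not on auxiliary data), (ii) the formula $\Sigma_k(d) = \sum_{1 \leq i_1 < \dots < i_k \leq n} \langle \prod_j a_{i_j}\rangle$ gives a well-defined element of $Q_{d,k}$ independent of the chosen presentation $d = \sum_{i=1}^n \langle a_i\rangle$, and (iii) uniqueness. The existence of \emph{some} presentation is automatic: every element of $\W(F)$ is represented by a symmetric bilinear form, hence diagonalizable as $\langle a_1\rangle + \dots + \langle a_n\rangle$. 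Points (i) and (iii) are formal once (ii) is established: $Q_{d,k} = Q_{d,k-2}/(\Sigma_{k-2}(d))$ makes sense as soon as $\Sigma_{k-2}(d)$ is a well-defined element of $Q_{d,k-2}$, and the recursion leaves no freedom in the choice of either the groups or the elements.

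So the heart of the matter is the well-definedness in (ii), and the main obstacle is exactly this: I need to show the elementary-symmetric-polynomial expression in the $\langle a_i\rangle$ does not change, modulo the ideal generated by $\Sigma_{k-2}(d)$, when I pass between two diagonalizations of the same Witt class. By Witt's theorem (or rather the classification of quadratic forms over a field), any two diagonal presentations of the same element of $\W(F)$ are connected by a finite chain of \emph{chain equivalences}, i.e. elementary moves of three types: (a) reordering the $a_i$ (to which $\Sigma_k$ is manifestly insensitive, being a symmetric expression); (b) replacing some $\langle a_i\rangle$ by $\langle a_i b^2\rangle$ for $b \in F^*$ (harmless since $\langle a_i b^2\rangle = \langle a_i\rangle$ already in $\W(F)$, hence in any quotient); (c) the hyperbolic cancellation move, replacing a pair $\langle a\rangle + \langle -a\rangle$ by the empty sum (or adding such a pair); and — what reduces everything to these — the move replacing $\langle a\rangle + \langle b\rangle$ by $\langle a+b\rangle + \langle (a+b)ab\rangle$ when $a+b \neq 0$. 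I would verify the invariance of $\Sigma_k(d)$ under each such move separately. Moves (a) and (b) are trivial. For the two-term substitution move, I would expand $\Sigma_k$ by splitting the index set according to whether it meets $\{a,b\}$ (resp. $\{a+b, (a+b)ab\}$) in $0$, $1$, or $2$ elements, and check the resulting identity in $\W(F)$ using only $\langle c\rangle^2 = 1$ (wait: that's false; rather $\langle c\rangle\langle c'\rangle = \langle cc'\rangle$ and $\langle c^2\rangle = 1$) together with $\langle a\rangle + \langle b\rangle = \langle a+b\rangle + \langle ab(a+b)\rangle$ itself, applied to the relevant sub-sums; this already holds in $\W(F)$, so no quotient is needed here.

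The genuinely delicate case is the hyperbolic move (c): if I enlarge the presentation from $(a_1,\dots,a_n)$ to $(a_1,\dots,a_n,a,-a)$ for some $a \in F^*$, the value of $\Sigma_k$ changes, and I must show the change lies in the ideal $(\Sigma_{k-2}(d))$ of $Q_{d,k-2}$ — which is \emph{precisely} why the groups are defined as those quotients and why $k$ must be even. Concretely, writing $e_k$ for the $k$-th elementary symmetric function of $\langle a_1\rangle,\dots,\langle a_n\rangle$ in $\W(F)$, one computes that the $k$-th elementary symmetric function of the enlarged list equals $e_k + (\langle a\rangle + \langle -a\rangle) e_{k-1} + \langle a\rangle\langle -a\rangle e_{k-2} = e_k + (\langle a\rangle + \langle -a\rangle) e_{k-1} + \langle -1\rangle e_{k-2}$. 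Now $\langle a\rangle + \langle -a\rangle = 0$ in $\W(F)$ (this is the defining hyperbolic relation), so this collapses to $e_k + \langle -1\rangle e_{k-2}$; and since $k$ is even, $\langle -1\rangle e_{k-2} \equiv e_{k-2}$ would be too strong, so instead I note $e_{k-2}$ here must be compared with $\Sigma_{k-2}(d) = e_{k-2}$ — the difference $\langle -1\rangle e_{k-2} - 0$ lands in the ideal generated by $e_{k-2} = \Sigma_{k-2}(d)$ because $\langle -1\rangle e_{k-2}$ is a $\W(F)$-multiple of $\Sigma_{k-2}(d)$. Thus modulo $(\Sigma_{k-2}(d))$ the value is unchanged, which is exactly what the recursive quotient is designed to absorb. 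I would organize the write-up by first recording the elementary-symmetric-function identities, then treating moves (a),(b), then the substitution move, and finally move (c) with the computation above, invoking the inductive hypothesis that $\Sigma_{k-2}(d) \in Q_{d,k-2}$ is already well-defined so that the quotient $Q_{d,k}$ and the class of $e_k$ in it are meaningful; uniqueness then follows because at each even stage both the group and the element are forced by the stated conditions.
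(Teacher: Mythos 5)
Your proposal is correct and follows essentially the same route as the paper: the paper verifies invariance of the elementary symmetric expression relation by relation, using the presentation of $\W(F)$ by generators $\langle a\rangle$ and the relations $\langle ab^2\rangle = \langle a\rangle$, $\langle a\rangle+\langle b\rangle = \langle a+b\rangle+\langle (a+b)ab\rangle$ and $\langle 1\rangle+\langle -1\rangle = 0$, which are exactly your moves (b), the substitution move, and (c), with the same computations --- in particular the hyperbolic move producing the extra term $\langle -1\rangle e_{k-2}=-e_{k-2}$, absorbed precisely by the quotient by $(\Sigma_{k-2}(d))$. The only cosmetic differences are that the paper quotients by the \emph{subgroup} generated by $\Sigma_{k-2}(d)$ rather than an ideal (your $\langle -1\rangle e_{k-2}$ lies in it because it equals the $\Z$-multiple $-e_{k-2}$), and that the evenness of $k$ is not actually needed for well-definedness, only later for Theorem \ref{stseriesofinvariants}.
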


\begin{remark}
	The uniqueness in the previous statement is clear, whereas the existence requires work (which is done below) since in $\W(F)$ the equality $\sum_{i=1}^n \langle a_i\rangle = \sum_{j=1}^m \langle b_j\rangle$ does not imply that the $a_i$ are equal to the $b_j$. Moreover, this equality does not imply that $\sum_{1 \leq i_1 < \dots < i_k \leq n} \langle \prod_{1 \leq j \leq k} a_{i_j}\rangle$ is equal to $\sum_{1 \leq p_1 < \dots < p_k \leq m} \langle \prod_{1 \leq q \leq k} b_{p_q}\rangle$ in $\W(F)$, which is why we need the abelian groups $Q_{d,k}$.
\end{remark}

\begin{proof}
	Recall the following presentation of the abelian group $\W(F)$: its generators are the $\langle a \rangle$ for $a \in F^*$ and its relations are the following:
	\begin{enumerate}
		\item $\langle ab^2\rangle = \langle a\rangle$ for all $a,b \in F^*$;
		\item $\langle a\rangle + \langle b\rangle = \langle a+b\rangle + \langle (a+b)ab\rangle$ for all $a,b \in F^*$ such that $a+b \neq 0$;
		\item $\langle 1\rangle + \langle -1\rangle = 0$.
	\end{enumerate} 
	We denote by $G$ the free abelian group of generators the $\langle a \rangle$ for $a \in F^*$, by $G_1$ the quotient of $G$ by the first relation above and by $G_2$ the quotient of $G_1$ by the second relation above, so that $\W(F)$ is the quotient of $G_2$ by the third relation above. 
	
	Let $k$ be a nonnegative even integer such that for all nonnegative even integers $l < k$, $Q_{d,l}$ is an abelian group and $\Sigma_l(d) \in Q_{d,l}$ which verify the conditions of the statement. Note that the quotient of the abelian group $Q_{d,k-2}$ by its subgroup $(\Sigma_{k-2}(d))$ is well-defined, so we can fix $Q_{d,k} = Q_{d,k-2}/(\Sigma_{k-2}(d))$. To show that $\Sigma_k(d)$ is well-defined (by the formula given in the statement), we proceed in four steps, in which we consider a representative of $d$ in $G$, in $G_1$, in $G_2$ and in $\W(F)$ (i.e. $d$ itself) respectively. Let $n \in \N_0$ and $a_1,\dots,a_n \in F^*$ be such that $\sum_{i=1}^n \langle a_i\rangle$ is a representative of $d$ in $G$. 
	
	\textbf{First step:} By definition of $G$, $\disp \sum_{1 \leq i_1 < \dots < i_k \leq n} \langle \prod_{1 \leq j \leq k} a_{i_j}\rangle$ is well-defined in $G$ hence in $Q_{d,k}$ (since $Q_{d,k}$ is obtained from $G$ by quotienting several times).
	
	\textbf{Second step:} For all $b \in F^*$, $\disp \sum_{2 \leq i_2 < \dots < i_k \leq n} \langle a_1 b^2 \prod_{2 \leq j \leq k} a_{i_j}\rangle \ \ + \sum_{2 \leq i_1 < \dots < i_k \leq n} \langle \prod_{1 \leq j \leq k} a_{i_j}\rangle = \sum_{1 \leq i_1 < \dots < i_k \leq n} \langle \prod_{1 \leq j \leq k} a_{i_j}\rangle$ in $G_1$ hence in $Q_{d,k}$ (since $Q_{d,k}$ is obtained from $G_1$ by quotienting several times) and similarly for other indices. Thus $\disp \sum_{1 \leq i_1 < \dots < i_k \leq n} \langle \prod_{1 \leq j \leq k} a_{i_j}\rangle \in Q_{d,k}$ only depends on the class of $\disp \sum_{i=1}^n \langle a_i\rangle$ in $G_1$.
	
	\textbf{Third step:} If $a_1 + a_2 \neq 0$ then in $G_2$:
	\begin{align*}
	& \sum_{3 \leq i_3 < \dots < i_k \leq n} \langle (a_1+a_2)^2 a_1 a_2 \prod_{3 \leq j \leq k} a_{i_j}\rangle + \sum_{3 \leq i_1 < \dots < i_k \leq n} \langle \prod_{1 \leq j \leq k} a_{i_j}\rangle \\
	& + \sum_{3 \leq i_2 < \dots < i_k \leq n} \langle (a_1+a_2) \prod_{2 \leq j \leq k} a_{i_j}\rangle + \sum_{3 \leq i_2 < \dots < i_k \leq n} \langle (a_1+a_2) a_1 a_2 \prod_{2 \leq j \leq k} a_{i_j}\rangle \\
	& = \sum_{3 \leq i_3 < \dots < i_k \leq n} \langle a_1 a_2 \prod_{3 \leq j \leq k} a_{i_j}\rangle + \sum_{3 \leq i_1 < \dots < i_k \leq n} \langle \prod_{1 \leq j \leq k} a_{i_j}\rangle \\
	& + (\langle a_1+a_2\rangle + \langle (a_1+a_2) a_1 a_2\rangle) \sum_{3 \leq i_2 < \dots < i_k \leq n} \langle \prod_{2 \leq j \leq k} a_{i_j}\rangle \\
	& = \sum_{3 \leq i_3 < \dots < i_k \leq n} \langle a_1 a_2 \prod_{3 \leq j \leq k} a_{i_j}\rangle + \sum_{3 \leq i_1 < \dots < i_k \leq n} \langle \prod_{1 \leq j \leq k} a_{i_j}\rangle \\
	& + (\langle a_1\rangle + \langle a_2\rangle) \sum_{3 \leq i_2 < \dots < i_k \leq n} \langle \prod_{2 \leq j \leq k} a_{i_j}\rangle \\
	& = \sum_{1 \leq i_1 < \dots < i_k \leq n} \langle \prod_{1 \leq j \leq k} a_{i_j}\rangle
	\end{align*}
	and similarly for other indices. This is true in $Q_{d,k}$ since $Q_{d,k}$ is obtained from $G_2$ by quotienting several times. Thus
	$\disp \sum_{1 \leq i_1 < \dots < i_k \leq n} \langle \prod_{1 \leq j \leq k} a_{i_j}\rangle \in Q_{d,k}$ only depends on the class of $\disp \sum_{i=1}^n \langle a_i\rangle$ in $G_2$.

	\textbf{Fourth step:} With the convention that $\disp \sum_{1 \leq i_3 < \dots < i_2 \leq n}  \langle \prod_{3 \leq j \leq 2} a_{i_j}\rangle = 1$, note that
	\[\sum_{1 \leq i_1 < \dots < i_k \leq n} \langle \prod_{1 \leq j \leq k} a_{i_j}\rangle + (\langle 1\rangle + \langle -1\rangle) \sum_{1 \leq i_2 < \dots < i_k \leq n} \langle \prod_{2 \leq j \leq k} a_{i_j}\rangle + \langle -1\rangle \sum_{1 \leq i_3 < \dots < i_k \leq n} \langle \prod_{3 \leq j \leq k} a_{i_j}\rangle\]
	is equal to $\disp \sum_{1 \leq i_1 < \dots < i_k \leq n} \langle \prod_{1 \leq j \leq k} a_{i_j}\rangle \ \ - \sum_{1 \leq i_3 < \dots < i_k \leq n} \langle \prod_{3 \leq j \leq k} a_{i_j}\rangle$ in $\W(F)$ hence in $Q_{d,k}$ (since $Q_{d,k}$ is obtained from $\W(F)$ by quotienting several times). Since $\disp \sum_{1 \leq i_3 < \dots < i_k \leq n} \langle \prod_{3 \leq j \leq k} a_{i_j}\rangle = \Sigma_{k-2}(d) = 0$ in $Q_{d,k}$ (by definition of $Q_{d,k}$), $\disp \sum_{1 \leq i_1 < \dots < i_k \leq n} \langle \prod_{1 \leq j \leq k} a_{i_j}\rangle \in Q_{d,k}$ only depends on the class of $\disp \sum_{i=1}^n \langle a_i\rangle$ in $\W(F)$, i.e. on $d$. Thus we can fix $\disp \Sigma_k(d) = \sum_{1 \leq i_1 < \dots < i_k \leq n} \langle \prod_{1 \leq j \leq k} a_{i_j}\rangle \in Q_{d,k}$.
\end{proof}

It follows from Lemma-Definition \ref{stinvwd} that we have for each positive even integer $k$ a map $\Sigma_k : \W(F) \to \bigcup_{d \in \W(F)} Q_{d,k}$ which verifies that for all $d \in \W(F)$, $\Sigma_k(d) \in Q_{d,k}$. How interesting the map $\Sigma_2 : \W(F) \to \W(F)/(1)$ is depends on the ground field $F$. For instance, if $F = \R$ then the map $\Sigma_2 : \W(\R) \simeq \Z \to \W(\R)/(1) = {0}$ is uninteresting since it is constant (but in the real case, Corollary \ref{stinvR} already provides the best possible invariant) whereas if $F = \Q$ then the map $\Sigma_2 : \W(\Q) \simeq \W(\R) \oplus \bigoplus_{p \in P} \W(\Z/p\Z) \to \W(\Q)/(1) \simeq \bigoplus_{p \in P} \W(\Z/p\Z)$ (with $P$ the set of prime numbers) is very interesting (see the discussion at the end of Examples \ref{exbinary}). This provides new invariants of the (ambient) quadratic linking degree: given two oriented links $\mathscr{L}_1$ and $\mathscr{L}_2$, one first compares $\Sigma_2(\aqld{\mathscr{L}_1}) \in \W(F)/(1)$ with $\Sigma_2(\aqld{\mathscr{L}_2}) \in \W(F)/(1)$; if this was not enough to distinguish $\mathscr{L}_1$ from $\mathscr{L}_2$, i.e. if $\Sigma := \Sigma_2(\aqld{\mathscr{L}_1}) = \Sigma_2(\aqld{\mathscr{L}_2})$, then one compares $\Sigma_4(\aqld{\mathscr{L}_1}) \in (\W(F)/(1))/(\Sigma)$ with $\Sigma_4(\aqld{\mathscr{L}_2}) \in (\W(F)/(1))/(\Sigma)$, and so on.

\begin{theorem}\label{stseriesofinvariants}
	Let $\mathscr{L}$ be an oriented link with two components and $k$ be a positive even integer. We denote the quadratic linking degree of $\mathscr{L}$ by $\Qld_{\mathscr{L}} = (d_1,d_2) \in \W(F) \oplus \W(F)$. Then $\Sigma_k(d_1)$, $\Sigma_k(d_2)$ and $\Sigma_k(\aqld{\mathscr{L}})$ are invariant under changes of orientation classes $\overline{o_1},\overline{o_2}$ and under changes of parametrisations of $\varphi_1, \varphi_2 : \As{2}{F} \to X$.
\end{theorem}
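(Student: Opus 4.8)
The plan is to combine Lemma~\ref{stchanges} with the behaviour of the maps $\Sigma_k$ under multiplication by a one-dimensional form $\langle a\rangle$, $a \in F^*$. By Lemma~\ref{stchanges}, under a change of orientation classes the pair $(d_1,d_2)$ is replaced by $(\langle a_2\rangle d_1, \langle a_1\rangle d_2)$ and $\aqld{\mathscr{L}}$ is replaced by $\langle a_1 a_2\rangle \aqld{\mathscr{L}}$, while under a change of parametrisations $(d_1,d_2)$ becomes $(\langle J_{\psi_1}\rangle d_1, \langle J_{\psi_2}\rangle d_2)$ and $\aqld{\mathscr{L}}$ is unchanged; reordering the components replaces $(d_1,d_2)$ by $(-d_2,-d_1)$ and fixes $\aqld{\mathscr{L}}$. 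Hence it suffices to prove two facts: first, that for every $d \in \W(F)$ and every $a \in F^*$ one has a canonical identification $Q_{\langle a\rangle d,k} = Q_{d,k}$ under which $\Sigma_k(\langle a\rangle d) = \Sigma_k(d)$; and second, that $Q_{-d,k} = Q_{d,k}$ with $\Sigma_k(-d) = \Sigma_k(d)$ for even $k$. Granting these, each of $\Sigma_k(d_1)$, $\Sigma_k(d_2)$, $\Sigma_k(\aqld{\mathscr{L}})$ lands in a group that does not change and takes the same value before and after any of the three types of modification, which is exactly the assertion.

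For the first fact I would argue by induction on $k$, exactly paralleling the inductive structure of Lemma-Definition~\ref{stinvwd}. Write $d = \sum_{i=1}^n \langle a_i\rangle$; then $\langle a\rangle d = \sum_{i=1}^n \langle a a_i\rangle$ is a representative of $\langle a\rangle d$ with $n$ terms. The key computation is that
\[
\Sigma_k(\langle a\rangle d) \;=\; \sum_{1 \leq i_1 < \dots < i_k \leq n} \Bigl\langle \prod_{1 \leq j \leq k} a a_{i_j}\Bigr\rangle \;=\; \sum_{1 \leq i_1 < \dots < i_k \leq n} \Bigl\langle a^k \prod_{1 \leq j \leq k} a_{i_j}\Bigr\rangle,
\]
and since $k$ is even, $a^k$ is a square, so $\langle a^k \prod a_{i_j}\rangle = \langle \prod a_{i_j}\rangle$ by the first relation of $\W(F)$; thus the right-hand side equals $\Sigma_k(d)$ as an element of the \emph{free} group $G$, hence in any quotient. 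For the target groups, $Q_{\langle a\rangle d,0} = \W(F) = Q_{d,0}$ tautologically, $\Sigma_0(\langle a\rangle d) = 1 = \Sigma_0(d)$, and inductively $Q_{\langle a\rangle d,k} = Q_{\langle a\rangle d,k-2}/(\Sigma_{k-2}(\langle a\rangle d)) = Q_{d,k-2}/(\Sigma_{k-2}(d)) = Q_{d,k}$ using the inductive hypothesis for $k-2$; the independence of $\Sigma_k$ of the chosen representative is guaranteed by Lemma-Definition~\ref{stinvwd} itself. The second fact is handled the same way: $-d = \sum_{i=1}^n \langle -a_i\rangle$ (since $-\langle b\rangle = \langle -b\rangle$ in $\W(F)$, as $\langle -1\rangle \langle b\rangle = \langle -b\rangle$ and $\langle -1\rangle = -\langle 1\rangle$), and $\langle \prod(-a_{i_j})\rangle = \langle (-1)^k \prod a_{i_j}\rangle = \langle \prod a_{i_j}\rangle$ because $k$ is even.

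The main obstacle is mostly bookkeeping rather than a genuine difficulty: one must check that the identifications $Q_{\langle a\rangle d,k} = Q_{d,k}$ and $Q_{-d,k} = Q_{d,k}$ are compatible in the sense that, when one performs a change of orientation followed by a change of parametrisation (say), the composite identification of the $Q$-groups is again the identity, so that the statement ``$\Sigma_k(d_i)$ is invariant'' is unambiguous. This follows because all these identifications are induced, at the level of the free group $G$ on the symbols $\langle a\rangle$, by the \emph{identity} map after the square-class simplifications above, so they compose trivially; it is worth recording this explicitly. One should also note at the outset that the statement tacitly asserts $d_1,d_2,\aqld{\mathscr{L}}$ all admit finite-dimensional representatives so that $\Sigma_k$ is defined on them, which is automatic since $\W(F)$ is generated by the one-dimensional forms and $\N_0$ includes $0$ (the empty sum representing $0 \in \W(F)$, for which every $\Sigma_k$ with $k>0$ vanishes). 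The case $n=0$ and more generally small $n < k$ is covered by the convention that an empty sum of higher wedge type is $1$, as already used in the fourth step of the proof of Lemma-Definition~\ref{stinvwd}; no separate treatment is needed here.
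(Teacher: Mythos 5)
Your proposal is correct and follows essentially the same route as the paper: reduce via Lemma~\ref{stchanges} to the single computation $\Sigma_k(\langle a\rangle d)=\sum\langle a^k\prod a_{i_j}\rangle=\Sigma_k(d)$ because $a^k$ is a square for $k$ even (your extra bookkeeping on the identifications $Q_{\langle a\rangle d,k}=Q_{d,k}$ and on empty sums only makes explicit what the paper leaves implicit). One minor caveat: your claim that $\Sigma_k(d_1)$ is unchanged under \emph{reordering} of the components is not quite right (it becomes $\Sigma_k(-d_2)=\Sigma_k(d_2)$), but reordering is not among the modifications the theorem asserts invariance under, so this does not affect the proof.
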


\begin{proof}
	Let $\disp \sum_{i = 1}^n \langle a_i\rangle \in \W(F)$. Note that for all $b \in F^*$:
	\[\Sigma_k(\langle b\rangle \sum_{i = 1}^n \langle a_i\rangle) = \sum_{1 \leq i_1 < \dots < i_k \leq n} \langle b^k \prod_{1 \leq j \leq k} a_{i_j}\rangle = \sum_{1 \leq i_1 < \dots < i_k \leq n} \langle \prod_{1 \leq j \leq k} a_{i_j}\rangle = \Sigma_k(\sum_{i = 1}^n \langle a_i\rangle)\]
	since $b^k$ is a square as $k$ is even. The result follows directly from Lemma \ref{stchanges}.
\end{proof}

\section{How to compute the quadratic linking degree}\label{sechowto}

In this section, we give a method to compute the quadratic linking class, the quadratic linking degree and the ambient quadratic linking degree under reasonable assumptions on the link (which are verified in the examples of Section \ref{seccompute}). See Subsection \ref{subconv} for notations and Subsection \ref{subdef} for definitions.

\subsection{Assumptions}\label{ass}

Let $\mathscr{L}$ be an oriented link with two components such that for all $i \in \{1,2\}$, the closure $\overline{Z_i} \subset \A^4_F$ of $Z_i$ is given by two equations
\[f_i(x,y,z,t) = 0, g_i(x,y,z,t) = 0\]
with $f_i$ and $g_i$ irreducible. We also assume that the subscheme of $X \setminus Z$ given by the equations $g_1=0$ and $g_2=0$ is of codimension $2$ in $X \setminus Z$ and that for each generic point $p$ of an irreducible component of this subscheme, $f_1$ and $f_2$ are units in the residue field $\kappa(p)$.

Let $i \in \{1,2\}$. Note that we can define an orientation of $\normalsheaf{Z_i}{X}$ from the (ordered) couple $(f_i,g_i)$. Indeed, $\normalsheaf{Z_i}{X}$ is the dual of the conormal sheaf $\cono{Z_i}{X} = \I_{Z_i}/\I_{Z_i}^2$, where $\I_{Z_i}$ is the ideal sheaf of $Z_i$ in $X$, and we have the following short exact sequence (see \cite[B.7.4]{fulton}):
\[\xymatrix{0 \ar[r] & (\cono{V(g_i)}{\A^4_F})_{|Z_i} \ar[r] & \cono{Z_i}{X} = (\cono{\overline{Z_i}}{\A^4_F})_{|Z_i} \ar[r] & (\cono{V(f_i)}{\A^4_F})_{|Z_i} \ar[r]  & 0}\]

We define the orientation $o_{(f_i,g_i)}$ as the isomorphism $\nu_{Z_i} \to \Os_{Z_i} \otimes \Os_{Z_i}$ which sends $\overline{f_i}^* \wedge \overline{g_i}^*$ to $1 \otimes 1$. By Lemma \ref{storientation}, there exists $a_i \in F^*$ such that $\overline{o_i} = \overline{o_{(f_i,g_i)} \circ (\times a_i)} = \overline{o_{(a_i^{-1}f_i,g_i)}}$. Without loss of generality (since we can replace $f_i$ with $a_i^{-1}f_i$), we assume that $o_i = o_{(f_i,g_i)}$.

\subsection{Notations}\label{not}

We denote by $\chi^{\odd} : \Z \to \{0,1\}$ the characteristic function of the set of odd numbers.

We denote $\epsilon := - \langle -1 \rangle$ and for all $n \in \N_0$, $n_\epsilon := \sum_{i=1}^n \langle (-1)^{i-1} \rangle$ and $(-n)_\epsilon := \epsilon \, n_\epsilon$.

In order to make explicit computations, we introduce the following notations. Note that the quadratic linking class, the quadratic linking degree and the ambient quadratic linking degree of $\mathscr{L}$ which are computed in Theorems \ref{thmqlc}, \ref{thmqld} and \ref{thmaqld} respectively do not depend on the choices of uniformizing parameters $\pi_p, \pi_{p,q}, \pi_{p,q,0}, \pi'_{p,q,0}$ made below, since these are not used in their Definitions (recall Definitions \ref{defqlc}, \ref{defqld} and \ref{defaqld}). 

We denote by $I$ the set of generic points of irreducible components of the subscheme of $X \setminus Z$ given by the equations $g_1=0$ and $g_2=0$.

For every $p \in I$, we denote by $\pi_p$ a uniformizing parameter of the discrete valuation ring $\Os_{X \setminus Z,p}/(g_1)$, by $u_p$ a unit in $\Os_{X \setminus Z,p}/(g_1)$ and by $m_p \in \Z$ such that $g_2 = u_p \pi_p^{m_p} \in \Os_{X \setminus Z,p}/(g_1)$.

For every $p \in I$ and $q \in \overline{\{p\}}^{(1)} \cap Z$, we denote by $\pi_{p,q}$ a uniformizing parameter of the discrete valuation ring $\Os_{\overline{\{p\}},q}$, by $u_{p,q}$ a unit in $\Os_{\overline{\{p\}},q}$ and by $m_{p,q} \in \Z$ such that $f_1 f_2 u_p = u_{p,q} \pi_{p,q}^{m_{p,q}} \in \Os_{\overline{\{p\}},q}$.

For every $i \in \{1,2\}$, $p \in I$ and $q \in \overline{\{p\}}^{(1)} \cap Z_i$, we denote by $\tau_{p,q} \in \nu_{q}$ such that $\overline{\pi_{p,q}}^* \otimes \overline{\pi_p}^* \otimes \overline{g_1}^* = \tau_{p,q} \otimes (\overline{f_i}^* \wedge \overline{g_i}^*)$, by $v_{p,q,0}$ the discrete valuation of $\Os_{\overline{\{\varphi_i^{-1}(q)\}},0}$ and by $\pi_{p,q,0}$ a uniformizing parameter for $v_{p,q,0}$. Note that such a $\tau_{p,q}$ exists since $\overline{\pi_{p,q}}^* \otimes \overline{\pi_p}^* \otimes \overline{g_1}^* \in  \Z[(\nu_{p,q} \otimes_{\kappa(q)} (\nu_{Z_i})_{|q}) \setminus \{0\}]$.

For every $i \in \{1,2\}$, $p \in I$ and $q \in \overline{\{p\}}^{(1)} \cap Z_i$, we let $(u_{p,q,0},m_{p,q,0}) \in \Os^*_{\overline{\{\varphi_i^{-1}(q)\}},0} \times \Z$ be the unique couple such that $\varphi_i^*(\overline{u_{p,q}}) = u_{p,q,0} \pi_{p,q,0}^{m_{p,q,0}}$ and we denote by $\lambda_{p,q,0} \in \K{F}{0}$ such that $\eta^2 \otimes (\overline{\pi_{p,q,0}}^* \otimes \varphi_i^*(\tau_{p,q})) = \lambda_{p,q,0} \, \eta^2 \otimes (\overline{u}^* \wedge \overline{v}^*)$. Note that such a $\lambda_{p,q,0}$ exists since $\overline{\pi_{p,q,0}}^* \otimes \varphi_i^*(\tau_{p,q}) \in \Z[(\det(\normalsheaf{\{0\}}{\A^2_F})_{|0}) \setminus \{0\}]$.

For every $p \in I$ and $q \in \overline{\{p\}}^{(1)} \cap Z_1$, we denote by $v'_{p,q,0}$ the discrete valuation of $\Os_{\overline{\{q\}},0}$ and by $\pi'_{p,q,0}$ a uniformizing parameter for $v'_{p,q,0}$; we let $(u'_{p,q,0},m'_{p,q,0}) \in \Os^*_{\overline{\{q\}},0} \times \Z$ be the unique couple such that $\overline{u_{p,q}} = u'_{p,q,0} (\pi'_{p,q,0})^{m'_{p,q,0}}$ and we denote by $\lambda'_{p,q,0} \in \K{F}{0}$ such that $\eta^2 \otimes (\overline{\pi'_{p,q,0}}^* \otimes \overline{\pi_{p,q}}^* \otimes \overline{\pi_p}^* \otimes \overline{g_1}^*) = \lambda'_{p,q,0} \, \eta^2 \otimes (\overline{x}^* \wedge \overline{y}^* \wedge \overline{z}^* \wedge \overline{t}^*)$. Note that such a $\lambda'_{p,q,0}$ exists since $\overline{\pi'_{p,q,0}}^* \otimes \overline{\pi_{p,q}}^* \otimes \overline{\pi_p}^* \otimes \overline{g_1}^* \in \Z[(\det(\normalsheaf{\{0\}}{\A^4_F})_{|0}) \setminus \{0\}]$.

\subsection{Computing the quadratic linking class and degree}\label{subcompqlc}

\begin{theorem}\label{thmqlc}
	Under the assumptions of Subsection \ref{ass} and with the notations in Subsection \ref{not}, the cycle
	\[\sum_{p \in I} \sum_{q \in \overline{\{p\}}^{(1)} \cap Z} \langle \overline{u_{p,q}}\rangle \eta \, \chi^{\odd}(m_p m_{p,q}) \otimes (\overline{\pi_{p,q}}^* \otimes \overline{\pi_p}^* \otimes \overline{g_1}^*)\]
	where $\langle \overline{u_{p,q}}\rangle \eta \, \chi^{\odd}(m_p m_{p,q}) \otimes (\overline{\pi_{p,q}}^* \otimes \overline{\pi_p}^* \otimes \overline{g_1}^*) \in \KS{\kappa(q)}{-1}{\nu_{q} \otimes (\nu_Z)_{|q}}$, represents the quadratic linking class of $\mathscr{L}$.
\end{theorem}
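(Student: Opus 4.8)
The plan is to exhibit explicit representatives of the two Seifert classes, to compute their intersection product as a cycle supported on the points of $I$, and to read off the residues of its image under $\partial$ by means of the explicit residue formula of Appendix \ref{secresidue}. First I would show that $\seifert_i$ is represented by the cycle $\langle f_i\rangle\otimes\overline{g_i}^*\in\Cm^1(X\setminus Z,\KF{1})$ supported at the generic point of $V(g_i)$ (note that $g_i$, being a prime element of the factorial ring $F[x,y,z,t]$, is a uniformiser along $V(g_i)$, and that $f_i$ is a unit at the generic point of $V(g_i)$ since $f_i$ and $g_i$ are distinct irreducibles). Because $V(f_i)\cap V(g_i)=\overline{Z_i}$ while $\overline{Z_i}\setminus Z_i\subseteq\{0\}$, the locus $\overline{Z_i}\cap(X\setminus Z)$ is empty, so $f_i$ is in fact a unit everywhere on $V(g_i)\cap(X\setminus Z)$; in particular $\langle f_i\rangle\otimes\overline{g_i}^*$ is a cocycle, and its boundary in $\CHMW{Z}{0}{-1}{\nu_Z}$ has zero component at the generic point of $Z_{3-i}$ (otherwise $f_i$ would vanish there, forcing $Z_{3-i}\subseteq V(f_i)\cap V(g_i)=\overline{Z_i}$ and contradicting $Z_1\cap Z_2=\emptyset$). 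Since $f_i$ and $g_i$ form a regular system of parameters at the generic point $\xi_i$ of $Z_i$ (as in Subsection \ref{ass}), the image of $f_i$ is a uniformiser of the discrete valuation ring $\Os_{V(g_i),\xi_i}$, so the residue formula shows that the $\xi_i$-component of this boundary is $\eta\otimes(\overline{f_i}^*\wedge\overline{g_i}^*)$, which is exactly the oriented fundamental class $[o_i]$ because $o_i=o_{(f_i,g_i)}$ sends $\overline{f_i}^*\wedge\overline{g_i}^*$ to $1\otimes 1$ (see Definition \ref{defclass}). Hence $\partial(\langle f_1\rangle\otimes\overline{g_1}^*)=([o_1],0)$ and $\partial(\langle f_2\rangle\otimes\overline{g_2}^*)=(0,[o_2])$, so these cycles represent $\seifert_1$ and $\seifert_2$.

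Next I would compute the intersection product. As $f_i$ is a unit on $V(g_i)\cap(X\setminus Z)$, the cycle $\langle f_i\rangle\otimes\overline{g_i}^*$ is $\langle f_i\rangle$ times the fundamental class of $V(g_i)\cap(X\setminus Z)$, and by hypothesis $V(g_1)\cap V(g_2)$ has codimension $2$ in $X\setminus Z$, so the intersection is proper. Using the description of the intersection product (Definition \ref{defintprod}) together with the explicit residue formula (Theorem \ref{computeresidue}) --- which in the discrete valuation ring $\Os_{X\setminus Z,p}/(g_1)$ uses the factorisation $g_2=u_p\pi_p^{m_p}$ and the fact that $f_1$ and $f_2$ are units at $p$ --- one obtains that $\seifert_1\cdot\seifert_2$ is represented by the cycle
\[\sum_{p\in I}\langle f_1 f_2 u_p\rangle\,(m_p)_\epsilon\otimes(\overline{\pi_p}^*\otimes\overline{g_1}^*)\ \in\ \Cm^2(X\setminus Z,\KF{2})\]
(for the Hopf link of Example \ref{exHopf} this recovers the class of $\langle xz\rangle\otimes(\overline t^*\wedge\overline y^*)$).

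It then remains to apply $\partial:\CH{X\setminus Z}{2}{2}\to\CHMW{Z}{1}{0}{\nu_Z}$ to this cycle. Being a product of cocycles it is a cocycle, so its residues at the codimension-$1$ points of the various $\overline{\{p\}}$ that do not lie on $Z$ vanish, and only the $q\in\overline{\{p\}}^{(1)}\cap Z$ contribute. For such a $q$, computing in $\Os_{\overline{\{p\}},q}$ with uniformiser $\pi_{p,q}$ and using $f_1 f_2 u_p=u_{p,q}\pi_{p,q}^{m_{p,q}}$, the residue formula gives that the residue at $q$ is
\[(m_p)_\epsilon\cdot\langle\overline{u_{p,q}}\rangle\,(m_{p,q})_\epsilon\,\eta\otimes(\overline{\pi_{p,q}}^*\otimes\overline{\pi_p}^*\otimes\overline{g_1}^*).\]
Since $\langle -1\rangle\eta=-\eta$ (see the proof of Lemma \ref{stchanges}), one has $n_\epsilon\eta=\chi^{\odd}(n)\eta$ for every $n\in\Z$, so the above equals $\langle\overline{u_{p,q}}\rangle\,\chi^{\odd}(m_p m_{p,q})\,\eta\otimes(\overline{\pi_{p,q}}^*\otimes\overline{\pi_p}^*\otimes\overline{g_1}^*)$. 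Summing over $p$ and $q$ yields the announced cycle, which therefore represents $\qlc{\mathscr{L}}=\partial(\seifert_1\cdot\seifert_2)$.

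The delicate step is the computation of the intersection product: the Chow--Witt product refines the classical one not only by the quadratic multiplicity $(m_p)_\epsilon$ but also by the unit factor $\langle f_1 f_2 u_p\rangle$ and by a determinantal twist, all of which have to be extracted carefully from the explicit residue description. The verifications in the first and third steps then amount to routine bookkeeping with Morel's residue formulas and with the orderings in the determinantal twists.
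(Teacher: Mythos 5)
Your proposal is correct and follows essentially the same route as the paper: identify the Seifert cycles as $\langle f_i\rangle\otimes\overline{g_i}^*$, compute the intersection product as $\sum_{p\in I}(m_p)_\epsilon\langle f_1f_2u_p\rangle\otimes(\overline{\pi_p}^*\otimes\overline{g_1}^*)$, and push the result through the boundary map using the explicit residue formula of Theorem \ref{computeresidue} together with Lemmas \ref{epslemma} and \ref{chilemma} (your extra verifications in the first step, that the cycle is a cocycle and has no residue along $Z_{3-i}$, are correct and merely spell out what the paper leaves implicit). The one caveat is that the intersection-product formula is not a formal consequence of Definition \ref{defintprod} and Theorem \ref{computeresidue} alone --- in the paper it is the separately established Corollary \ref{intformula} (resting on Theorem \ref{intformuladeg}, which requires the projection formula and the Gysin computation for a principal divisor) --- so that step should cite this result rather than be presented as extracted directly from the residue description.
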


\begin{proof}
	From Definition \ref{defclass}, the oriented fundamental class $[o_i]$ is the class in $\CHMW{Z_i}{0}{-1}{\nu_{Z_i}}$ of $\eta \otimes (\overline{f_i}^* \wedge \overline{g_i}^*)$. It follows from Definition \ref{Seifert} and Theorem \ref{computeresidue} that the Seifert class $\seifert_i$ of $Z_i$ is the class in $\CH{X \setminus Z}{1}{1}$ of $\langle f_i\rangle \otimes \overline{g_i}^*$ (over the generic point $p_i$ of the hypersurface of $X \setminus Z$ of equation $g_i = 0$). In the expression above, $\langle f_i\rangle \in \K{\kappa(p_i)}{0}$ and $\overline{g_i}^* \in \Z[\det(\normalsheaf{\overline{\{p_i\}}}{X \setminus Z}) \setminus \{0\}]$; with a slight abuse of notation, we denoted by $f_i$ the image in the fraction field of $F[x,y,z,t]/(g_i)$ of $f_i \in F[x,y,z,t]$. We will make similar slight abuses of notation below.
	
	By Corollary \ref{intformula}, the intersection product of the Seifert class $\seifert_1$ of $Z_1$ with the Seifert class $\seifert_2$ of $Z_2$ is the class in $\CH{X \setminus Z}{2}{2}$ of the cycle:
	\[\sum_{p \in I} (m_p)_\epsilon \langle f_1f_2u_p\rangle \otimes (\overline{\pi_p}^* \otimes \overline{g_1}^*)\]
	
	The quadratic linking class is the image of this intersection product by the boundary map $\partial : \CH{X \setminus Z}{2}{2} \to \CHMW{Z}{1}{0}{\nu_Z}$ thus the cycle
	\[\sum_{p \in I} \sum_{q \in \overline{\{p\}}^{(1)} \cap Z}  (m_p)_\epsilon \partial^{\pi_{p,q}}_{v_q}(\langle f_1f_2u_p\rangle) \otimes (\overline{\pi_{p,q}}^* \otimes \overline{\pi_p}^* \otimes \overline{g_1}^*)\]
	represents the quadratic linking class (note that we used Proposition \ref{propresidue} to extract $(m_p)_\epsilon$ from the morphism $\partial^{\pi_{p,q}}_{v_q}$). By Theorem \ref{computeresidue} and Lemma \ref{chilemma}, the cycle
	\[\sum_{p \in I} \sum_{q \in \overline{\{p\}}^{(1)} \cap Z} \langle \overline{u_{p,q}}\rangle \eta \, \chi^{\odd}(m_p m_{p,q}) \otimes (\overline{\pi_{p,q}}^* \otimes \overline{\pi_p}^* \otimes \overline{g_1}^*)\]
	represents the quadratic linking class of $\mathscr{L}$.
\end{proof}

\begin{theorem}\label{thmqld}
	Under the assumptions of Subsection \ref{ass} and with the notations in Subsection \ref{not}, the quadratic linking degree of $\mathscr{L}$ is the following couple of elements of $\W(F)$:
	\[\left( \sum_{p \in I} \sum_{q \in \overline{\{p\}}^{(1)} \cap Z_1} \lambda_{p,q,0} \langle \overline{u_{p,q,0}}\rangle \, \chi^{\odd}(m_p m_{p,q} m_{p,q,0}),	\sum_{p \in I} \sum_{q \in \overline{\{p\}}^{(1)} \cap Z_2} \lambda_{p,q,0} \langle \overline{u_{p,q,0}}\rangle \, \chi^{\odd}(m_p m_{p,q} m_{p,q,0})\right) \]
\end{theorem}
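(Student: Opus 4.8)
The plan is to start from the cycle representative for the quadratic linking class $\qlc{\mathscr{L}}$ established in Theorem \ref{thmqlc}, namely
\[\sum_{p \in I} \sum_{q \in \overline{\{p\}}^{(1)} \cap Z} \langle \overline{u_{p,q}}\rangle \eta \, \chi^{\odd}(m_p m_{p,q}) \otimes (\overline{\pi_{p,q}}^* \otimes \overline{\pi_p}^* \otimes \overline{g_1}^*),\]
and to push it through the chain of isomorphisms $(\zeta \oplus \zeta) \circ (\varphi_1^* \oplus \varphi_2^*) \circ (\widetilde{o_1} \oplus \widetilde{o_2})$ that defines $\qld{\mathscr{L}}$ in Definition \ref{defqld}. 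The computation splits componentwise over $Z_1$ and $Z_2$ via Remark \ref{rklink}, so it suffices to treat one $q \in \overline{\{p\}}^{(1)} \cap Z_i$ at a time and sum.

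First I would apply $\widetilde{o_i}$: by Assumption \ref{ass} we have $o_i = o_{(f_i,g_i)}$, so $\widetilde{o_i}$ is induced by the isomorphism sending $\overline{f_i}^* \wedge \overline{g_i}^*$ to $1 \otimes 1$; by the very definition of $\tau_{p,q}$ in Subsection \ref{not} (the relation $\overline{\pi_{p,q}}^* \otimes \overline{\pi_p}^* \otimes \overline{g_1}^* = \tau_{p,q} \otimes (\overline{f_i}^* \wedge \overline{g_i}^*)$), this replaces the twist $\otimes (\overline{\pi_{p,q}}^* \otimes \overline{\pi_p}^* \otimes \overline{g_1}^*)$ by $\otimes \tau_{p,q}$, landing in $\CH{Z_i}{1}{0}$ with a term $\langle \overline{u_{p,q}}\rangle \eta \, \chi^{\odd}(m_p m_{p,q})$ supported at $q$, twisted by $\tau_{p,q}$. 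Next I would apply $\varphi_i^*$, which pulls $q$ back to $\varphi_i^{-1}(q)$ and sends $\overline{u_{p,q}}$ to $u_{p,q,0} \pi_{p,q,0}^{m_{p,q,0}}$ and $\tau_{p,q}$ to $\varphi_i^*(\tau_{p,q})$; using $\langle u_{p,q,0}\pi_{p,q,0}^{m_{p,q,0}}\rangle \eta = \langle u_{p,q,0}\rangle \eta \, \chi^{\odd}(m_{p,q,0}) + (\text{terms killed by }\eta\text{ or vanishing})$ in the spirit of Lemma \ref{chilemma}, the odd-parity characteristic functions multiply to give $\chi^{\odd}(m_p m_{p,q} m_{p,q,0})$.

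Then I would apply $\zeta$, which by its factorization in Subsection \ref{subconv} is: first the boundary map $\partial : \CH{\As{2}{F}}{1}{0} \to \CHMW{\{0\}}{0}{-2}{\det(\normalsheaf{\{0\}}{\A^2_F})}$, then the identification of $\det(\normalsheaf{\{0\}}{\A^2_F})$ sending $\overline{u}^* \wedge \overline{v}^*$ to $1 \otimes 1$, then $\eta^2 \mapsto 1$. Applying $\partial$ at $\varphi_i^{-1}(q)$ with uniformizer $\pi_{p,q,0}$ to $\langle \overline{u_{p,q,0}}\rangle \eta$ produces, via Theorem \ref{computeresidue}, a term $\langle \overline{u_{p,q,0}}\rangle \eta^2$ (the residue of $\langle u \rangle \eta$ along a uniformizer is $\langle \bar u\rangle \eta^2$ up to the relevant twist), twisted now by $\overline{\pi_{p,q,0}}^* \otimes \varphi_i^*(\tau_{p,q})$; by the definition of $\lambda_{p,q,0}$ this twist equals $\lambda_{p,q,0}$ times $\overline{u}^* \wedge \overline{v}^*$, and after the identification and $\eta^2 \mapsto 1$ we land on $\lambda_{p,q,0} \langle \overline{u_{p,q,0}}\rangle$ in $\W(F)$, multiplied by the accumulated $\chi^{\odd}(m_p m_{p,q} m_{p,q,0})$. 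Summing over $p$ and $q \in \overline{\{p\}}^{(1)} \cap Z_i$ yields the $i$-th component of the claimed couple.

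The main obstacle I expect is the careful bookkeeping of the orientation twists and the signs/scalars as they propagate through the three isomorphisms — in particular, verifying at each stage that the defined elements $\tau_{p,q}$, $\lambda_{p,q,0}$ exist and interact correctly with $\eta$-multiplication, and that no $\langle -1\rangle$ factors are dropped. The parity manipulations (Lemma \ref{chilemma}) and the precise form of the residue on elements of the shape $\langle u\rangle \eta$ (Theorem \ref{computeresidue}) must be applied with the correct twists, and one must confirm that the terms with even valuation contribute $0$ after multiplication by $\eta$ (so that only the $\chi^{\odd}$ parts survive), which is what collapses the triple-product characteristic function $\chi^{\odd}(m_p)\,\chi^{\odd}(m_{p,q})\,\chi^{\odd}(m_{p,q,0})$ into the single $\chi^{\odd}(m_p m_{p,q} m_{p,q,0})$ appearing in the statement.
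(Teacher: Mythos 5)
Your proposal is correct and follows essentially the same route as the paper: apply $\widetilde{o_1}\oplus\widetilde{o_2}$ via $o_i=o_{(f_i,g_i)}$ and the defining relation of $\tau_{p,q}$, then $\varphi_1^*\oplus\varphi_2^*$, then the boundary map computed by Theorem \ref{computeresidue}, and finally the identification through $\lambda_{p,q,0}$ and the canonical map $\eta^2\mapsto 1$. The only bookkeeping slip is that the factor $\chi^{\odd}(m_{p,q,0})$ arises from the residue $\partial^{\pi_{p,q,0}}_{v_{p,q,0}}(\langle\varphi_i^*(\overline{u_{p,q}})\rangle)=\langle\overline{u_{p,q,0}}\rangle\eta\,\chi^{\odd}(m_{p,q,0})$ at the boundary-map step rather than from the pullback itself (the residue of $\langle u\rangle\eta$ with $u$ a unit is $0$, not $\langle\overline{u}\rangle\eta^{2}$), but this does not change your final formula.
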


\begin{proof}	
	Recall from Definition \ref{defqld} that the first step in computing the quadratic linking degree from the quadratic linking class consists in applying $\widetilde{o_1} \oplus \widetilde{o_2}$. It follows from Theorem \ref{thmqlc} and the assumption that for all $i \in \{1,2\}$, $o_i = o_{(f_i,g_i)}$ (see Subsection \ref{ass}) that the couple of cycles
	\[\left( \sum_{p \in I} \sum_{q \in \overline{\{p\}}^{(1)} \cap Z_1} \langle \overline{u_{p,q}}\rangle \eta \, \chi^{\odd}(m_p m_{p,q}) \otimes \tau_{p,q},\sum_{p \in I} \sum_{q \in \overline{\{p\}}^{(1)} \cap Z_2} \langle \overline{u_{p,q}}\rangle \eta \, \chi^{\odd}(m_p m_{p,q}) \otimes \tau_{p,q}\right) \]
	where $\langle \overline{u_{p,q}}\rangle \eta \, \chi^{\odd}(m_p m_{p,q}) \otimes \tau_{p,q} \in \KS{\kappa(q)}{-1}{\nu_{p,q}}$, represents $(\widetilde{o_1} \oplus \widetilde{o_2})(\Qlc_{\mathscr{L}})$.
		
	It follows that the couple of cycles
	\begin{align*}
	\left( \sum_{p \in I} \sum_{q \in \overline{\{p\}}^{(1)} \cap Z_1} \langle \varphi_1^*(\overline{u_{p,q}})\rangle \eta \, \chi^{\odd}(m_p m_{p,q}) \otimes \varphi_1^*(\tau_{p,q}),\right.\\
	\left. \sum_{p \in I} \sum_{q \in \overline{\{p\}}^{(1)} \cap Z_2} \langle \varphi_2^*(\overline{u_{p,q}})\rangle \eta \, \chi^{\odd}(m_p m_{p,q}) \otimes \varphi_2^*(\tau_{p,q})\right)
	\end{align*}
	where for all $i \in \{1,2\}$, $\langle \varphi_i^*(\overline{u_{p,q}})\rangle \eta \, \chi^{\odd}(m_p m_{p,q}) \otimes \varphi_i^*(\tau_{p,q}) \in \KS{\kappa(\varphi_i^{-1}(q))}{-1}{\nu_{\varphi_i^{-1}(q)}}$, represents $(\varphi_1^* \oplus \varphi_2^*)(\widetilde{o_1} \oplus \widetilde{o_2})(\Qlc_{\mathscr{L}})$. This is the second step in computing the quadratic linking degree (see Definition \ref{defqld}).
	
	Recall from Definition \ref{defqld} and Definition \ref{conviso} that the third step in computing the quadratic linking degree consists in applying the boundary map 
	\[\partial : \Cm^1(\As{2}{F},\KF{0}) \to \Cm^0(\{0\},\KMWF{-2}{\det(\normalsheaf{\{0\}}{\A^2_F})})\]
	to each element of the couple above, which gives:
	\begin{align*} 
	\left(\sum_{p \in I} \sum_{q \in \overline{\{p\}}^{(1)} \cap Z_1} \partial^{\pi_{p,q,0}}_{v_{p,q,0}}(\langle \varphi_1^*(\overline{u_{p,q}})\rangle) \eta \, \chi^{\odd}(m_p m_{p,q}) \otimes (\overline{\pi_{p,q,0}}^* \otimes \varphi_1^*(\tau_{p,q})),\right.\\
	\left.\sum_{p \in I} \sum_{q \in \overline{\{p\}}^{(1)} \cap Z_2} \partial^{\pi_{p,q,0}}_{v_{p,q,0}}(\langle \varphi_2^*(\overline{u_{p,q}})\rangle) \eta \, \chi^{\odd}(m_p m_{p,q}) \otimes (\overline{\pi_{p,q,0}}^* \otimes \varphi_2^*(\tau_{p,q})) \right) 
	\end{align*}
	where $\partial^{\pi_{p,q,0}}_{v_{p,q,0}}(\langle \varphi_i^*(\overline{u_{p,q}})\rangle) \eta \, \chi^{\odd}(m_p m_{p,q}) \otimes (\overline{\pi_{p,q,0}}^* \otimes \varphi_i^*(\tau_{p,q})) \in \KS{\kappa(0)}{-2}{\det(\normalsheaf{\{0\}}{\A^2_F})}$.
	
	By Theorem \ref{computeresidue}, for every $i \in \{1,2\}$ we have $\partial^{\pi_{p,q,0}}_{v_{p,q,0}}(\langle \varphi_i^*(\overline{u_{p,q}})\rangle) = \langle \overline{u_{p,q,0}}\rangle \eta \, \chi^{\odd}(m_{p,q,0})$ thus the third step gives:
	\begin{align*}
	\left( \sum_{p \in I} \sum_{q \in \overline{\{p\}}^{(1)} \cap Z_1} \langle \overline{u_{p,q,0}}\rangle \, \eta^2 \, \chi^{\odd}(m_p m_{p,q} m_{p,q,0}) \otimes (\overline{\pi_{p,q,0}}^* \otimes \varphi_1^*(\tau_{p,q})), \right.\\
	\left. \sum_{p \in I} \sum_{q \in \overline{\{p\}}^{(1)} \cap Z_2} \langle \overline{u_{p,q,0}}\rangle \, \eta^2 \, \chi^{\odd}(m_p m_{p,q} m_{p,q,0}) \otimes (\overline{\pi_{p,q,0}}^* \otimes \varphi_2^*(\tau_{p,q}))\right) 
	\end{align*}
	
	From Definition \ref{conviso} and the notations in Subsection \ref{not}, using the canonical isomorphism $\K{F}{-2} \simeq \W(F)$ (which sends $\eta^2$ to $1$), the final step gives:
	\[\left( \sum_{p \in I} \sum_{q \in \overline{\{p\}}^{(1)} \cap Z_1} \lambda_{p,q,0} \langle \overline{u_{p,q,0}}\rangle \, \chi^{\odd}(m_p m_{p,q} m_{p,q,0}),	\sum_{p \in I} \sum_{q \in \overline{\{p\}}^{(1)} \cap Z_2} \lambda_{p,q,0} \langle \overline{u_{p,q,0}}\rangle \, \chi^{\odd}(m_p m_{p,q} m_{p,q,0})\right) \]
\end{proof}

\begin{theorem}\label{thmaqld}
	Under the assumptions of Subsection \ref{ass} and with the notations in Subsection \ref{not}, the ambient quadratic linking degree of $\mathscr{L}$ is the following element of $\W(F)$:
	\[\sum_{p \in I} \sum_{q \in \overline{\{p\}}^{(1)} \cap Z_1} \lambda'_{p,q,0} \langle \overline{u'_{p,q,0}}\rangle \, \chi^{\odd}(m_p m_{p,q} m'_{p,q,0})\]
\end{theorem}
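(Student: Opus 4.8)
The plan is to follow the same four-step strategy used in the proof of Theorem \ref{thmqld}, but to replace, at the very last step, the passage through $Z_1 \simeq \As{2}{F}$ by the pushforward into the ambient space $X = \As{4}{F}$. Recall from Definition \ref{defaqld} that $\aqld{\mathscr{L}} = \zeta'((i_1)_*(\sigma_{1,\mathscr{L}}))$, where $(i_1)_*$ is the inclusion of the subcomplex $\Cm^{\bullet-2}(Z_1,\KMWF{0}{\nu_{Z_1}})$ into $\Cm^{\bullet}(X,\KF{2})$ (see Notation \ref{notinclsubcomp}) and $\zeta'$ is the chosen isomorphism $\CH{X}{3}{2} \to \W(F)$ (see Subsection \ref{subconv} and Definition \ref{convisobis}). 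So I would start from the representative of the quadratic linking class obtained in Theorem \ref{thmqlc}, keep only the part supported on $Z_1$ (i.e. restrict the outer sum over $q$ to $q \in \overline{\{p\}}^{(1)} \cap Z_1$), which by Definition \ref{defqlc} and Remark \ref{rklink} represents $\sigma_{1,\mathscr{L}}$, and then track this cycle through $(i_1)_*$ and $\zeta'$.

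First I would record that, by Theorem \ref{thmqlc}, the cycle
\[\sum_{p \in I} \sum_{q \in \overline{\{p\}}^{(1)} \cap Z_1} \langle \overline{u_{p,q}}\rangle \eta \, \chi^{\odd}(m_p m_{p,q}) \otimes (\overline{\pi_{p,q}}^* \otimes \overline{\pi_p}^* \otimes \overline{g_1}^*)\]
represents $\sigma_{1,\mathscr{L}} \in \CHMW{Z_1}{1}{0}{\nu_{Z_1}}$, where the summand indexed by $(p,q)$ lies in $\KS{\kappa(q)}{-1}{\nu_q \otimes (\nu_{Z_1})_{|q}}$. Next, applying $(i_1)_*$ is, on the level of cycles, just the inclusion of complexes, so the same cycle represents $(i_1)_*(\sigma_{1,\mathscr{L}})$, now viewed inside $\Cm^2(X,\KF{2})$ and supported on the codimension-$2$ points $q$; here I should be careful that the twist $\nu_q \otimes (\nu_{Z_1})_{|q}$ is identified with $\det(\normalsheaf{\overline{\{q\}}}{X})$ in the canonical way used to build the subcomplex inclusion, which is exactly the point of introducing the $\tau_{p,q}$ in Subsection \ref{not} — but since we are going all the way to the point $0$, we actually want to keep the expression $\overline{\pi_{p,q}}^* \otimes \overline{\pi_p}^* \otimes \overline{g_1}^*$ rather than rewrite it via $\tau_{p,q}$.

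Then I would compute $\zeta'$ following Definition \ref{convisobis}: this means applying the boundary map $\partial : \Cm^2(X,\KF{2}) \to \Cm^3(X,\KF{2})$ down to the closed point $0 \in \A^4_F$, then using the isomorphism $\det(\normalsheaf{\{0\}}{\A^4_F}) \to \Os_{\{0\}} \otimes \Os_{\{0\}}$ sending $\overline{x}^* \wedge \overline{y}^* \wedge \overline{z}^* \wedge \overline{t}^*$ to $1 \otimes 1$, and finally the canonical isomorphism $\K{F}{-2} \simeq \W(F)$ sending $\eta^2$ to $1$. Concretely: for each $(p,q)$ with $q \in \overline{\{p\}}^{(1)} \cap Z_1$, one computes $\partial^{\pi'_{p,q,0}}_{v'_{p,q,0}}(\langle \overline{u_{p,q}}\rangle)$ on the discrete valuation ring $\Os_{\overline{\{q\}},0}$; by Theorem \ref{computeresidue} this equals $\langle \overline{u'_{p,q,0}}\rangle \eta \, \chi^{\odd}(m'_{p,q,0})$, where $u'_{p,q,0}, m'_{p,q,0}$ are as defined in Subsection \ref{not}. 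Multiplying the two copies of $\eta$ gives an $\eta^2$, the two characteristic-function factors combine via $\chi^{\odd}(m_p m_{p,q})\chi^{\odd}(m'_{p,q,0}) = \chi^{\odd}(m_p m_{p,q} m'_{p,q,0})$ (Lemma \ref{chilemma}), and the twist $\overline{\pi'_{p,q,0}}^* \otimes \overline{\pi_{p,q}}^* \otimes \overline{\pi_p}^* \otimes \overline{g_1}^*$ is converted to $\overline{x}^* \wedge \overline{y}^* \wedge \overline{z}^* \wedge \overline{t}^*$ up to the factor $\lambda'_{p,q,0} \in \K{F}{0}$, by definition of $\lambda'_{p,q,0}$ in Subsection \ref{not}. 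Collecting these, the image in $\W(F)$ is
\[\sum_{p \in I} \sum_{q \in \overline{\{p\}}^{(1)} \cap Z_1} \lambda'_{p,q,0} \langle \overline{u'_{p,q,0}}\rangle \, \chi^{\odd}(m_p m_{p,q} m'_{p,q,0}),\]
as claimed.

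I expect the main obstacle to be purely bookkeeping rather than conceptual: one must check that it is legitimate to compute $\zeta'$ on the cycle representative supported on $Z_1$ directly (rather than first passing to a representative of $(i_1)_*(\sigma_{1,\mathscr{L}})$ supported only at $0$), i.e. that the boundary map of the Rost-Schmid complex of $X$ applied to our cycle, which a priori could also hit codimension-$3$ points of $X$ not lying over $0$, in fact contributes nothing away from $0$ after passing to cohomology. This is where the assumption that $\{g_1=0,g_2=0\}$ has codimension $2$ in $X\setminus Z$ and the structure of the situation are used — the only codimension-$3$ point of $\A^4_F$ in the closure of each $\overline{\{q\}}$ with $q \in Z_1$ that matters is $0$ itself, since $\overline{\{q\}}$ is a curve through the origin. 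I would phrase this carefully, perhaps by noting that $(i_1)_*(\sigma_{1,\mathscr{L}})$ is already a well-defined class in $\CH{X}{3}{2}$ (indeed $\CH{X}{4}{2}=0$ by Proposition \ref{RS}, so any cocycle representative suffices and the image under $\zeta'$ is computed as above), and that the computation of $\zeta'$ via $\partial$ followed by the two identifications is exactly the definition in Definition \ref{convisobis}. The remaining verifications — the multiplicativity of $\chi^{\odd}$, the extraction of scalars from residue morphisms (Proposition \ref{propresidue}), and the compatibility of $\zeta'$ with multiplication by $\langle a\rangle$ — are all already available and routine.
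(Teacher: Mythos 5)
Your proposal is correct and follows essentially the same route as the paper: take the representative of $\sigma_{1,\mathscr{L}}$ from Theorem \ref{thmqlc}, observe that $(i_1)_*$ is just the inclusion of a subcomplex so the same cycle represents $(i_1)_*(\sigma_{1,\mathscr{L}})$, apply the boundary map to the origin termwise on each curve $\overline{\{q\}}$ via Theorem \ref{computeresidue}, and convert the twist using $\lambda'_{p,q,0}$ and the identification $\K{F}{-2}\simeq\W(F)$. The only slip is bookkeeping: the points $q$ have codimension $3$ in $X$ (they are generic points of curves in a fourfold), so the pushed-forward cycle lives in $\Cm^{3}(X,\KF{2})$, not $\Cm^{2}(X,\KF{2})$, and the relevant boundary map is $\partial : \Cm^3(\As{4}{F},\KF{2}) \to \Cm^0(\{0\},\KMWF{-2}{\det(\normalsheaf{\{0\}}{\A^4_F})})$ exactly as in Definition \ref{convisobis}.
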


\begin{proof}
	By Definition \ref{defaqld}, the ambient quadratic linking degree of $\mathscr{L}$ is equal to $\zeta'((i_1)_*(\sigma_{1,\mathscr{L}}))$. Note that by Theorem \ref{thmqlc}, $\sigma_{1,\mathscr{L}} \in \CHMW{Z_1}{1}{0}{\nu_{Z_1}}$ is represented by the cycle
	\[\sum_{p \in I} \sum_{q \in \overline{\{p\}}^{(1)} \cap Z_1} \langle \overline{u_{p,q}}\rangle \eta \, \chi^{\odd}(m_p m_{p,q}) \otimes (\overline{\pi_{p,q}}^* \otimes \overline{\pi_p}^* \otimes \overline{g_1}^*)\]
	so that $(i_1)_*(\sigma_{1,\mathscr{L}}) \in \CH{X}{3}{2}$ is represented by the cycle
	\[\sum_{p \in I} \sum_{q \in \overline{\{p\}}^{(1)} \cap Z_1} \langle \overline{u_{p,q}}\rangle \eta \, \chi^{\odd}(m_p m_{p,q}) \otimes (\overline{\pi_{p,q}}^* \otimes \overline{\pi_p}^* \otimes \overline{g_1}^*)\]
	since $(i_1)_*$ is induced by the inclusion of the subcomplex $\Cm^{\bullet - 2}(Z_1,\KMWF{0}{\nu_{Z_1}})$ in $\Cm^{\bullet}(X,\KF{2})$ (see Notation \ref{notinclsubcomp}). We then apply (see Definition \ref{convisobis}) the boundary map $\partial :  \Cm^3(\As{4}{F},\KF{2}) \to \Cm^0(\{0\},\KMWF{-2}{\det(\normalsheaf{\{0\}}{\A^4_F})})$, which gives:
	\[\sum_{p \in I} \sum_{q \in \overline{\{p\}}^{(1)} \cap Z_1} \partial^{\pi'_{p,q,0}}_{v'_{p,q,0}}(\langle \overline{u_{p,q}}\rangle) \eta \, \chi^{\odd}(m_p m_{p,q}) \otimes (\overline{\pi'_{p,q,0}}^* \otimes \overline{\pi_{p,q}}^* \otimes \overline{\pi_p}^* \otimes \overline{g_1}^*)\]
	where $\partial^{\pi'_{p,q,0}}_{v'_{p,q,0}}(\langle \overline{u_{p,q}}\rangle) \eta \, \chi^{\odd}(m_p m_{p,q}) \otimes (\overline{\pi'_{p,q,0}}^* \otimes \overline{\pi_{p,q}}^* \otimes \overline{\pi_p}^* \otimes \overline{g_1}^*) \in \KS{\kappa(0)}{-2}{\det(\normalsheaf{\{0\}}{\A^4_F})}$.
	
	By Theorem \ref{computeresidue}, we have $\partial^{\pi'_{p,q,0}}_{v'_{p,q,0}}(\langle \overline{u_{p,q}}\rangle) = \langle \overline{u'_{p,q,0}}\rangle \eta \, \chi^{\odd}(m'_{p,q,0})$ thus $\partial((i_1)_*(\sigma_{1,\mathscr{L}}))$ is represented by the cycle:
	\[\sum_{p \in I} \sum_{q \in \overline{\{p\}}^{(1)} \cap Z_1} \langle \overline{u'_{p,q,0}}\rangle \eta^2 \, \chi^{\odd}(m_p m_{p,q} m'_{p,q,0}) \otimes (\overline{\pi'_{p,q,0}}^* \otimes \overline{\pi_{p,q}}^* \otimes \overline{\pi_p}^* \otimes \overline{g_1}^*)\]
	
	From Definition \ref{convisobis} and the notations in Subsection  \ref{not}, using the canonical isomorphism $\K{F}{-2} \simeq \W(F)$ (which sends $\eta^2$ to $1$), the ambient quadratic linking degree of $\mathscr{L}$ is equal to:
	\[\sum_{p \in I} \sum_{q \in \overline{\{p\}}^{(1)} \cap Z_1} \lambda'_{p,q,0} \langle \overline{u'_{p,q,0}}\rangle \, \chi^{\odd}(m_p m_{p,q} m'_{p,q,0})\]
\end{proof}

\section{Examples of computations of the quadratic linking degree}\label{seccompute}

In this section, we compute the quadratic linking class, the quadratic linking degree and the ambient quadratic linking degree (as well as their invariants) on examples. To do this we use the method given in Section \ref{sechowto}. See Section \ref{secdef} for definitions and notations.

\begin{example}{(Hopf)}\label{exHopf}
	We define the Hopf link over a perfect field $F$ as follows:
	\begin{itemize}
		\item $Z_1$ is the intersection of the closed subscheme of $\A^4_F = \Spec(F[x,y,z,t])$ of ideal $(x,y)$ and of $X := \As{4}{F}$;
		\item $\varphi_1 : \As{2}{F} \to X$ is the morphism associated to the morphism of $F$-algebras $F[x,y,z,t] \to F[u,v]$ which maps $x,y,z,t$ to $0,0,u,v$ respectively;
		\item $\overline{o_1}$ is the orientation class associated to the couple $(x,y)$ (i.e. the class of the isomorphism $o_1 : \nu_{Z_1} \to \Os_{Z_1} \otimes \Os_{Z_1}$ which maps $\overline{x}^* \wedge \overline{y}^*$ to $1 \otimes 1$);
		\item $Z_2$ is the intersection of the closed subscheme of $\A^4_F$ of ideal $(z,t)$ and of $X$;
		\item $\varphi_2 : \As{2}{F} \to X$ is the morphism associated to the morphism of $F$-algebras $F[x,y,z,t] \to F[u,v]$ which maps $x,y,z,t$ to $u,v,0,0$ respectively;
		\item $\overline{o_2}$ is the orientation class associated to the couple $(z,t)$ (i.e. the class of the isomorphism $o_2 : \nu_{Z_2} \to \Os_{Z_2} \otimes \Os_{Z_2}$ which maps $\overline{z}^* \wedge \overline{t}^*$ to $1 \otimes 1$).
	\end{itemize}

	In Table \ref{tabHopf} we give oriented fundamental cycles of $Z_1$ and $Z_2$, Seifert divisors of $Z_1$ (with orientation $o_1$) and $Z_2$ (with orientation $o_2$) relative to the link, their intersection product and its image by the boundary map $\partial : \CH{X \setminus Z}{2}{2} \to \CHMW{Z}{1}{0}{\nu_Z}$, which is the quadratic linking class (or rather we give a cycle which represents the quadratic linking class). Then we give cycles which represent $(\widetilde{o_1} \oplus \widetilde{o_2})(\Qlc_{\mathscr{L}})$, $(\varphi_1^* \oplus \varphi_2^*)((\widetilde{o_1} \oplus \widetilde{o_2})(\Qlc_{\mathscr{L}}))$, $(\partial \oplus \partial)((\varphi_1^* \oplus \varphi_2^*)((\widetilde{o_1} \oplus \widetilde{o_2})(\Qlc_{\mathscr{L}})))$ respectively and we give the quadratic linking degree (in $\W(F) \oplus \W(F)$). Finally, we give again the quadratic linking class, then a cycle which represents $(i_1)_*(\sigma_{1,\mathscr{L}})$, then a cycle which represents $\partial((i_1)_*(\sigma_{1,\mathscr{L}}))$ and then the ambient quadratic linking degree (in $\W(F)$). The points on which the cycles are supported are the obvious ones (for instance $\langle x\rangle \otimes \overline{y}^*$ is supported on the generic point of the hypersurface of $X \setminus Z$ of equation $y = 0$).
	
	Recall Theorems \ref{thmqlc}, \ref{thmqld} and \ref{thmaqld} (and their proofs) and note that the intersection of the hypersurfaces of $X \setminus Z$ of equations $y = 0$ and $t = 0$ is irreducible to get the results in Table \ref{tabHopf}.
	
	Note that the rank modulo $2$ of each component of the quadratic linking degree and of the ambient quadratic linking degree of the Hopf link is equal to $1$. Note that for every positive even integer $k$, the image by $\Sigma_k$ of each component of the quadratic linking degree and of the ambient quadratic linking degree of the Hopf link is equal to $0$. Note that if $F = \R$ then the absolute value of each component of the quadratic linking degree and of the ambient quadratic linking degree of the Hopf link is equal to $1$.
\end{example}

\begin{table} %Permet de mettre une légende (et un titre) en bas, de centrer, etc.
	\centering
	\renewcommand{\arraystretch}{1}
	\begin{tabular}{|l|ccc|}  %Tableau avec 4 colonnes, c = centrée, l = alignée à gauche, r = alignée à droite. | barre verticale.
		\hline %Barre horizontale.
		
		Oriented fundamental cycles
		& $\eta \otimes (\overline{x}^* \wedge \overline{y}^*)$
		& |
		& $\eta \otimes (\overline{z}^* \wedge \overline{t}^*)$ \\
		\hline 
		
		Seifert divisors
		& $\langle x\rangle \otimes \overline{y}^*$
		& |
		& $\langle z\rangle \otimes \overline{t}^*$ \\
		\hline 
		
		Apply intersection product
		& \multicolumn{3}{c|}{\, \, \, \, \, $\langle x z\rangle \otimes (\overline{t}^* \wedge \overline{y}^*)$} \\
		\hline
		
		Quadratic linking class
		& $-\langle z\rangle \eta \otimes (\overline{t}^* \wedge \overline{x}^* \wedge \overline{y}^*)$ 
		& $\oplus$ 
		& $\langle x\rangle \eta \otimes (\overline{y}^* \wedge \overline{z}^* \wedge \overline{t}^*)$ \\ 
		\hline
		
		Apply $\widetilde{o_1} \oplus \widetilde{o_2}$
		& $-\langle z\rangle \eta \otimes \overline{t}^*$ 
		& $\oplus$ 
		& $\langle x\rangle \eta \otimes \overline{y}^*$ \\ 
		\hline
		
		Apply $\varphi_1^* \oplus \varphi_2^*$ 
		& $-\langle u\rangle \eta \otimes \overline{v}^*$ 
		& $\oplus$ 
		& $\langle u\rangle \eta \otimes \overline{v}^*$ \\ 
		\hline
		
		Apply $\partial \oplus \partial$
		& $-\eta^2 \otimes (\overline{u}^* \wedge \overline{v}^*)$ 
		& $\oplus$ 
		& $\eta^2 \otimes (\overline{u}^* \wedge \overline{v}^*)$ \\ 
		\hline
		
		Quadratic linking degree
		& $-1$ 
		& $\oplus$ 
		& $1$ \\ 
		\hline
		
		Quadratic linking class
		& $-\langle z\rangle \eta \otimes (\overline{t}^* \wedge \overline{x}^* \wedge \overline{y}^*)$ 
		& $\oplus$ 
		& $\langle x\rangle \eta \otimes (\overline{y}^* \wedge \overline{z}^* \wedge \overline{t}^*)$ \\ 
		\hline
		
		Apply $(i_1)_*$ to the part supp. on $Z_1$
		& $-\langle z\rangle \eta \otimes (\overline{t}^* \wedge \overline{x}^* \wedge \overline{y}^*)$
		&
		& \\
		\hline
		
		Apply $\partial$
		& $- \eta^2 \otimes (\overline{x}^* \wedge \overline{y}^* \wedge \overline{z}^* \wedge \overline{t}^*)$
		&
		& \\
		\hline
		
		Ambient quadratic linking degree
		& $-1$
		&
		& \\
		\hline
			
	\end{tabular}
	\caption{The Hopf link}
	\label{tabHopf}
\end{table}

Let us now present examples where the intersection of the underlying divisors is not irreducible (and where the invariants of Corollaries \ref{strankmod2} and \ref{stinvR} and of Theorem \ref{stseriesofinvariants} have different values).

\begin{examples}{(Binary links)}\label{exbinary} 
	Let $F$ be a perfect field of characteristic different from $2$ and $a \in F^*$. We define the binary link $B_a$ over $F$ as follows:
	\begin{itemize}
		\item $Z_1$ is the intersection of the closed subscheme of $\A^4_F$ of ideal $(f_1 := t - ((1+a)x-y)y, g_1 := z - x(x-y))$ and of $X := \As{4}{F}$;
		\item $\varphi_1 : \As{2}{F} \to X$ is the morphism associated to the morphism of $F$-algebras $F[x,y,z,t] \to F[u,v]$ which maps $x,y,z,t$ to $u,v,u(u-v),((1+a)u-v)v$ respectively;
		\item $\overline{o_1}$ is the orientation class associated to the couple $(f_1,g_1)$;
		\item $Z_2$ is the intersection of the closed subscheme of $\A^4_F$ of ideal $(f_2 := t + ((1+a)x-y)y,g_2 := z + x(x-y))$ and of $X$;
		\item $\varphi_2 : \As{2}{F} \to X$ is the morphism associated to the morphism of $F$-algebras $F[x,y,z,t] \to F[u,v]$ which maps $x,y,z,t$ to $u,v,-u(u-v),-((1+a)u-v)v$ respectively;
		\item $\overline{o_2}$ is the orientation class associated to the couple $(f_2,g_2)$.
	\end{itemize}

	In Table \ref{tabSolomon} we give oriented fundamental cycles of $Z_1$ and $Z_2$, Seifert divisors of $Z_1$ (with orientation $o_1$) and $Z_2$ (with orientation $o_2$) relative to the link, their intersection product and its image by the boundary map $\partial : \CH{X \setminus Z}{2}{2} \to \CHMW{Z}{1}{0}{\nu_Z}$, which is the quadratic linking class. Then we give cycles which represent $(\widetilde{o_1} \oplus \widetilde{o_2})(\Qlc_{\mathscr{L}})$, $(\varphi_1^* \oplus \varphi_2^*)((\widetilde{o_1} \oplus \widetilde{o_2})(\Qlc_{\mathscr{L}}))$, $(\partial \oplus \partial)((\varphi_1^* \oplus \varphi_2^*)((\widetilde{o_1} \oplus \widetilde{o_2})(\Qlc_{\mathscr{L}})))$ respectively and we give the quadratic linking degree. Finally, we give again the quadratic linking class, then a cycle which represents $(i_1)_*(\sigma_{1,\mathscr{L}})$, then a cycle which represents $\partial((i_1)_*(\sigma_{1,\mathscr{L}}))$ and then the ambient quadratic linking degree. Unless specified (between parentheses after a central dot), the points on which the cycles are supported are the obvious ones (for instance $\langle f_1\rangle \otimes \overline{g_1}^*$ is supported on the generic point of the hypersurface of $X \setminus Z$ of equation $g_1 = 0$).
	
	To see how one gets from the fifth line in Table \ref{tabSolomon} to the sixth line in this Table, note that $-\langle f_2\rangle \eta \otimes \overline{g_2}^* \cdot (x-y) \in \CH{Z_1}{1}{0}$ is equal to $-\langle 2((1+a)x-y)y\rangle \eta \otimes \overline{2x(x-y)}^* \cdot (x-y)$ since in $Z_1$: $t = ((1+a)x-y)y$ and $z = x(x-y)$. Further note that $-\langle 2((1+a)x-y)y\rangle \eta \otimes \overline{2x(x-y)}^* \cdot (x-y) = -\langle ((1+a)x-y)yx\rangle \eta \otimes \overline{x-y}^* \cdot (x-y)$ and that $-\langle ((1+a)x-y)yx\rangle \eta \otimes \overline{x-y}^* \cdot (x-y) = -\langle ax^3\rangle \eta \otimes \overline{x-y}^* \cdot (x-y) = -\langle ax\rangle \eta \otimes \overline{x-y}^* \cdot (x-y)$. Similarly, $-\langle f_2\rangle \eta \otimes \overline{g_2}^* \cdot (x) \in \CH{Z_1}{1}{0}$ is equal to $-\langle y\rangle \eta \otimes \overline{x}^* \cdot (x)$. A similar reasoning gets one from the tenth line to the eleventh line.
	
	Note that the rank modulo $2$ of each component of the quadratic linking degree and of the ambient quadratic linking degree of the binary link $B_a$ is $0$ (hence the invariant presented in Corollary \ref{strankmod2} distinguishes between the Hopf link and the binary links). Note that the image by $\Sigma_2$ of each component of the quadratic linking degree and of the ambient quadratic linking degree of the binary link $B_a$ is $\langle a\rangle \in \W(F)/(1)$. For instance, if $F = \Q$, $\Sigma_2$ distinguishes between all the $B_p$ with $p$ prime numbers since if $p \neq q$ are prime numbers then $\disp \langle p\rangle \in \W(\Q)/(1)$ corresponds to $1 \in \W(\Z/p\Z) \subset \bigoplus_{r \text{ prime}} \W(\Z/r\Z)$ and $\disp \langle q\rangle \in \W(\Q)/(1)$ corresponds to $1 \in \W(\Z/q\Z) \subset \bigoplus_{r \text{ prime}} \W(\Z/r\Z)$ (via the isomorphism $\W(\Q)/(1) \simeq \bigoplus_{r \text{ prime}} \W(\Z/r\Z)$ induced by the isomorphism $\W(\Q) \simeq \W(\R) \oplus \bigoplus_{r \text{ prime}} \W(\Z/r\Z)$ which maps $\langle p_1 \dots p_n\rangle \in \W(\Q)$ (with $p_1,\dots,p_n$ distinct primes) to $\langle p_1 \dots p_n\rangle \oplus \bigoplus_{i=1}^n \langle \prod_{j \neq i} p_j\rangle \in \W(\R) \oplus \bigoplus_{i=1}^n \W(\Z/p_i\Z)$). Note that if $F = \R$ then the absolute value of each component of the quadratic linking degree and of the ambient quadratic linking degree of the binary link $B_a$ is equal to $2$ if $a > 0$, to $0$ if $a < 0$ (hence the invariant presented in Corollary \ref{stinvR} distinguishes between the Hopf link and the binary links, as well as between the binary links with positive parameter and the binary links with negative parameter).
\end{examples}

\begin{table} %Permet de mettre une légende (et un titre) en bas, de centrer, etc.
%	\hspace*{-0.3cm}
	\centering
	\renewcommand{\arraystretch}{1}
	\begin{tabular}{|l|ccc|}  %Tableau avec 4 colonnes, c = centrée, l = alignée à gauche, r = alignée à droite. | barre verticale.
		\hline %Barre horizontale.
		
		Or. fund. cyc.
		& $\eta \otimes (\overline{f_1}^* \wedge \overline{g_1}^*)$ 
		& |
		& $\eta \otimes (\overline{f_2}^* \wedge \overline{g_2}^*)$ \\
		\hline 
		
		Seifert divisors
		& $\langle f_1\rangle \otimes \overline{g_1}^*$
		& |
		& $\langle f_2\rangle \otimes \overline{g_2}^*$\\
		\hline 
		
		\multirow{2}{2cm}{Apply inter. prod.}
		& \multicolumn{3}{c|}{\phantom{+++}$\langle f_1 f_2\rangle \otimes (\overline{g_2}^* \wedge \overline{g_1}^*) \cdot (z,x-y)$} \\
		& \multicolumn{3}{c|}{$+ \langle f_1 f_2\rangle \otimes (\overline{g_2}^* \wedge \overline{g_1}^*) \cdot (z,x)$} \\
		\hline
		
		\multirow{2}{2cm}{Quad. link. class}
		& $-\langle f_2\rangle \eta \otimes (\overline{g_2}^* \wedge \overline{f_1}^* \wedge \overline{g_1}^*) \cdot (x-y)$
		& \multirow{2}{0.3cm}{$\oplus$} 
		& $\langle f_1\rangle \eta \otimes (\overline{g_1}^* \wedge \overline{f_2}^* \wedge \overline{g_2}^*) \cdot (x-y)$ \\ 
		
		& $-\langle f_2\rangle \eta \otimes (\overline{g_2}^* \wedge \overline{f_1}^* \wedge \overline{g_1}^*) \cdot (x) \phantom{++}$
		& & $+ \langle f_1\rangle \eta \otimes (\overline{g_1}^* \wedge \overline{f_2}^* \wedge \overline{g_2}^*) \cdot (x) \, \, \, \, \phantom{++}$ \\
		\hline
		
		\multirow{2}{1.5cm}{Apply $\widetilde{o_1} \oplus \widetilde{o_2}$}
		& $-\langle f_2\rangle \eta \otimes \overline{g_2}^* \cdot (x-y)$
		& \multirow{2}{0.3cm}{$\oplus$} 
		& $\phantom{+} \langle f_1\rangle \eta \otimes \overline{g_1}^* \cdot (x-y)$ \\
		
		& $-\langle f_2\rangle \eta \otimes \overline{g_2}^* \cdot (x) \phantom{++}$
		& & $+ \langle f_1\rangle \eta \otimes \overline{g_1}^* \cdot (x) \phantom{++}$ \\ 
		\hline
		
		\multirow{2}{1.5cm}{Apply $\varphi_1^* \oplus \varphi_2^*$}
		& $-\langle au\rangle \eta \otimes \overline{u-v}^* \cdot (u-v)$ 
		& \multirow{2}{0.3cm}{$\oplus$} 
		& $\phantom{+} \langle au\rangle \eta \otimes \overline{u-v}^* \cdot (u-v)$ \\ 
		
		& $-\langle v\rangle \eta \otimes \overline{u}^* \cdot (u) \phantom{+++}$ 
		& & $+ \langle v\rangle \eta \otimes \overline{u}^* \cdot (u) \phantom{+++}$ \\ 
		\hline
		
		Apply $\partial \oplus \partial$
		& $(1 + \langle a\rangle) \eta^2 \otimes (\overline{u}^* \wedge \overline{v}^*)$ 
		& $\oplus$ 
		& $-(1 + \langle a\rangle) \eta^2 \otimes (\overline{u}^* \wedge \overline{v}^*)$ \\ 
		\hline
		
		Quad. lk. deg.
		& $1 + \langle a\rangle$ 
		& $\oplus$ 
		& $-(1 + \langle a\rangle)$ \\ 
		\hline
		
		\multirow{2}{2cm}{Quad. link. class}
		& $-\langle f_2\rangle \eta \otimes (\overline{g_2}^* \wedge \overline{f_1}^* \wedge \overline{g_1}^*) \cdot (x-y)$
		& \multirow{2}{0.3cm}{$\oplus$} 
		& $\langle f_1\rangle \eta \otimes (\overline{g_1}^* \wedge \overline{f_2}^* \wedge \overline{g_2}^*) \cdot (x-y)$ \\ 
		
		& $-\langle f_2\rangle \eta \otimes (\overline{g_2}^* \wedge \overline{f_1}^* \wedge \overline{g_1}^*) \cdot (x) \phantom{++}$
		& & $+ \langle f_1\rangle \eta \otimes (\overline{g_1}^* \wedge \overline{f_2}^* \wedge \overline{g_2}^*) \cdot (x) \, \, \, \, \phantom{++}$ \\
		\hline
		
		\multirow{2}{2cm}{Apply $(i_1)_*$}
		& $-\langle f_2\rangle \eta \otimes (\overline{g_2}^* \wedge \overline{f_1}^* \wedge \overline{g_1}^*) \cdot (x-y)$ & & \\
		& $-\langle f_2\rangle \eta \otimes (\overline{g_2}^* \wedge \overline{f_1}^* \wedge \overline{g_1}^*) \cdot (x) \phantom{++}$ & & \\
		\hline
		
		\multirow{2}{2cm}{Apply $\partial$}
		& $-\langle a\rangle \eta^2 \otimes (\overline{x}^* \wedge \overline{y}^* \wedge \overline{z}^* \wedge \overline{t}^*)$ & & \\
		& $\phantom{aa} -\eta^2 \otimes (\overline{x}^* \wedge \overline{y}^* \wedge \overline{z}^* \wedge \overline{t}^*)$ & & \\
		\hline
		
		Ambient quad. lk. deg.
		& $-(1 + \langle a\rangle)$
		&
		& \\
		\hline
		
	\end{tabular}
	\caption{The binary link $B_a$}
	\label{tabSolomon}
\end{table}

The following family of examples is an analogue of the family of torus links $T(2,2n)$ (with $n \geq 1$ an integer) in knot theory. Note that $T(2,2)$ is the Hopf link and that its analogue below is slightly different from the Hopf link in the example above and has quadratic linking degree $(1,-1)$ and ambient quadratic linking degree $-1$. Note that $T(2,4)$ is the Solomon link.

\begin{examples}[Torus links]\label{extorus}
	Let $n \geq 1$. Let us define an analogue of the torus link $T(2,2n)$.
	
	Recall that (in knot theory) one of the components of $T(2,2n)$ is the intersection of $\{(a,b) \in \C^2, b = a^n\}$ with $\Sm^3_\varepsilon$, the $3$-sphere of radius $\varepsilon$, and that the other component of $T(2,2n)$ is the intersection of $\{(a,b) \in \C^2, b = - a^n\}$ with $\Sm^3_\varepsilon$ (for $\varepsilon > 0$ small enough). By writing $a = x + i y$ and $b = z + i t$ (with $x,y,z,t \in \R$), the equation $b = a^n$ becomes the system of equations
	\[\begin{cases}
	\disp t = \sum_{k=0}^{\lfloor \frac{n-1}{2} \rfloor} \binom{n}{2k+1} (-1)^k x^{n-2k-1} y^{2k+1} \\
		\disp z = \sum_{k=0}^{\lfloor \frac{n}{2} \rfloor} \binom{n}{2k} (-1)^k x^{n-2k} y^{2k}
	\end{cases}\]
	and the equation $b = - a^n$ becomes the system of equations
	\[\begin{cases}
	\disp t = -\sum_{k=0}^{\lfloor \frac{n-1}{2} \rfloor} \binom{n}{2k+1} (-1)^k x^{n-2k-1} y^{2k+1} \\
	\disp z = -\sum_{k=0}^{\lfloor \frac{n}{2} \rfloor} \binom{n}{2k} (-1)^k x^{n-2k} y^{2k}\\
	\end{cases}\] 
	From now on, we denote 
	\begin{align*}
	& \Sigma_t(x,y) := \sum_{k=0}^{\lfloor \frac{n-1}{2} \rfloor} \binom{n}{2k+1} (-1)^k x^{n-2k-1} y^{2k+1}, f_1 := t-\Sigma_t(x,y), f_2 := t+\Sigma_t(x,y), \\
	& \Sigma_z(x,y) := \sum_{k=0}^{\lfloor \frac{n}{2} \rfloor} \binom{n}{2k} (-1)^k x^{n-2k} y^{2k}, g_1 := z-\Sigma_z(x,y), g_2 := z+\Sigma_z(x,y)
	\end{align*}
	
	Consequently, we define our analogue over $\R$ of the torus link $T(2,2n)$ as follows:
	\begin{itemize}
		\item $Z_1$ is the intersection of the closed subscheme of $\A^4_\R$ of ideal $(f_1,g_1)$ and of $X := \As{4}{F}$;
		\item $\varphi_1 : \As{2}{\R} \to Z_1$ is the morphism associated to the morphism of $\R$-algebras $\R[x,y,z,t] \to \R[u,v]$ which maps $x,y,z,t$ to $u,v,\Sigma_z(u,v),\Sigma_t(u,v)$ respectively;
		\item $\overline{o_1}$ is the orientation class associated to the couple $(f_1,g_1)$;
		\item $Z_2$ is the intersection of the closed subscheme of $\A^4_\R$ of ideal $(f_2,g_2)$ and of $X$;
		\item $\varphi_2 : \As{2}{\R} \to Z_2$ is the morphism associated to the morphism of $\R$-algebras $\R[x,y,z,t] \to \R[u,v]$ which maps $x,y,z,t$ to $u,v,-\Sigma_z(u,v),-\Sigma_t(u,v)$ respectively;
		\item $\overline{o_2}$ is the orientation class associated to the couple $(f_2,g_2)$.
	\end{itemize}
	
%Görtz-Wedhorn AG I: Schemes, 2nd ed., Prop. 3.35 for Z_1\capZ_2=\emptyset
	
	An oriented fundamental cycle of $Z_1$ (with orientation $o_1$) is $\eta \otimes (\overline{f_1}^* \wedge \overline{g_1}^*)$ (over the generic point of $Z_1$) and a Seifert divisor of $Z_1$ (with orientation $o_1$) is $\langle f_1\rangle \otimes \overline{g_1}^*$ (over the generic point of the hypersurface of $X \setminus Z$ of equation $g_1 = 0$).
	
	An oriented fundamental cycle of $Z_2$ (with orientation $o_2$) is $\eta \otimes (\overline{f_2}^* \wedge \overline{g_2}^*)$ (over the generic point of $Z_2$) and a Seifert divisor of $Z_2$ (with orientation $o_2$) is $\langle f_2\rangle \otimes \overline{g_2}^*$ (over the generic point of the hypersurface of $X \setminus Z$ of equation $g_2 = 0$).
	
	The intersection of the underlying divisors has $n$ irreducible components, whose generic points are denoted by $P_0,\dots,P_{n-1}$, where for all $j \in \{0,\dots,n-1\}$, the component of generic point $P_j$ is given in $X \setminus Z$ by the equations
	\[z=0,x=\tan\left(\frac{(n-1-2j)\pi}{2n}\right)y\]
	Indeed, if we denote $x+iy = \rho e^{i\theta}$ with $\rho \in \R_+^*, \theta \in \R$ then:
	\begin{align*}
	\Re((x+iy)^n) = 0 & \Leftrightarrow \cos(n\theta) = 0 \\
	& \Leftrightarrow \theta = \frac{(2j+1)\pi}{2n} \text{ for some } j \in \{0,\dots,2n-1\} \\
	& \Leftrightarrow x = \tan\left(\frac{(n-1-2j)\pi}{2n}\right) y \text{ for some } j \in \{0,\dots,n-1\}
	\end{align*}
	
%For this last equivalence, visualize the usual orthogonal triangle (of sides of lengths $|x|$, $|y|$ and $\rho$) and take the tangent of the angle between the side of length $|y|$ and the hypotenuse. Also note that the tangent is $\pi$-periodic.
	
	From now on, for every $j \in \{0,\dots,n-1\}$, we denote $\disp \theta_j := \frac{(n-1-2j)\pi}{2n}$. Thus, the homogeneous polynomial $\Sigma_z(x,y)$ of degree $n$ is equal to $\disp \prod_{j=0}^{n-1} (x - \tan(\theta_j) y)$. Note that the $\tan(\theta_j)$, with $j \in \{0,\dots,n-1\}$, are distinct, since they are the roots of the polynomial $(x+i)^n + (x-i)^n$ (which is coprime with its derivative).
	
	It follows (see Section \ref{sechowto}) that the intersection product of these Seifert divisors is equal to:
	\[\sum_{j=0}^{n-1} (m_j)_\epsilon \langle f_1 f_2 u_j\rangle \otimes (\overline{\pi_j}^* \wedge \overline{g_1}^*) \cdot (P_j)\]
	where $\pi_j$ (resp. $u_j$) is a uniformizing parameter (resp. a unit) in $\Os_{X \setminus Z, P_j}/(g_1)$ and $m_j \in \Z$ such that $g_2 = u_j \pi_j^{m_j}$. Note that one can choose $\pi_j = g_2$ (hence $m_j = 1$ and $u_j = 1$) since $\disp \Os_{X \setminus Z, P_j}/(g_1) \simeq (\R[x,y,z,t]/(z-\prod_{i=0}^{n-1} (x - \tan(\theta_i) y)))_{(z,x - \tan(\theta_j) y)} \simeq \R[x,y,t]_{(x-\tan(\theta_j)y)}$ and in this ring $\disp g_2 = 2 \prod_{i=0}^{n-1}(x-\tan(\theta_i)y)$, thus the intersection product of these Seifert divisors is equal to:
	\[\sum_{j=0}^{n-1}\langle f_1 f_2\rangle \otimes (\overline{g_2}^* \wedge \overline{g_1}^*) \cdot (P_j)\]
	
	It follows (see Section \ref{sechowto}) that its image by the boundary map, which is the quadratic linking class, is the following:
	\begin{align*}
	\sum_{j=0}^{n-1} -\langle f_2\rangle \eta \otimes (\overline{g_2}^* \wedge \overline{f_1}^* \wedge \overline{g_1}^*) \cdot (x = \tan(\theta_j)y \text{ in } Z_1) \\ + \sum_{j=0}^{n-1} \langle f_1\rangle \eta \otimes (\overline{g_1}^* \wedge \overline{f_2}^* \wedge \overline{g_2}^*) \cdot (x = \tan(\theta_j)y \text{ in } Z_2)
	\end{align*}
	
	Its image by $\widetilde{o_1} \oplus \widetilde{o_2}$ is:
	\[\sum_{j=0}^{n-1} -\langle f_2\rangle \eta \otimes \overline{g_2}^* \oplus \sum_{j=0}^{n-1} \langle f_1\rangle \eta \otimes \overline{g_1}^*\]
	
	Its image by $\varphi_1^* \oplus \varphi_2^*$ is:
	\[\sum_{j=0}^{n-1} -\langle 2\Sigma_t(u,v)\rangle \eta \otimes \overline{2\Sigma_z(u,v)}^* \oplus \sum_{j=0}^{n-1} \langle -2\Sigma_t(u,v)\rangle \eta \otimes \overline{-2\Sigma_z(u,v)}^*\] 
	
	Note that the first component of the couple above is equal to:
	\[\sum_{j=0}^{n-1} \langle -\sum_{k=0}^{\lfloor \frac{n-1}{2} \rfloor} \binom{n}{2k+1} (-1)^k (\tan(\theta_j))^{n-2k-1} \prod_{i \neq j, i = 0}^{n-1} (\tan(\theta_j) - \tan(\theta_i)) v\rangle \eta \otimes \overline{u - \tan(\theta_j) v}^*\]
	
	Its image by the boundary map $\partial$ is the following:
	\begin{align*}
	\sum_{j=0}^{n-1} \langle -\sum_{k=0}^{\lfloor \frac{n-1}{2} \rfloor} \binom{n}{2k+1} (-1)^k (\tan(\theta_j))^{n-2k-1} \prod_{i \neq j, i = 0}^{n-1} (\tan(\theta_j) - \tan(\theta_i))\rangle \eta^2 \otimes (\overline{v}^* \wedge \overline{u - \tan(\theta_j) v}^*) \\
	= \sum_{j=0}^{n-1} \langle \sum_{k=0}^{\lfloor \frac{n-1}{2} \rfloor} \binom{n}{2k+1} (-1)^k (\tan(\theta_j))^{n-2k-1} \prod_{i \neq j, i = 0}^{n-1} (\tan(\theta_j) - \tan(\theta_i))\rangle \eta^2 \otimes (\overline{u}^* \wedge \overline{v}^*) 
	\end{align*}

	Note that $\disp \sum_{k=0}^{\lfloor \frac{n-1}{2} \rfloor} \binom{n}{2k+1} (-1)^k (\tan(\theta_j))^{n-2k-1} = \Im((\tan(\theta_j) + i)^n) = \rho_j \sin(\frac{(2j+1)\pi}{2})$ with $\rho_j$ a positive real number, hence:
	\[\langle \sum_{k=0}^{\lfloor \frac{n-1}{2} \rfloor} \binom{n}{2k+1} (-1)^k (\tan(\theta_j))^{n-2k-1}\rangle = \begin{cases}
	\langle 1\rangle & \text{ if } j \text{ is even} \\ \langle -1\rangle & \text{ if } j \text{ is odd}
	\end{cases}\]
	
	Note that for all $l \in \{0,\dots,n-1\}$, $- \frac{\pi}{2} < \theta_l < \frac{\pi}{2}$ hence for all $i < j$, $\tan(\theta_j) - \tan(\theta_i) < 0$ and for all $i > j$, $\tan(\theta_j) - \tan(\theta_i) > 0$, hence:
	\[\langle \prod_{i \neq j, i = 0}^{n-1} (\tan(\theta_j) - \tan(\theta_i))\rangle = \begin{cases}
	\langle 1\rangle & \text{ if } j \text{ is even} \\ \langle -1\rangle & \text{ if } j \text{ is odd}
	\end{cases}\]
	
	Therefore $\partial(\varphi_1^*(\widetilde{o_1}(\sigma_{1,\mathscr{L}}))) = n \, \eta^2 \otimes (\overline{u}^* \wedge \overline{v}^*)$, hence the first component of the quadratic linking degree is equal to $n \in \W(\R)$.
	
	With similar computations to the ones above, we find that the second component of the quadratic linking degree is equal to $-n \in \W(\R)$, hence the quadratic linking degree is equal to $(n,-n) \in \W(\R) \oplus \W(\R) \simeq \Z \oplus \Z$ and we find that the ambient quadratic linking degree is equal to $-n \in \W(\R) \simeq \Z$.
	
	Note that the rank modulo $2$ of each component of the quadratic linking degree and of the ambient quadratic linking degree of the analogue of $T(2,2n)$ is $1$ if $n$ is odd, $0$ if $n$ is even. Note that the absolute value of each component of the quadratic linking degree and of the ambient quadratic linking degree of the analogue of $T(2,2n)$ is equal to $n$, hence the invariant presented in Corollary \ref{stinvR} distinguishes between all these links $T(2,2n)$, similarly to the absolute value of the linking number which distinguishes between all the links $T(2,2n)$ in knot theory (recall that the linking number of $T(2,2n)$ is equal to $n$ in classical knot theory).
\end{examples}

\appendix

\section{An explicit definition of the residue morphisms of Milnor-Witt $K$-theory}\label{secresidue}

In this appendix, we give an explicit definition (i.e. one which allows computations) of the noncanonical residue morphism and prove that it is indeed the noncanonical residue morphism (as defined by Morel in \cite{morel} and recalled in Definition \ref{defmorel}). Note that explicit definitions of the canonical residue morphism (see Definition \ref{canonicalres}) and of the twisted canonical residue morphism (see Definition \ref{twistedcanonicalres}) follow directly. We use the case $n \leq 0$ in Theorem \ref{computeresidue} to compute the quadratic linking class and degree in Sections \ref{sechowto} and \ref{seccompute}; the case $n \geq 1$ is included for its usefulness in other computations.

See \cite[Section 3.1]{morel} for recollections about Milnor-Witt $K$-theory. Throughout this Appendix, $F$ is a perfect field, $v : F^* \to \Z$ is a discrete valuation (of residue field $\kappa(v)$ and ring $\Os_v$) and $\pi$ is a uniformizing parameter for $v$. For all $u \in \Os_v^*$, we denote by $\overline{u}$ its class in $\kappa(v)$ (which is in $\kappa(v)^*$ since $u \in \Os_v^*$). We denote the usual generators of the Milnor-Witt $K$-theory ring of $F$ by $[a] \in \K{F}{1}$ (with $a \in F^*$) and $\eta \in \K{F}{-1}$ (see \cite[Definition 3.1]{morel}). We denote $\langle a\rangle := 1 + \eta [a] \in \K{F}{0}$, $\epsilon := - \langle -1 \rangle$ and for all $n \in \N_0$, $n_\epsilon := \sum_{i=1}^n \langle (-1)^{i-1} \rangle$ and $(-n)_\epsilon := \epsilon \, n_\epsilon$. We denote by $\chi^{\odd} : \Z \to \{0,1\}$ the characteristic function of the set of odd numbers.

We now recall Morel's definition of the noncanonical residue morphism.

\begin{definition}[The noncanonical residue morphism]\label{defmorel}
	The residue morphism $\partial^{\pi}_v : \K{F}{*} \to \K{\kappa(v)}{*-1}$ is the only morphism of graded groups which commutes to product by $\eta$ and satisfies, for all $n \in \N_0, u_1,\dots,u_n \in \Os_v^*$:
	\[\partial^{\pi}_v([\pi,u_1,\dots,u_n]) = [\overline{u_1},\dots,\overline{u_n}] \text{ and } \partial^{\pi}_v([u_1,\dots,u_n]) = 0.\]
	(For $n=0$, this means $\partial^{\pi}_v([\pi]) = 1$ and $\partial^{\pi}_v(1) = 0$.)
\end{definition}

In \cite[Theorem 3.15]{morel}, Morel proves that such a morphism exists and that it is unique.

Before we define the canonical residue morphism, we recall the following facts and definition:

\begin{proposition}[Proposition 3.17 in \cite{morel}]\label{propresidue}
	\begin{equation*}
	\forall u \in \Os_v^*, \forall \alpha \in \K{F}{*}, \ \partial^{\pi}_v(\langle u \rangle \alpha) = \langle \overline{u} \rangle \partial^{\pi}_v(\alpha)
	\end{equation*}
\end{proposition}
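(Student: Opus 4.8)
The plan is to reduce the identity to a computation on a generating set of $\K{F}{*}$ and then invoke only the defining properties of $\partial^\pi_v$ (Definition \ref{defmorel}) together with the standard relations of Milnor--Witt $K$-theory. First I would note that both sides are additive in $\alpha$ and commute with left multiplication by $\eta$: on the left because $\langle u\rangle \in \K{F}{0}$ is central in $\K{F}{*}$ and $\partial^\pi_v$ commutes with $\eta$ by construction, on the right because $\langle\overline{u}\rangle$ is central in $\K{\kappa(v)}{*}$. Since $\K{F}{*}$ is generated as an abelian group by the elements $\eta^k[a_1,\dots,a_n]$ with $k \geq 0$ and $a_1,\dots,a_n \in F^*$, it then suffices to prove the identity for $\alpha = [a_1,\dots,a_n]$.

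Next, writing $\langle u\rangle = 1 + \eta[u]$ and $\langle\overline{u}\rangle = 1 + \eta[\overline{u}]$ and again using that $\partial^\pi_v$ commutes with $\eta$, the claim for such an $\alpha$ becomes equivalent to $\eta\,\partial^\pi_v([u]\alpha) = \eta\,[\overline{u}]\,\partial^\pi_v(\alpha)$. I would then write each $a_i = u_i\pi^{m_i}$ with $u_i \in \Os_v^*$ and $m_i \in \Z$ and, using the relation $[xy] = [x]+[y]+\eta[x][y]$ together with $[\pi^{m}] = m_\epsilon[\pi]$, rewrite $[a_i] = [u_i] + (m_i)_\epsilon\langle u_i\rangle[\pi]$. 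Expanding the products $[u]\prod_{i}([u_i] + (m_i)_\epsilon\langle u_i\rangle[\pi])$ and $\prod_{i}([u_i] + (m_i)_\epsilon\langle u_i\rangle[\pi])$, bringing every factor $[\pi]$ to the front by the graded-commutativity relation $[a][b] = \epsilon[b][a]$ (the resulting coefficients being central elements of $\K{F}{0}$), and using $[\pi]^2 = [\pi][-1]$ to remove repeated copies of $[\pi]$, each monomial becomes a central coefficient times either a symbol in units only --- which $\partial^\pi_v$ annihilates --- or $[\pi]$ times such a symbol, on which $\partial^\pi_v([\pi,w_1,\dots,w_r]) = [\overline{w_1},\dots,\overline{w_r}]$ applies. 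This produces explicit formulas for $\partial^\pi_v([u]\alpha)$ and for $\partial^\pi_v(\alpha)$.

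Finally I would multiply both formulas by $\eta$ and compare. Every term of $\eta\,\partial^\pi_v([u]\alpha)$ not already matching a term of $\eta\,[\overline{u}]\,\partial^\pi_v(\alpha)$ differs from a matching one by a multiple of $h = 1 + \langle -1\rangle$ (the discrepancies all come from comparing $[\overline{w}]$ with $\epsilon[\overline{w}]$, or a power of $\langle -1\rangle$ with $1$); since $\eta h = 0$, equivalently $\eta\langle -1\rangle = -\eta$, these contributions vanish and the two sides coincide. I expect the real obstacle to be the bookkeeping in the middle step: tracking the $\epsilon$'s and $\langle\pm1\rangle$'s produced while commuting the $[\pi]$'s through the expanded products, handling the central $\K{F}{0}$-coefficients (which forces a small induction on the number of symbols, since applying $\partial^\pi_v$ to $\langle b\rangle$ times a symbol already reproduces a special case of the identity being proved), and checking that every residual discrepancy is $\eta$-torsion. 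A more structural alternative would be to use the presentation of $\K{F}{n}$ as a fibre product over $K^M_n(F)$ and $I^n(F)$: on the Milnor $K$-theory factor $\langle u\rangle$ acts as the identity, so there is nothing to prove, while on the factor built from powers of the fundamental ideal the statement is the classical compatibility of the second residue homomorphism of Witt rings with multiplication by the forms $\langle u\rangle$ and $\langle\overline{u}\rangle$; this route avoids the combinatorics at the price of importing that structure theory.
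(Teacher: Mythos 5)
The paper does not prove this proposition: it is imported verbatim from Morel (Proposition 3.17 of \cite{morel}), so there is no internal proof to compare yours against. Your blind argument is a legitimate, self-contained alternative. Reducing to $\alpha = [a_1,\dots,a_n]$, rewriting the claim as $\eta\,\partial^{\pi}_v([u]\alpha) = \eta\,[\overline{u}]\,\partial^{\pi}_v(\alpha)$, expanding via $[u_i\pi^{m_i}] = [u_i] + (m_i)_\epsilon\langle u_i\rangle[\pi]$ and evaluating $\partial^{\pi}_v$ monomial by monomial is exactly the kind of computation the paper itself carries out in Theorem \ref{computeresidue} --- which, note, \emph{uses} the present proposition, so your proof must not (and does not) appeal to that theorem. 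Morel instead obtains the compatibility essentially formally from the way $\partial^{\pi}_v$ is constructed in the proof of his Theorem 3.15, as a component of a multiplicative map into an extension of $\K{\kappa(v)}{*}$; your route buys independence from that construction at the price of the combinatorics you describe.

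The one point to tighten is the treatment of the central $\K{F}{0}$-coefficients. An ``induction on the number of symbols'' is not obviously well-founded, because the monomials in your expansion can have \emph{more} symbols than $\alpha$ (the all-units term $[u][u_1]\cdots[u_n]$ already has $n+1$). The clean repair is to note that every element of $\K{F}{*}$ is a $\Z$-linear combination of monomials $\eta^k[b_1,\dots,b_r]$ in which each $b_i$ is $\pi$ or a unit and at most $b_1$ equals $\pi$ (the rewriting terminates because $[b][\pi] = -[\pi,b]-\eta[\pi,-1,b]$ and $[\pi]^2=[\pi,-1]$), and that for such a monomial $\beta$ the identity $\partial^{\pi}_v(\langle c\rangle\beta) = \langle\overline{c}\rangle\partial^{\pi}_v(\beta)$, $c \in \Os_v^*$, is a direct check from Definition \ref{defmorel}: write $\langle c\rangle\beta = \beta + \eta[c]\beta$ and $\epsilon$-commute $[c]$ past the single leading $[\pi]$, absorbing $\epsilon$ into $\eta$ via $\eta\epsilon=\eta$. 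By additivity this already yields the proposition and renders your final ``every discrepancy is a multiple of $h$'' bookkeeping unnecessary. Finally, the fibre-product alternative you sketch would require $\mathrm{char}(F)\neq 2$ and the nontrivial compatibility of the residues on $K^M_*$ and on the powers of the fundamental ideal with that presentation, so it imports at least as much structure as it saves.
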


\begin{corollary}\label{corresidue}
	If $\pi' = u' \pi$ with $u' \in \Os_v^*$ then $\partial^{\pi}_v = \langle \overline{u'}\rangle \partial^{\pi'}_v$. 
\end{corollary}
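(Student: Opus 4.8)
The plan is to reduce to the uniqueness statement for residue morphisms. Recall from \cite[Theorem 3.15]{morel} (and Definition \ref{defmorel}) that $\partial^{\pi}_v$ is the \emph{only} morphism of graded groups $\K{F}{*} \to \K{\kappa(v)}{*-1}$ which commutes with multiplication by $\eta$ and satisfies $\partial^{\pi}_v([\pi,u_1,\dots,u_n]) = [\overline{u_1},\dots,\overline{u_n}]$ and $\partial^{\pi}_v([u_1,\dots,u_n]) = 0$ for all $n \in \N_0$ and $u_1,\dots,u_n \in \Os_v^*$. So I would set $\partial' := \langle\overline{u'}\rangle\,\partial^{\pi'}_v$ (the composite of $\partial^{\pi'}_v$ with left multiplication by the degree-zero element $\langle\overline{u'}\rangle \in \K{\kappa(v)}{0}$; note $\pi'$ is a uniformizer since $u' \in \Os_v^*$) and check that $\partial'$ enjoys all of these properties. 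The corollary then follows immediately.

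The formal properties are cheap. Left multiplication by an element of $\K{\kappa(v)}{0}$ is additive and preserves the grading, so $\partial'$ is a morphism of graded groups of degree $-1$. It commutes with multiplication by $\eta$ because $\partial^{\pi'}_v$ does and because $\langle\overline{u'}\rangle$ commutes with $\eta$ in $\K{\kappa(v)}{*}$ (indeed $\langle a\rangle\eta = \eta + \eta^2[a] = \eta\langle a\rangle$, using that $\eta$ is central). Finally $\partial'([u_1,\dots,u_n]) = \langle\overline{u'}\rangle\cdot\partial^{\pi'}_v([u_1,\dots,u_n]) = \langle\overline{u'}\rangle\cdot 0 = 0$.

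The only computation to carry out is the value of $\partial'$ on a symbol $[\pi,u_1,\dots,u_n]$. Here I would use the standard relation $[ab] = [a] + \langle a\rangle[b]$ in Milnor--Witt $K$-theory with $a = u'$, $b = \pi$: since $\pi' = u'\pi$ this reads $[\pi'] = [u'] + \langle u'\rangle[\pi]$, hence $\langle u'\rangle[\pi] = [\pi'] - [u']$ in $\K{F}{1}$. Multiplying on the right by $[u_1]\cdots[u_n]$ gives
\[\langle u'\rangle\,[\pi,u_1,\dots,u_n] = [\pi',u_1,\dots,u_n] - [u',u_1,\dots,u_n] \in \K{F}{n+1}.\]
Applying $\partial^{\pi'}_v$ and using Definition \ref{defmorel} (with uniformizer $\pi'$ and the units $u',u_1,\dots,u_n$) yields
\[\partial^{\pi'}_v\big(\langle u'\rangle\,[\pi,u_1,\dots,u_n]\big) = [\overline{u_1},\dots,\overline{u_n}] - 0 = [\overline{u_1},\dots,\overline{u_n}].\]
On the other hand, Proposition \ref{propresidue} lets me pull the unit out on the left: $\partial^{\pi'}_v(\langle u'\rangle\,\alpha) = \langle\overline{u'}\rangle\,\partial^{\pi'}_v(\alpha)$. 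Comparing the two expressions gives $\langle\overline{u'}\rangle\,\partial^{\pi'}_v([\pi,u_1,\dots,u_n]) = [\overline{u_1},\dots,\overline{u_n}]$, i.e.\ $\partial'([\pi,u_1,\dots,u_n]) = [\overline{u_1},\dots,\overline{u_n}]$. The degenerate case $n = 0$ is the same argument (from $\langle u'\rangle[\pi] = [\pi'] - [u']$ one gets $\langle\overline{u'}\rangle\,\partial^{\pi'}_v([\pi]) = 1 - 0 = 1$, so $\partial'([\pi]) = 1$), and $\partial'(1) = \langle\overline{u'}\rangle\cdot 0 = 0$. Hence $\partial'$ satisfies the characterisation of $\partial^{\pi}_v$, so $\partial^{\pi}_v = \langle\overline{u'}\rangle\,\partial^{\pi'}_v$.

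There is no serious obstacle here; the only points requiring a little care are that Proposition \ref{propresidue} must be fed a genuine unit of $\Os_v$ (namely $u'$, which is exactly the hypothesis) and that the identity $[\pi'] = [u'] + \langle u'\rangle[\pi]$ in $\K{F}{*}$ correctly produces the twist by $\langle\overline{u'}\rangle$ appearing in the statement.
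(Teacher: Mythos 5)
Your proof is correct. The paper states this corollary without proof, and your argument --- checking that $\langle\overline{u'}\rangle\,\partial^{\pi'}_v$ satisfies the characterising properties of Definition \ref{defmorel} via the relation $[\pi'] = [u'] + \langle u'\rangle[\pi]$ together with Proposition \ref{propresidue}, then invoking the uniqueness from \cite[Theorem 3.15]{morel} --- is exactly the intended deduction.
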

%\begin{proof}
%	Note that $\langle \overline{u'}\rangle \partial^{\pi'}_v : \K{F}{*} \to \K{\kappa(v)}{*-1}$ is a morphism of graded groups which commutes to product by $\eta$, so by Morel's Theorem 3.15 (discussed above), it suffices to prove that for all $n \in \N_0, u_1,\dots,u_n \in \Os_v^*$, $\langle \overline{u'}\rangle \partial^{\pi'}_v([u_1,\dots,u_n]) = 0$ and $\langle \overline{u'}\rangle \partial^{\pi'}_v([\pi,u_1,\dots,u_n]) = [\overline{u_1},\dots,\overline{u_n}]$. Note that $\partial^{\pi'}_v([u_1,\dots,u_n]) = 0$ hence $\langle \overline{u'}\rangle \partial^{\pi'}_v([u_1,\dots,u_n]) = 0$.
%	\begin{align*}
%	\langle \overline{u'}\rangle \partial^{\pi'}_v([\pi,u_1,\dots,u_n]) &= \partial^{\pi'}_v( \langle u'\rangle [\pi,u_1,\dots,u_n]) \text{ by Prop. \ref{propresidue}} \\ &= \partial^{\pi'}_v((1+\eta [u']) [\pi] [u_1,\dots,u_n]) \\ &= \partial^{\pi'}_v((1+\eta [u']) [\pi] [u_1,\dots,u_n]) + \partial^{\pi'}_v([u',u_1,\dots,u_n]) \\ &= \partial^{\pi'}_v(([\pi] + \eta [u'] [\pi] + [u']) [u_1,\dots,u_n]) \\ &= \partial^{\pi'}_v([u'\pi] [u_1,\dots,u_n]) \\ &= \partial^{\pi'}_v([\pi',u_1,\dots,u_n]) \\ &= [\overline{u_1},\dots,\overline{u_n}]
%	\end{align*} 
%\end{proof}

\begin{definition}[Twisted Milnor-Witt $K$-theory]\label{deftwMW}
	Let $m \in \Z$ and $L$ be an $F$-vector space of dimension $1$. The $L$-twisted $m$-th Milnor-Witt $K$-theory abelian group of $F$, denoted $\KS{F}{m}{L}$, is the tensor product of the $\Z[F^*]$-modules $\K{F}{m}$ and $\Z[L \setminus \{0\}]$ (the scalar product of $\K{F}{m}$ being $(\sum_{f \in F^*} n_f \lambda_f) \cdot \alpha = \sum_{f \in F^*} n_f \langle f\rangle \alpha$).
\end{definition}

\begin{remark}
Note that if we fix an isomorphism between $L$ and $F$ then we get an isomorphism of $\Z[F^*]$-modules between $\KS{F}{m}{L}$ and $\K{F}{m}$; nevertheless, $\KS{F}{m}{L}$ is a useful construction because there is no canonical isomorphism between $L$ and $F$ (hence no canonical isomorphism between $\KS{F}{m}{L}$ and $\K{F}{m}$, unless $L = F$) and the introduction of $\KS{F}{m}{L}$ is what allows us to have canonical residue morphisms.
\end{remark}

\begin{definition}[The canonical residue morphism]\label{canonicalres}
	The canonical residue morphism $\partial_v : \K{F}{*} \to \KS{\kappa(v)}{*-1}{(\mathfrak{m}_v/\mathfrak{m}_v^2)^{\vee}}$ (where $\vee$ denotes the dual) is given by $\partial_v = \partial^{\pi}_v \otimes \overline{\pi}^*$ (with $\overline{\pi}$ the class of $\pi$ in $\mathfrak{m}_v/\mathfrak{m}_v^2$ (which is nonzero since $\pi$ is a uniformizing parameter for $v$) and $\overline{\pi}^*$ its dual basis).
\end{definition}

Note that $\partial_v$ does not depend on the choice of $\pi$, since if $\pi'$ is another uniformizing parameter for $v$ then there exists $u' \in \Os_v$ such that $\pi' = u' \pi$ hence, by Corollary \ref{corresidue} , $\partial^{\pi}_v \otimes \overline{\pi}^* = \langle \overline{u'}\rangle \partial^{\pi'}_v \otimes \overline{\pi}^* = \partial^{\pi'}_v \otimes \overline{u'\pi}^* = \partial^{\pi'}_v \otimes \overline{\pi'}^*$.

\begin{definition}[The twisted canonical residue morphism]\label{twistedcanonicalres}
	Let $L$ be a rank one $\Os_v$-module. The twisted canonical residue morphism $\partial_{v,L} : \KS{F}{*}{L \otimes_{\Os_v} F} \to \KS{\kappa(v)}{*-1}{(\mathfrak{m}_v/\mathfrak{m}_v^2)^{\vee} \otimes_{\kappa(v)} (L \otimes_{\Os_v} \kappa(v))}$
	is the morphism of graded groups which satisfies for all $\alpha \in \K{F}{*}$ and $l \in L$:
	\[\partial_{v,L}(\alpha \otimes (l \otimes 1)) = \partial^{\pi}_v(\alpha) \otimes (\overline{\pi}^* \otimes (l \otimes 1))\]
\end{definition}

Before we prove Theorem \ref{computeresidue}, we recall the following facts.

\begin{lemma}\label{epslemma}
	For all $m,n \in \Z$, $(mn)_{\epsilon} = m_{\epsilon}n_{\epsilon}$.
\end{lemma}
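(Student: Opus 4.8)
The plan is to reduce everything to the case $m,n \geq 0$, settle that case by a short parity computation in the ring $\K{F}{0}$, and then bootstrap to negative arguments via the defining relation $(-k)_\epsilon = \epsilon\,k_\epsilon$. The only facts about $\K{F}{0}$ I would invoke are the standard identities $\langle 1\rangle = 1$ and $\langle a\rangle\langle b\rangle = \langle ab\rangle$ (so $\epsilon^2 = \langle -1\rangle^2 = \langle 1\rangle = 1$) and the commutativity of $\K{F}{0}$.

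First I would introduce $h := \langle 1\rangle + \langle -1\rangle \in \K{F}{0}$ and record $h\langle 1\rangle = h\langle -1\rangle = h$, whence $h^2 = h\langle 1\rangle + h\langle -1\rangle = 2h$. Grouping the summands of $n_\epsilon = \sum_{i=1}^{n}\langle(-1)^{i-1}\rangle$ in consecutive pairs yields the closed form: writing $n = 2a+b$ with $b \in \{0,1\}$, one has $n_\epsilon = a\,h + b$ (with $b$ standing for $b\langle 1\rangle$). Then for $m = 2a+b$ and $n = 2c+d$ with $b,d \in \{0,1\}$,
\[ m_\epsilon\,n_\epsilon = (ah+b)(ch+d) = ac\,h^2 + (ad+bc)\,h + bd = (2ac+ad+bc)\,h + bd, \]
and since $mn = (2a+b)(2c+d) = 2(2ac+ad+bc) + bd$ with $bd \in \{0,1\}$, the same closed form gives $(mn)_\epsilon = (2ac+ad+bc)\,h + bd = m_\epsilon n_\epsilon$. (One could instead run an induction using the twisted additivity $(m+n)_\epsilon = m_\epsilon + \langle(-1)^m\rangle n_\epsilon$, but the closed form is more direct.)

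For the general case I would use $(-k)_\epsilon = \epsilon\,k_\epsilon$ for $k \geq 0$, that $\epsilon^2 = 1$, and that $\epsilon$ is central: if $m \geq 0 \geq n$ then $m_\epsilon n_\epsilon = \epsilon\, m_\epsilon(-n)_\epsilon = \epsilon\,(m(-n))_\epsilon = \epsilon\,(-mn)_\epsilon = (mn)_\epsilon$, the last equality holding because $mn \leq 0$; the case $m \leq 0 \leq n$ is symmetric; if $m,n \leq 0$ then $m_\epsilon n_\epsilon = \epsilon^2 (-m)_\epsilon(-n)_\epsilon = ((-m)(-n))_\epsilon = (mn)_\epsilon$; and if $m$ or $n$ is $0$ both sides vanish since $0_\epsilon = 0$. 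I do not expect a genuine obstacle here, as the lemma is elementary; the only point needing care is the bookkeeping — the closed form for $n_\epsilon$ and, in the sign reductions, tracking the sign of each product so that the correct branch of the definition of $(\cdot)_\epsilon$ is applied.
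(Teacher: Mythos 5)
Your proof is correct. The paper states this lemma without proof (it is recalled as a standard fact about $\GW(F)\simeq\K{F}{0}$), so there is no argument to compare against; your closed form $n_\epsilon = a\,h + b$ for $n = 2a+b$ with $h=\langle 1\rangle+\langle -1\rangle$ and $h^2=2h$, together with the reduction of the signed cases via $(-k)_\epsilon=\epsilon\,k_\epsilon$ and $\epsilon^2=1$, is a complete and clean verification.
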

%\begin{proof}
%	Let $m,n > 0$. 
%	\begin{align*}
%	m_{\epsilon}n_{\epsilon} &= \sum_{i=1}^{m} \langle (-1)^{i-1}\rangle \sum_{j=1}^{n} \langle (-1)^{j-1}\rangle \\
%	&= \sum_{i=1}^{m} \sum_{j=1}^{n} \langle (-1)^{i+j-2}\rangle \text{ (see  \cite[Lemma 3.5]{morel})} \\
%	&= \sum_{k=1}^{mn} \langle (-1)^{k-1}\rangle \begin{matrix}
%	(\text{in both cases there are $mn$ terms alternating} \\ \text{ between $\langle 1\rangle$ and $\langle -1\rangle$ and starting with $\langle 1\rangle$})
%	\end{matrix}\\
%	&= (mn)_{\epsilon}
%	\end{align*}
%	If $m < 0$ and $n > 0$ then $(mn)_{\epsilon} = - \langle -1\rangle (-mn)_{\epsilon} = - \langle -1\rangle (-m)_{\epsilon} n_{\epsilon} = m_{\epsilon}n_{\epsilon}$ (and similarly for other signs) and if $m = 0$ or $n = 0$ then $(mn)_{\epsilon} = 0 = m_{\epsilon} n_{\epsilon}$. 
%\end{proof}

\begin{lemma}\label{chilemma}
	For all $m \in \Z$, $\eta m_{\epsilon} = \eta \chi^{\odd}(m)$.
\end{lemma}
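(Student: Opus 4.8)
The plan is to reduce the statement to the single Milnor--Witt relation $\eta\,(1 + \langle -1\rangle) = 0$ in $\K{F}{-1}$ (see \cite[Section 3.1]{morel}; here $1 + \langle -1\rangle$ is the hyperbolic element). I would rewrite this relation as $\langle -1\rangle\,\eta = -\eta$, and combine it with $\langle 1\rangle = 1$ to obtain $\langle(-1)^j\rangle\,\eta = (-1)^j\,\eta$ for every integer $j$. This identity is the only non-formal input; everything else is bookkeeping over a finite alternating sum.

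First I would dispose of the case $m \geq 0$. Expanding the definition $m_\epsilon = \sum_{i=1}^m \langle(-1)^{i-1}\rangle$ and using the identity just recalled together with the centrality of $\eta$,
\[
\eta\, m_\epsilon \;=\; \sum_{i=1}^m \langle(-1)^{i-1}\rangle\,\eta \;=\; \Bigl(\sum_{i=1}^m (-1)^{i-1}\Bigr)\eta .
\]
The alternating sum $\sum_{i=1}^m (-1)^{i-1}$ equals $1$ when $m$ is odd and $0$ when $m$ is even, i.e. it equals $\chi^{\odd}(m)$; hence $\eta\, m_\epsilon = \chi^{\odd}(m)\,\eta = \eta\,\chi^{\odd}(m)$.

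Next I would handle $m < 0$ by writing $m = -n$ with $n > 0$ and invoking the definition $m_\epsilon = (-n)_\epsilon = \epsilon\, n_\epsilon$ with $\epsilon = -\langle -1\rangle$. Then $\eta\, m_\epsilon = -\langle -1\rangle\,\eta\, n_\epsilon = \eta\, n_\epsilon$, once more by $\langle -1\rangle\,\eta = -\eta$ (equivalently $\epsilon\,\eta = \eta$), so the case $m \geq 0$ already treated gives $\eta\, m_\epsilon = \eta\,\chi^{\odd}(n) = \eta\,\chi^{\odd}(m)$ since $\chi^{\odd}(n) = \chi^{\odd}(-n)$. This completes the argument.

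I do not anticipate a genuine obstacle: the whole content is concentrated in the relation $\eta\,(1+\langle -1\rangle) = 0$ (equivalently $\langle -1\rangle\,\eta = -\eta$, equivalently $\epsilon\,\eta = \eta$), and the remainder is the elementary evaluation of $\sum_{i=1}^m(-1)^{i-1}$ together with the reduction from negative to nonnegative $m$.
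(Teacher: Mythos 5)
Your proof is correct. The paper states Lemma \ref{chilemma} without any proof (it is recalled as a known fact), and your argument is exactly the standard verification one would supply: the only substantive input is the Milnor--Witt relation $\eta\,(1+\langle -1\rangle)=0$, which gives $\langle(-1)^{j}\rangle\,\eta=(-1)^{j}\eta$, after which the claim reduces to evaluating the alternating sum $\sum_{i=1}^{m}(-1)^{i-1}=\chi^{\odd}(m)$ for $m\geq 0$ and using $\epsilon\,\eta=\eta$ to handle negative $m$ via the definition $(-n)_\epsilon=\epsilon\, n_\epsilon$. All steps, including the commutation of $\eta$ with elements of $\K{F}{0}$, are justified.
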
	
%\begin{proof}
%	Note that $\eta \langle -1\rangle = -\eta$ since $\eta (1 + \langle -1\rangle) = 0$. It follows that if $m > 0$ then $\eta m_{\epsilon}$ is a sum of $m$ terms alternating between $\eta$ and $-\eta$ and starting with $\eta$ hence is equal to $\eta \chi^{\odd}(m)$. Recall that if $m < 0$ then $m_{\epsilon} = - \langle -1\rangle (-m)_{\epsilon}$; it follows from this and the fact that $\eta (-m)_{\epsilon} = \eta \chi^{\odd}(m)$ that $\eta m_{\epsilon} = - \langle -1\rangle \eta \chi^{\odd}(m) = \eta \chi^{\odd}(m)$. Finally, if $m = 0$ then $\eta m_{\epsilon} = 0 = \eta \chi^{\odd}(m)$.
%\end{proof}

Recall that by \cite[Lemma 3.6]{morel}, for all $n \leq 0$, $\K{F}{n}$ is generated by elements of the form $\langle \pi^{m}u\rangle \eta^{-n}$ with $m \in \Z$ and $u \in \Os_v^*$, hence, since $\partial^{\pi}_v : \K{F}{n} \to \K{\kappa(v)}{n-1}$ is a group morphism (see Definition \ref{defmorel}), we only need to give $\partial^{\pi}_v(\langle \pi^{m}u\rangle \eta^{-n})$.

Recall that by \cite[Lemma 3.6]{morel}, for all $n \geq 1$, $\K{F}{n}$ is generated by elements of the form $[\pi^{m_1}u_1,\dots,\pi^{m_n}u_n]$ with $m_1,\dots,m_n \in \Z$ and $u_1,\dots,u_n \in \Os_v^*$, hence, since $\partial^{\pi}_v : \K{F}{n} \to \K{\kappa(v)}{n-1}$ is a group morphism (see Definition \ref{defmorel}), we only need to give $\partial^{\pi}_v([\pi^{m_1}u_1,\dots,\pi^{m_n}u_n])$.

\begin{theorem}\label{computeresidue}
	For all $n \leq 0$, $m \in \Z$ and $u \in \Os_v^*$:
	\[\partial^{\pi}_v(\langle \pi^{m}u\rangle \eta^{-n}) = \langle \overline{u}\rangle \eta^{-n+1} \chi^{\odd}(m)\]
	
	For all $n \geq 1$, $m_1,\dots,m_n \in \Z$ and $u_1,\dots,u_n \in \Os_v^*$:
	\begin{align*}
	& \partial^{\pi}_v([\pi^{m_1}u_1,\dots,\pi^{m_n}u_n]) = \\ & \sum_{l=0}^{n-1} \sum_{\substack{J \subset \{1,\dots,n\}, |J|=l \\ J = \{j_1 < \dots < j_l\}}} ((-1)^{\sum_{i=1}^l n-l+i-j_i}\prod_{k \in \{1,\dots,n\} \setminus J} m_k)_{\epsilon} [\underset{n-1-l \text{ terms}}{\underbrace{-1,\dots,-1}},\overline{u_{j_1}},\dots,\overline{u_{j_l}}]\\
	& + \sum_{p=1}^n \sum_{l=p}^n \sum_{\substack{J \subset \{1,\dots,n\}, |J|=l \\ J = \{j_1 < \dots < j_l\}}} (\sum_{I \subset \{1,\dots,l\}, |I|=p} \eta^p \chi^{\odd}(\prod_{i \in I} m_{j_i} \times \prod_{k \in \{1,\dots,n\} \setminus J} m_k)) [\underset{n-1+p-l \text{ terms}}{\underbrace{-1,\dots,-1}},\overline{u_{j_1}},\dots,\overline{u_{j_l}}] 
	\end{align*}
\end{theorem}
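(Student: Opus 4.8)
The plan is to verify both displayed formulas by direct computation, using only the defining properties of $\partial^{\pi}_v$ (Definition \ref{defmorel}), Proposition \ref{propresidue}, and Lemmas \ref{epslemma} and \ref{chilemma}. For the case $n \leq 0$ I would first write $\langle \pi^m u\rangle = \langle u\rangle\langle\pi^m\rangle$ and observe, using $\langle ab^2\rangle = \langle a\rangle$, that $\langle\pi^m\rangle$ equals $1$ when $m$ is even and $\langle\pi\rangle = 1 + \eta[\pi]$ when $m$ is odd. Hence $\langle\pi^m u\rangle\eta^{-n}$ equals $\langle u\rangle\eta^{-n} = \eta^{-n} + \eta^{-n+1}[u]$ when $m$ is even and $\langle u\rangle(\eta^{-n} + \eta^{-n+1}[\pi])$ when $m$ is odd. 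Applying $\partial^{\pi}_v$ — which is $\eta$-linear, satisfies $\partial^{\pi}_v(\langle u\rangle\alpha) = \langle\overline u\rangle\partial^{\pi}_v(\alpha)$ by Proposition \ref{propresidue}, and has $\partial^{\pi}_v(1) = \partial^{\pi}_v([u]) = 0$ and $\partial^{\pi}_v([\pi]) = 1$ — then gives $0$ in the even case and $\langle\overline u\rangle\eta^{-n+1}$ in the odd case, that is, $\langle\overline u\rangle\eta^{-n+1}\chi^{\odd}(m)$ in both cases.

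For $n \geq 1$ the strategy is to expand $[\pi^{m_1}u_1,\dots,\pi^{m_n}u_n]$ into a combination of symbols on which $\partial^{\pi}_v$ is already known, then collect. Recall from \cite{morel} that $[ab] = [a] + [b] + \eta[a][b]$, and that $[a^m] = (m)_\epsilon[a]$ (by a short induction from the previous identity together with $[a]^2 = \epsilon[a][-1]$ and $\eta\epsilon = \eta$); combining these with Lemma \ref{chilemma} gives $[\pi^{m_i}u_i] = (m_i)_\epsilon[\pi] + [u_i] + \eta\,\chi^{\odd}(m_i)\,[\pi][u_i]$. Multiplying the $n$ such expressions (elements of $\GW(F)$ being central) produces one monomial for each choice, for each index $i$, of one of these three summands. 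For a monomial, let $J = \{j_1 < \dots < j_l\}$ collect the indices whose chosen summand contributes a factor $[u_j]$, and let $I \subseteq J$ with $|I| = p$ collect those that chose the third summand; such a monomial then carries $\eta^p$, the scalar $\prod_{k\notin J}(m_k)_\epsilon = (\prod_{k\notin J}m_k)_\epsilon$ (Lemma \ref{epslemma}), the $\chi^{\odd}$-factors coming from $I$, and an interleaved product of $n-l+p$ copies of $[\pi]$ with the symbols $[u_{j_1}],\dots,[u_{j_l}]$.

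Next I would, in each monomial, move the unit symbols to the right past all the $[\pi]$'s using $[a][b] = \langle -1\rangle[b][a]$ — the number of transpositions required is $\sum_{i=1}^{l}(n - l + i - j_i)$ — and then collapse $[\pi]^{\,n-l+p}$ via $[\pi]^2 = \epsilon[\pi][-1]$ into $[\pi]$ followed by $n-l+p-1$ copies of $[-1]$. Applying $\partial^{\pi}_v$ termwise then kills the unique $[\pi]$-free monomial (the one with $J = \{1,\dots,n\}$, which is $[u_1,\dots,u_n]$) and sends every other monomial, now of the form $\eta^p\cdot(\text{scalar})\cdot[\pi,-1,\dots,-1,u_{j_1},\dots,u_{j_l}]$, to $\eta^p\cdot(\text{scalar})\cdot[\,\underbrace{-1,\dots,-1}_{n-1+p-l},\overline{u_{j_1}},\dots,\overline{u_{j_l}}\,]$. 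Summing the monomials with a fixed pair $(J,p)$ over the admissible $I$, and using Lemma \ref{epslemma}, the identity $\eta\epsilon = \eta$ (so that any $\eta$ present absorbs the $\epsilon$'s produced by collapsing), and Lemma \ref{chilemma}, should reproduce exactly the two sums in the statement: the coefficients $\big((-1)^{\sum_{i=1}^{l}(n-l+i-j_i)}\prod_{k\notin J}m_k\big)_\epsilon$ when $p = 0$, and the coefficients $\sum_{I}\eta^p\chi^{\odd}(\cdots)$ when $p \geq 1$. I expect the main obstacle to be purely the sign and index bookkeeping: checking that the transposition count, the powers of $\epsilon$ from the collapsing, and the $\GW(F)$-level simplifications combine to give precisely the exponent $\sum_{i=1}^{l}(n-l+i-j_i)$ of $-1$ inside $(\cdots)_\epsilon$ when $p = 0$, precisely $n-1-l$ (resp. $n-1+p-l$) copies of $-1$, and no sign at all in the $p \geq 1$ terms. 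Carrying out the computation by induction on $n$ (peeling off the last factor $[\pi^{m_n}u_n]$ and reindexing) is likely the cleanest way to keep this under control.
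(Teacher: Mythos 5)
Your proposal is correct and follows essentially the same route as the paper: for $n\leq 0$ you expand $\langle \pi^m u\rangle\eta^{-n}$ and apply the defining relations of $\partial^{\pi}_v$ together with Proposition \ref{propresidue} (the paper carries $m_\epsilon$ and invokes Lemma \ref{chilemma} at the end rather than splitting by the parity of $m$, but this is the same computation), and for $n\geq 1$ you perform the identical expansion $\prod_{i}\left((m_i)_\epsilon[\pi]+[u_i]+\eta(m_i)_\epsilon[\pi,u_i]\right)$, index the monomials by $J$ and $I$, reorder by $\epsilon$-graded commutativity with the same transposition count $\sum_{i=1}^{l}(n-l+i-j_i)$, collapse the repeated $[\pi]$'s into $[\pi,-1,\dots,-1]$, and apply $\partial^{\pi}_v$ termwise, killing only the $[\pi]$-free term $[u_1,\dots,u_n]$. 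The only differences are organizational (e.g.\ your suggestion to run an induction on $n$, where the paper argues directly), so the argument matches the paper's proof.
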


\begin{remark} This last formula may seem daunting, but for $n = 1$ it is merely 
	\[\disp \partial^{\pi}_v([\pi^mu]) = m_{\epsilon} + \eta \chi^ {\odd}(m) [\overline{u}]\]
	(i.e. $\partial^{\pi}_v([\pi^mu]) = \langle \overline{u}\rangle m_{\epsilon}$, similarly to the case $n \leq 0$ where $\partial^{\pi}_v(\langle \pi^mu\rangle \eta^{-n}) = \langle \overline{u}\rangle \eta^{-n+1} m_{\epsilon}$, see Lemma \ref{chilemma}),
	for $n = 2$ it is merely 
	\begin{align*}
	\disp \partial^{\pi}_v([\pi^{m_1}u_1,\pi^{m_2}u_2]) & = (m_1m_2)_{\epsilon} [-1] + (-m_2)_{\epsilon} [\overline{u_1}] + (m_1)_{\epsilon} [\overline{u_2}] \\
	& + \eta \chi^{\odd}(m_1m_2) [-1,\overline{u_1}] + \eta \chi^{\odd}(m_1m_2) [-1,\overline{u_2}] \\ 
	& + (\eta \chi^{\odd}(m_1) + \eta \chi^{\odd}(m_2)) [\overline{u_1},\overline{u_2}] \\
	& + \eta^2 \chi^{\odd}(m_1m_2) [-1,\overline{u_1},\overline{u_2}]
	\end{align*}
	and so on.
\end{remark}

\begin{proof}
	Let $n \leq 0$, $m \in \Z$ and $u \in \Os_v^*$.  
	\begin{align*}
	\partial^{\pi}_v(\langle \pi^{m}u\rangle \eta^{-n}) & = \partial^{\pi}_v((1 + \eta [\pi^{m}u]) \eta^{-n}) & \\ & = \partial^{\pi}_v((1 + \eta ([\pi^{m}] + [u] + \eta [\pi^{m},u])) \eta^{-n}) & \\
	& = \partial^{\pi}_v((1 + \eta m_{\epsilon} [\pi] + \eta [u] + \eta^2 m_{\epsilon} [\pi,u])\eta^{-n}) & \text{ by \cite[Lemma 3.14]{morel}}\\ 
	& = \eta^{-n} \partial^{\pi}_v(1) + \eta^{-n+1} m_{\epsilon}\partial^{\pi}_v([\pi]) \\
	& \ \ \, + \eta^{-n+1} \partial^{\pi}_v([u]) + \eta^{-n+2} m_{\epsilon} \partial^{\pi}_v([\pi,u]) & \text{ by Prop. \ref{propresidue} and Def. \ref{defmorel}}\\ 
	& = \eta^{-n+1}m_{\epsilon} + \eta^{-n+2}m_{\epsilon}[\overline{u}] & \text{ by Def. \ref{defmorel}}\\ 
	& = (\eta^{-n+1}+\eta^{-n+2}[\overline{u}])\chi^{\odd}(m) & \text{ by Lemma \ref{chilemma}}\\
	& = \langle \overline{u}\rangle \eta^{-n+1} \chi^{\odd}(m) &
	\end{align*} 
	
	Let $n \geq 1, m_1,\dots,m_n \in \Z$, $u_1,\dots,u_n \in \Os_v^*$ and $N := \{1,\dots,n\}$.
	\begin{align*}
	[\pi^{m_1} u_1,\dots,\pi^{m_n}u_n] & = \prod_{i=1}^n ([\pi^{m_i}] + [u_i] + \eta [\pi^{m_i},u_i])\\ & = \prod_{i=1}^n ((m_i)_{\epsilon} [\pi] + [u_i] + \eta (m_i)_{\epsilon} [\pi,u_i]) \text{ by \cite[Lemma 3.14]{morel}}
	\end{align*}
	\begin{align*}
	& \text{ Hence } [\pi^{m_1} u_1,\dots,\pi^{m_n}u_n] = \sum_{l=0}^n \sum_{\substack{J \subset \{1,\dots,n\}, |J|=l \\ J = \{j_1 < \dots < j_l\}}} \prod_{k \in N \setminus J} (m_k)_{\epsilon} \times \epsilon^{\sum_{i=1}^l n-l+i-j_i} [\pi,\dots,\pi,u_{j_1},\dots,u_{j_l}] \\ & \phantom{b} + \sum_{p=1}^n \sum_{l=p}^n \sum_{\substack{J \subset \{1,\dots,n\}, |J|=l \\ J = \{j_1 < \dots < j_l\}}} (\sum_{I \subset \{1,\dots,l\}, |I| = p} \eta^p \times \prod_{i \in I} (m_{j_i})_{\epsilon} \times \prod_{k \in N \setminus J} (m_k)_{\epsilon}) [\pi,\dots,\pi,u_{j_1},\dots,u_{j_l}]
	\end{align*}
	
	We obtained this last equality by developing the product and using  \cite[Corollary 3.8]{morel} ($\epsilon$-graded commutativity), as well as the fact that $\eta \epsilon = \eta$.
	
	The index $p$ corresponds to the number of terms coming from an $\eta (m_i)_{\epsilon} [\pi,u_i]$, the index $l$ corresponds to the number of terms coming from a $[u_i]$ or an $\eta (m_i)_{\epsilon} [\pi,u_i]$, the set $J = \{j_1,\dots,j_l\}$ (with $j_1 < \dots < j_l$) corresponds to the indices of the terms coming from a $[u_i]$ or an $\eta (m_i)_{\epsilon} [\pi,u_i]$ and the set $I$ corresponds to the indices of the $j_i$ such that $u_{j_i}$ comes from an $\eta (m_{j_i})_{\epsilon} [\pi,u_{j_i}]$.
	
	By \cite[Lemma 3.7]{morel} and Lemma \ref{epslemma}: 
	\begin{align*}
	& [\pi^{m_1}u_1,\dots,\pi^{m_n}u_n] = \sum_{l=0}^n \sum_{\substack{J \subset \{1,\dots,n\}, |J|=l \\ J = \{j_1 < \dots < j_l\}}} ((-1)^{\sum_{i=1}^l n-l+i-j_i} \prod_{k \in N \setminus J} m_k)_{\epsilon} [\pi,-1,\dots,-1,u_{j_1},\dots,u_{j_l}] + \\ & \sum_{p=1}^n \sum_{l=p}^n \sum_{\substack{J \subset \{1,\dots,n\}, |J|=l \\ J = \{j_1 < \dots < j_l\}}} (\sum_{I \subset \{1,\dots,l\}, |I|=p} \eta^p (\prod_{i \in I}  m_{j_i} \times \prod_{k \in N \setminus J} m_k)_{\epsilon}) [\pi,-1,\dots,-1,u_{j_1},\dots,u_{j_l}] 
	\end{align*}
	
	By Lemma \ref{chilemma} :
	\begin{align*}
	& [\pi^{m_1}u_1,\dots,\pi^{m_n}u_n] = \sum_{l=0}^n \sum_{\substack{J \subset \{1,\dots,n\}, |J|=l \\ J = \{j_1 < \dots < j_l\}}} ((-1)^{\sum_{i=1}^l n-l+i-j_i} \prod_{k \in N \setminus J} m_k)_{\epsilon} [\pi,-1,\dots,-1,u_{j_1},\dots,u_{j_l}] + \\ & \sum_{p=1}^n \sum_{l=p}^n \sum_{\substack{J \subset \{1,\dots,n\}, |J|=l \\ J = \{j_1 < \dots < j_l\}}} (\sum_{I \subset \{1,\dots,l\}, |I|=p} \eta^p \chi^{\odd}(\prod_{i \in I} m_{j_i} \times \prod_{k \in N \setminus J} m_k)) [\pi,-1,\dots,-1,u_{j_1},\dots,u_{j_l}]
	\end{align*}
	
	By Definition \ref{defmorel} and Proposition \ref{propresidue} , $\partial^{\pi}_v([\pi^{m_1}u_1,\dots,\pi^{m_n}u_n])$ is equal to:
	\begin{align*}
	& \sum_{l=0}^{n-1} \sum_{\substack{J \subset \{1,\dots,n\}, |J|=l \\ J = \{j_1 < \dots < j_l\}}} ((-1)^{\sum_{i=1}^l n-l+i-j_i} \prod_{k \in N \setminus J} m_k)_{\epsilon} [-1,\dots,-1,u_{j_1},\dots,u_{j_l}] + \\ & \sum_{p=1}^n \sum_{l=p}^n \sum_{\substack{J \subset \{1,\dots,n\}, |J|=l \\ J = \{j_1 < \dots < j_l\}}} (\sum_{I \subset \{1,\dots,l\}, |I|=p} \eta^p \chi^{\odd}(\prod_{i \in I} m_{j_i} \times \prod_{k \in N \setminus J} m_k)) [-1,\dots,-1,u_{j_1},\dots,u_{j_l}]
	\end{align*}
	
	Note that the term $l = n$ in the first double sum vanishes since  $\partial^{\pi}_v([u_1,\dots,u_n]) = 0$ (by Definition \ref{defmorel}).
\end{proof}

\section{The Rost-Schmid complex and Rost-Schmid groups}\label{secprelim}

In this appendix, we recall notions about the Rost-Schmid complex and its cohomology groups which are used in our paper. See \cite[Section 3.1]{morel} for recollections about Milnor-Witt $K$-theory. Throughout this appendix, $F$ is a perfect field and $X$ is a smooth finite-type $F$-scheme. We denote the usual generators of the Milnor-Witt $K$-theory ring of $F$ by $[a] \in \K{F}{1}$ (with $a \in F^*$) and $\eta \in \K{F}{-1}$ (see \cite[Definition 3.1]{morel}). We denote $\langle a\rangle := 1 + \eta [a] \in \K{F}{0}$.

\subsection{Definitions and first properties}\label{subdeffirstprop}

We are about to give the definition of the Rost-Schmid complex that Morel gave in \cite[Chapter 5]{morel}. Note that an earlier (equivalent) definition of the Rost-Schmid complex was given in \cite{bargemorel}. (The equivalence of these definitions follows from \cite[Theorem 6.4.5]{morelintro}.) Before we define it, recall Definition \ref{deftwMW} (twisted Milnor-Witt $K$-theory) and the following definition.

\begin{definition}[Determinant of a locally free module] The determinant of a locally free $\Os_X$-module $\Vb$ of constant finite rank $r$, denoted $\det(\Vb)$, is its $r$-th exterior power $\Lambda^r(\Vb)$.
\end{definition}

\begin{definition}[Rost-Schmid complex]\label{defRScomp}
	Let $j \in \Z$ and $\Lb$ be an invertible $\Os_X$-module. The Rost-Schmid complex associated to $X,$ $j$ and $\Lb$ is :
	\[\Cm(X,\KMWF{j}{\Lb}) = \bigoplus_{i \in \N_0} \Cm^i(X,\KMWF{j}{\Lb})\]
	with
	\[\Cm^i(X,\KMWF{j}{\Lb}) = \bigoplus_{x \in X^{(i)}} \KS{\kappa(x)}{j-i}{\nu_x \otimes_{\kappa(x)} \Lb_{|x}}\]
	where $X^{(i)}$ is the set of points of codimension $i$ in $X$, $\Lb_{|x} = \Lb_x \otimes_{\Os_{X,x}} \kappa(x)$ and $\nu_x = \det(\normalsheaf{x}{X})$ with $\normalsheaf{x}{X}$ the normal sheaf of $x$ in $X$, i.e. the dual of $\mathfrak{m}_{X,x}/\mathfrak{m}_{X,x}^2$. We denote $\Cm(X,\KF{j}) := \Cm(X,\KMWF{j}{\Os_X})$.
\end{definition}

Recall Definition \ref{twistedcanonicalres} and the following notation (taken from \cite[pp. 121-122]{morel}).

\begin{notation}\label{notmor}
	Let $x \in X$ be such that $\overline{\{x\}}$ is smooth, $y \in \overline{\{x\}}^{(1)}$ and $\Lb$ be an invertible $\Os_X$-module. We denote by 
	\[\partial^x_y : \KS{\kappa(x)}{*}{\nu_x \otimes_{\kappa(x)} \Lb_{|x}} \to \KS{\kappa(y)}{*-1}{\nu_y \otimes_{\kappa(y)} \Lb_{|y}}\]
	the twisted canonical residue morphism associated to the discrete valuation of $\Os_{\overline{\{x\}},y}$.
\end{notation}

If $\overline{\{x\}}$ is not smooth, the morphism $\partial^x_y : \KS{\kappa(x)}{*}{\nu_x \otimes_{\kappa(x)} \Lb_{|x}} \to \KS{\kappa(y)}{*-1}{\nu_y \otimes_{\kappa(y)} \Lb_{|y}}$ is the sum over the points $z$ above $y$ in the normalisation of $\overline{\{x\}}$ of the composition of the adequate twisted canonical residue morphism and of the transfer morphism associated to $y$ and $z$ (see \cite[Subsection 2.1]{fasel} or Feld's article \cite{feld} (take $M = \un{K}^{\MW}$ in Feld's notations) or D\'eglise's notes \cite{deglisecourse}).

\begin{definition}[Differential of the Rost-Schmid complex]
	Let $j \in \Z$ and $\Lb$ be an invertible $\Os_X$-module. The differential of the Rost-Schmid complex associated to $X$, $j$ and $\Lb$ is the morphism $d_{X,j,\Lb} : \Cm^*(X,\KMWF{j}{\Lb}) \to \Cm^{*+1}(X,\KMWF{j}{\Lb})$, denoted $d$ for short, given by: 
	\[d^i(\sum_{x \in X^{(i)}} k_x) = \sum_{x \in X^{(i)}} \sum_{y \in \overline{\{x\}}^{(1)}} \partial^x_y(k_x)\]
\end{definition}

Note that the sum which appears in the above definition is well-defined since, with the same notations as above, for every $k_x$ the number of $y \in \overline{\{x\}}^{(1)}$ such that $\partial^x_y(k_x) \neq 0$ is finite (see \cite[Sections 4 and 7]{feld} (especially axiom FD) or D\'eglise's notes \cite{deglisecourse}). 

By \cite[Theorem 5.31]{morel}, the Rost-Schmid complex is a complex, i.e. for all $i \in \N_0$, $d^{i+1} \circ d^i = 0$, hence we can define the Rost-Schmid groups as follows.

\begin{definition}[Rost-Schmid groups]\label{defRSgroups}
	Let $i, j \in \Z$, $\Lb$ be an invertible $\Os_X$-module. The $i$-th Rost-Schmid group associated to $X$, $j$ and $\mathcal{L}$, denoted by $\CHMW{X}{i}{j}{\Lb}$, is the $i$-th cohomology group of the Rost-Schmid complex $\Cm(X,\KMWF{j}{\Lb})$, i.e.:
	\[\CHMW{X}{i}{j}{\Lb} = \ker(d^i)/\ima(d^{i-1})\]
	where by convention $d^i = 0$ if $i < 0$. We denote $\CH{X}{i}{j} := \CHMW{X}{i}{j}{\Os_X}$.
\end{definition} 

Note that by definition, for all $i \in \N_0$ and $j \in \Z$, $\Cm^i(\Spec(F),\KF{j}) = \K{F}{j}$ if $i = 0$, to $0$ otherwise, hence $\CH{\Spec(F)}{i}{j} = \K{F}{j}$ if $i = 0$, to $0$ otherwise.

\begin{remark}
	Note that by \cite[Theorem 5.47]{morel} Rost-Schmid groups generalize Chow-Witt groups $\widetilde{CH}^i(X)$: if $X$ is a smooth $F$-scheme and $i \in \N_0$ then $\CH{X}{i}{i} = \widetilde{\Chow}^i(X)$.
\end{remark}

Let us now state the property of homotopy invariance of Rost-Schmid groups.

\begin{theorem}[Theorem 5.38 in \cite{morel}]\label{hominv}
	Let $\pi : \A^1_X \to X$ be the projection, $i \in \N_0$ and $j \in \Z$. The induced morphism $\pi^* : \CH{X}{i}{j} \to \CH{\A^1_X}{i}{j}$ is an isomorphism.
\end{theorem}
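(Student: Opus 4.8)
The plan is to prove the isomorphism by a dévissage that reduces the statement to the computation of Rost-Schmid cohomology of the affine line over a \emph{field}, following the method Rost introduced for cycle modules and which Morel adapts in \cite{morel} to Milnor-Witt $K$-theory (recall that $\KF{j}$ satisfies the residue and transfer axioms recorded in this appendix, so it behaves as a Milnor-Witt cycle module). The morphism $\pi^*$ is the flat pullback, which preserves codimension and hence does land in $\CH{\A^1_X}{i}{j}$. Although the zero section $s_0 : X \to \A^1_X$ with $\pi \circ s_0 = \Id_X$ suggests heuristically that $\pi^*$ should be split, I would avoid invoking Gysin maps (whose construction itself uses homotopy invariance) and instead exhibit $\pi^*$ as the degenerating edge morphism of a spectral sequence, obtaining injectivity and surjectivity at once.

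First I would set up the coniveau filtration on $\Cm(\A^1_X, \KF{j})$ (Definition \ref{defRScomp}) induced by $\pi$. Every $y \in \A^1_X$ lies over $x := \pi(y) \in X$, and $y$ is either the generic point $\xi_x$ of the fibre $\A^1_{\kappa(x)}$ (of codimension $\mathrm{codim}_X(x)$ in $\A^1_X$) or a closed point of that fibre (of codimension $\mathrm{codim}_X(x)+1$). Filtering by $p := \mathrm{codim}_X(\overline{\pi(y)})$ yields a decreasing filtration whose differential has filtration degree $\geq 0$: the part of filtration degree $0$ consists of the residues in the fibre direction (that is, the differential of $\Cm(\A^1_{\kappa(x)}, \KF{j-p})$, twisted by $\nu_x$ and shifted into cohomological degrees $p, p+1$), while the parts of positive degree come from the residues in the $X$-direction. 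The associated spectral sequence has first page $E_1^{p,q} = \bigoplus_{x \in X^{(p)}} \CHMW{\A^1_{\kappa(x)}}{q}{j-p}{\nu_x}$.

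The crux — and the step I expect to be the main obstacle — is the over-a-field input feeding this $E_1$ page: for every field $E$ and weight $m$ one must show that $\CH{\A^1_E}{q}{m}$ vanishes for $q \geq 1$ and that the pullback $\pi_E^*$ identifies $\K{E}{m}$ with $\CH{\A^1_E}{0}{m}$. Concretely this is the analysis of the two-term complex $\K{E(t)}{m} \xrightarrow{(\partial_x)_{x}} \bigoplus_{x \in (\A^1_E)^{(1)}} \KS{\kappa(x)}{m-1}{\nu_x}$ built from the canonical residues (Definition \ref{canonicalres}), and it rests on the Milnor-Witt analogue of Milnor's fundamental exact sequence: the decomposition of $\K{E(t)}{*}$ into $\K{E}{*}$ together with contributions indexed by the closed points of $\A^1_E$, with the Rost-Schmid differentials realizing the corresponding residue projections (here the explicit formulas of Theorem \ref{computeresidue} make all the maps computable). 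Granting this, $E_1^{p,q}$ is concentrated in $q = 0$, where it equals $\Cm^p(X, \KF{j})$ with differential the Rost-Schmid differential of $X$; hence $E_2^{p,0} = \CH{X}{p}{j}$ and the sequence degenerates. A final naturality verification — that the surviving fibre-direction generator is precisely $\pi^*$ and that the induced $d_1$ matches the differential of $\Cm(X, \KF{j})$, which reduces to the compatibility of residues taken in the fibre and base directions — identifies the degeneration isomorphism with $\pi^*$ and completes the proof.
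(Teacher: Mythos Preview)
The paper does not give its own proof of this statement: it is quoted verbatim as \cite[Theorem 5.38]{morel} and used as a black box. So there is nothing in the paper to compare against. Your outline is essentially the argument Morel carries out in \cite[\S5.2]{morel} (itself modelled on Rost's proof for cycle modules): filter the Rost-Schmid complex of $\A^1_X$ by the codimension of the image in $X$, identify the graded pieces with the Rost-Schmid complexes of $\A^1_{\kappa(x)}$ twisted by $\nu_x$, and feed in the over-a-field homotopy invariance (the Milnor--Witt analogue of Milnor's exact sequence for $\K{E(t)}{*}$) to collapse the spectral sequence. The one place where your sketch is genuinely underspecified is exactly the step you flag yourself: the over-a-field input is not a formality, and in Morel's treatment it occupies most of the work (the construction of a contracting homotopy via geometric transfers, cf.\ \cite[Theorem 3.24 and \S5.1--5.2]{morel}); the formulas of Theorem \ref{computeresidue} help compute individual residues but do not by themselves yield the splitting of $\K{E(t)}{*}$.
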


Note that it follows from this theorem that for all $n,i \in \N_0$ and $j \in \Z$, $\CH{\A^n_F}{i}{j}$ is canonically isomorphic to $\CH{\Spec(F)}{i}{j}$ hence to $\K{F}{j}$ if $i = 0$, to $0$ otherwise.

We now define boundary triples and boundary maps, which were introduced by Feld in \cite{feld} (following what Rost did in \cite{rost}).

\begin{definition}[Boundary triple]
	A boundary triple is a $5$-tuple $(Z,i,X,j,U)$, or abusively a triple $(Z,X,U)$, with $i : Z \to X$ a closed immersion and $j : U \to X$ an open immersion such that the image of $U$ by $j$ is the complement in $X$ of the image of $Z$ by $i$, where $Z,X,U$ are smooth $F$-schemes of pure dimensions. We denote by $d_Z$ and $d_X$ the dimensions of $Z$ and $X$ respectively and by $\nu_Z$ the determinant of the normal sheaf of $Z$ in $X$.
\end{definition}

\begin{remark}\label{rkcaniso}
	Let $(Z,i,X,j,U)$ be a boundary triple. Note that, similarly to \cite[(3.10)]{rost}, for each integer $m$, the complex $\Cm^{\bullet+d_Z-d_X}(Z,\KMWF{m+d_Z-d_X}{\nu_Z})$ is a subcomplex of $\Cm^{\bullet}(X,\KF{m})$ with quotient complex $\Cm^{\bullet}(U,\KF{m})$, and that we have for each integer $n$ a canonical isomorphism
	\[\Cm^{n}(X,\KF{m}) \simeq \Cm^{n+d_Z-d_X}(Z,\KMWF{m+d_Z-d_X}{\nu_Z}) \oplus \Cm^{n}(U,\KF{m})\]
\end{remark}

\begin{notation}\label{notcaniso}
	We denote the projections by $i^* : \Cm^{\bullet}(X,\KF{*}) \to \Cm^{\bullet+d_Z-d_X}(Z,\KMWF{*+d_Z-d_X}{\nu_Z})$ and $j^* : \Cm^{\bullet}(X,\KF{*}) \to \Cm^{\bullet}(U,\KF{*})$ and the inclusions by $i_* : \Cm^{\bullet+d_Z-d_X}(Z,\KMWF{*+d_Z-d_X}{\nu_Z}) \to \Cm^{\bullet}(X,\KF{*})$ and $j_* : \Cm^{\bullet}(U,\KF{*}) \to \Cm^{\bullet}(X,\KF{*})$ (see Remark \ref{rkcaniso}).
\end{notation}

\begin{remark}\label{rkinclsubcomp}
	Note that since $i_*$ (resp. $j^*$) is the inclusion of a subcomplex (resp. the projection to a quotient complex), it commutes with the differentials of the complexes, hence induces a morphism  $i_* : \CHMW{Z}{n}{m}{\nu_Z} \to \CH{X}{{n + d_X - d_Z}}{m + d_X - d_Z}$ (resp. $j^* : \CH{X}{n}{m} \to \CH{U}{n}{m}$). Also note that this morphism $i_*$ coincides with the pushforward along the closed immersion $i : Z \to X$ (see \cite[Subsection 2.3]{fasel}) and that this morphism $j^*$ coincides with the pullback along the open immersion $j : U \to X$ (see \cite[Subsection 2.4]{fasel}).
\end{remark}

\begin{definition}[Boundary map]\label{defboundarymap}
	Let $(Z,i,X,j,U)$ be a boundary triple. The boundary map associated to this boundary triple is the morphism 
	\[\partial : \Cm^{\bullet}(U,\KF{*}) \to \Cm^{\bullet+1+d_Z-d_X}(Z,\KMWF{*+d_Z-d_X}{\nu_Z})\]
	induced by the differential $d$ of the Rost-Schmid complex $\Cm(X,\KF{*})$, i.e.:
	\[\partial = i^* \circ d \circ j_*\]
\end{definition}

The following theorem is a special case of the more general exact triangle theorem in homological algebra (the boundary maps being the connecting morphisms).

\begin{theorem}\label{locseq}
	Let $(Z,i,X,j,U)$ be a boundary triple. The boundary map induces a morphism $\partial : \CH{U}{n+d_X-d_Z}{m+d_X-d_Z} \to \CHMW{Z}{n+1}{m}{\nu_Z}$ and we have the following long exact sequence, called the localization long exact sequence:
	\[\xymatrix{\dots \ar[r] & \CHMW{Z}{n}{m}{\nu_Z} \ar[r]^-{i_*} & \CH{X}{n + d_X - d_Z}{m + d_X - d_Z} \ar[r]^-{j^*} & \ \ \
	\\ 
	\ \ \ \ar[r]^-{j^*} & \CH{U}{n + d_X - d_Z}{m + d_X - d_Z} \ar[r]^-\partial & \CHMW{Z}{n+1}{m}{\nu_Z} \ar[r] & \dots}\]
\end{theorem}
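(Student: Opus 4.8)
The plan is to recognise the stated sequence as the long exact cohomology sequence of a short exact sequence of cochain complexes, exactly as the remark preceding the statement suggests, and then to identify its connecting homomorphism with the boundary map $\partial = i^* \circ d \circ j_*$ of Definition \ref{defboundarymap}. First I would assemble the short exact sequence of complexes
\[0 \to \Cm^{\bullet+d_Z-d_X}(Z,\KMWF{m+d_Z-d_X}{\nu_Z}) \xrightarrow{i_*} \Cm^{\bullet}(X,\KF{m}) \xrightarrow{j^*} \Cm^{\bullet}(U,\KF{m}) \to 0.\]
That this is a sequence of complexes is precisely the content of Remark \ref{rkinclsubcomp}, where $i_*$ is shown to be the inclusion of a subcomplex and $j^*$ the projection onto the quotient complex, so that both commute with the Rost-Schmid differential. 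Degreewise exactness is then immediate from the canonical splitting $\Cm^{n}(X,\KF{m}) \simeq \Cm^{n+d_Z-d_X}(Z,\KMWF{m+d_Z-d_X}{\nu_Z}) \oplus \Cm^{n}(U,\KF{m})$ of Remark \ref{rkcaniso}: under this decomposition $i_*$ is the inclusion of the first summand, $j^*$ is the projection onto the second, and $\ker(j^*) = \ima(i_*)$ is exactly the $Z$-summand.

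Next I would invoke the standard zig-zag (snake) lemma, which for any short exact sequence of complexes $0 \to A^{\bullet} \to B^{\bullet} \to C^{\bullet} \to 0$ produces a long exact sequence $\cdots \to H^N(A) \to H^N(B) \to H^N(C) \xrightarrow{\delta} H^{N+1}(A) \to \cdots$ with a well-defined connecting morphism $\delta$. Writing $N := n + d_X - d_Z$ and tracking the two index shifts carried by the $Z$-summand, one reads off $H^N(B) = \CH{X}{n+d_X-d_Z}{m+d_X-d_Z}$ and $H^N(C) = \CH{U}{n+d_X-d_Z}{m+d_X-d_Z}$; moreover, since the $N$-th term of the subcomplex is $\Cm^{n}(Z,\KMWF{m}{\nu_Z})$, one also has $H^N(A) = \CHMW{Z}{n}{m}{\nu_Z}$ and $H^{N+1}(A) = \CHMW{Z}{n+1}{m}{\nu_Z}$. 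This bookkeeping turns the abstract long exact sequence into the displayed localization sequence, with $i_*$ and $j^*$ induced exactly as in Remark \ref{rkinclsubcomp}.

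The remaining point --- and the one that uses the specific shape of the definition rather than pure homological algebra --- is to check that the snake-lemma connecting morphism $\delta$ coincides with $\partial$. Here I would use that the decomposition of Remark \ref{rkcaniso} provides, at the level of graded groups, a canonical section $j_*$ of $j^*$ (the inclusion of the complementary summand) together with the left inverse $i^*$ of $i_*$. The snake lemma computes $\delta$ on a class $[c]$, with $c$ a cocycle in $\Cm^N(U,\KF{m+d_X-d_Z})$, by lifting $c$ along $j^*$, applying the differential $d$ of the $X$-complex, and pulling the result back along $i_*$; choosing the lift $j_*(c)$ and using that $d\,j_*(c)$ has vanishing $U$-component (as $j^* d\, j_*(c) = d\, j^* j_*(c) = d c = 0$) hence lies in the $Z$-summand, the pullback is computed by $i^*$, giving $\delta([c]) = [i^* \circ d \circ j_*(c)] = [\partial(c)]$. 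This simultaneously establishes the first assertion of the statement, namely that $\partial$ descends to a well-defined map on the Rost-Schmid groups.

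The main obstacle is thus not any deep computation but the careful tracking of the dimension and weight shifts $d_Z - d_X$ attached to the $Z$-summand, so that the abstract indices $N$, $m+d_X-d_Z$ align correctly with the stated indices $n$, $m$ on the three schemes; the homological input itself is entirely formal, as the preceding remark already anticipates. I would therefore present the argument as: (i) the short exact sequence of complexes above, (ii) the resulting long exact sequence after reindexing, and (iii) the one-line identification of $\delta$ with $\partial$ via the section $j_*$ and projection $i^*$.
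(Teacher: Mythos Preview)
Your proposal is correct and follows exactly the approach the paper indicates: the paper merely remarks that the theorem is a special case of the general exact triangle (snake lemma) in homological algebra, with the boundary maps serving as connecting morphisms, and offers no further argument. Your write-up simply fleshes out that remark by exhibiting the short exact sequence of complexes from Remarks \ref{rkcaniso}--\ref{rkinclsubcomp} and identifying the connecting morphism with $\partial = i^* \circ d \circ j_*$ via the canonical splitting, which is precisely what the paper intends.
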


\subsection{The Rost-Schmid groups of punctured affine spaces}\label{subRS}

Let us now compute the Rost-Schmid groups of $\As{n}{F}$ for $n \geq 2$. To do this, we use the following lemma (which is also used in the main part of the paper).

\begin{lemma}\label{propll}
	Let $\Lb$ be an invertible $\Os_X$-module. For all $i,j \in \Z$, the morphism 
	\[\fun{\Cm^i(X,\KMWF{j}{\Lb \otimes \Lb})}{\Cm^i(X,\KF{j})}{\disp \sum_{x \in I} k_x \otimes (l_x \otimes l_x)}{\disp \sum_{x \in I} k_x}\]
	where $I$ is a finite subset of $X^{(i)}$, $k_x \in \KS{\kappa(x)}{j-i}{\nu_x}$ and $l_x \in \Lb_{|x} \setminus \{0\}$, is a well-defined isomorphism which commutes with differentials.
\end{lemma}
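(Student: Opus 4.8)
The plan is to build the map summand by summand over the points of $X$, to check on each summand that it is a well-defined isomorphism, and then to verify compatibility with the Rost-Schmid differentials by exploiting the naturality of the twisted residue morphisms in the twisting invertible module.

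First I would recall from Definition \ref{deftwMW} that for $x \in X$ and a one-dimensional $\kappa(x)$-vector space $L$ one has $\KS{\kappa(x)}{m}{L} = \K{\kappa(x)}{m} \otimes_{\Z[\kappa(x)^*]} \Z[L \setminus \{0\}]$, so that a choice of $\lambda \in L \setminus \{0\}$ yields an isomorphism $\K{\kappa(x)}{m} \to \KS{\kappa(x)}{m}{L}$, $\alpha \mapsto \alpha \otimes \lambda$. Fix $x \in X^{(i)}$ and apply this with $L = \nu_x \otimes_{\kappa(x)} \Lb_{|x}^{\otimes 2}$: for any $l \in \Lb_{|x} \setminus \{0\}$, the element $l \otimes l$ is a basis of $\Lb_{|x}^{\otimes 2}$, hence $\mu \mapsto \mu \otimes (l \otimes l)$ is a bijection $\nu_x \setminus \{0\} \to L \setminus \{0\}$, and every generator of $\KS{\kappa(x)}{j-i}{\nu_x \otimes \Lb_{|x}^{\otimes 2}}$ can be written $\alpha \otimes (\mu \otimes (l \otimes l))$. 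I define $\Phi_x(\alpha \otimes (\mu \otimes (l \otimes l))) := \alpha \otimes \mu \in \KS{\kappa(x)}{j-i}{\nu_x}$. Compatibility with the defining relation $(\langle a \rangle \alpha) \otimes \omega = \alpha \otimes (a \omega)$ is immediate; and changing $l$ into $b l$ with $b \in \kappa(x)^*$ turns $l \otimes l$ into $b^2 (l \otimes l)$, which leaves $\Phi_x$ unchanged since $\langle b^2 \rangle = 1$ in $\K{\kappa(x)}{0}$. So $\Phi_x$ is well defined and independent of $l$, it is an isomorphism with inverse $k \mapsto k \otimes (l \otimes l)$, and it satisfies $\Phi_x(k \otimes (l \otimes l)) = k$ for every $k \in \KS{\kappa(x)}{j-i}{\nu_x}$ and every $l$. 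Taking the direct sum over $x \in X^{(i)}$ produces an isomorphism of abelian groups $\Phi^i \colon \Cm^i(X, \KMWF{j}{\Lb \otimes \Lb}) \to \Cm^i(X, \KF{j})$ which is exactly the map in the statement.

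Next I would check that $\Phi^{\bullet}$ commutes with the differential, i.e. that for all $x \in X^{(i)}$ and $y \in \overline{\{x\}}^{(1)}$ one has $\Phi_y \circ \partial^x_y = \partial^x_y \circ \Phi_x$. Since $x$ is a generization of $y$, the ring $\Os_{X,x}$ is a localization of $\Os_{X,y}$, so a generator $e$ of the free $\Os_{X,y}$-module $\Lb_y$ also trivializes $\Lb$ at $x$ and induces bases $e|_x, e|_y$ of $\Lb_{|x}, \Lb_{|y}$, whence bases $e|_x \otimes e|_x$ and $e|_y \otimes e|_y$ of $\Lb_{|x}^{\otimes 2}$ and $\Lb_{|y}^{\otimes 2}$. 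Because the twisted canonical residue morphism carries its twist passively (Definition \ref{twistedcanonicalres}), and the same holds for the transfer morphisms entering $\partial^x_y$ when $\overline{\{x\}}$ is singular, the isomorphisms $\KS{\kappa(x)}{\ast}{\nu_x \otimes \Lb_{|x}} \cong \KS{\kappa(x)}{\ast}{\nu_x}$ and $\KS{\kappa(y)}{\ast}{\nu_y \otimes \Lb_{|y}} \cong \KS{\kappa(y)}{\ast}{\nu_y}$ induced by $e|_x$ and $e|_y$ intertwine $\partial^x_y$; applying this with $\Lb$ replaced by $\Lb \otimes \Lb$ and $e$ by $e \otimes e$ shows precisely that $\partial^x_y$ intertwines the instances of $\Phi_x, \Phi_y$ computed with $l_x = e|_x$ and $l_y = e|_y$. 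Since these maps do not depend on such choices, the square commutes, and summing over $x, y$ gives $\Phi^{\bullet} \circ d = d \circ \Phi^{\bullet}$.

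The hard part will be the last step: one must pin down that the twisted residue and transfer morphisms are natural with respect to isomorphisms of the twisting invertible module — equivalently, that trivializing $\Lb$ near $y$ identifies the $\Lb$-twisted part of the differential with the untwisted one — and then observe that the square ambiguity in choosing such a trivialization is exactly what the relation $\langle u^2 \rangle = 1$ kills. This is what turns the genuinely $\Lb$-dependent complex $\Cm(X, \KMWF{j}{\Lb \otimes \Lb})$ into something canonically isomorphic to $\Cm(X, \KF{j})$; the rest is routine bookkeeping with the definition of twisted Milnor-Witt $K$-theory.
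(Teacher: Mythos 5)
Your proof is correct and follows essentially the same route as the paper: both reduce to the one-dimensionality of $\Lb_{|x}$, use that $l\otimes l$ spans $\Lb_{|x}^{\otimes 2}$ together with $\langle b^2\rangle = 1$ to get well-definedness independent of the chosen generator, and invert by $k \mapsto k \otimes (l\otimes l)$. Your treatment of the commutation with differentials (local trivialization of $\Lb$ at $y$ restricting to $x$, plus the passivity of the twist in Definition \ref{twistedcanonicalres}) is a correct elaboration of the step the paper dismisses as straightforward.
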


\begin{proof}
	Note that elements of $\Cm^i(X,\KMWF{j}{\Lb \otimes \Lb})$ are of the form $\sum_{x \in I} m_x \otimes t_x$ with $I$ a finite subset of $X^{(i)}$, $m_x \in \K{\kappa(x)}{j-i}$ and $t_x \in \Z[(\nu_x \otimes (\Lb \otimes \Lb)_{|x}) \setminus \{0\}]$. Let $x \in I$. Since $\nu_x \otimes (\Lb \otimes \Lb)_{|x}$ is a $\kappa(x)$-vector space of dimension $1$, there exist $n_x \in \K{\kappa(x)}{j-i}$ and $s_x \in (\nu_x \otimes (\Lb \otimes \Lb)_{|x}) \setminus \{0\}$ such that $m_x \otimes t_x = n_x \otimes s_x$. By definition of $\KS{\kappa(x)}{j-i}{\nu_x}$, there exist $h_x \in \KS{\kappa(x)}{j-i}{\nu_x}$ and $l_x,r_x \in \Lb_{|x} \setminus \{0\}$ such that $n_x \otimes s_x = h_x \otimes (l_x \otimes r_x)$. Since $\Lb_{|x}$ is a $\kappa(x)$-vector space of dimension $1$, there exists $v_x \in \kappa(x)^*$ such that $r_x = v_x l_x$. It follows that $h_x \otimes (l_x \otimes r_x) = \langle v_x\rangle h_x \otimes (l_x \otimes l_x)$. Denoting $k_x := \langle v_x\rangle h_x$, we get that elements of $\Cm^i(X,\KMWF{j}{\Lb \otimes \Lb})$ are of the form $\disp \sum_{x \in I} k_x \otimes (l_x \otimes l_x)$ with $I$ a finite subset of $X^{(i)}$, $k_x \in \KS{\kappa(x)}{j-i}{\nu_x}$ and $l_x \in \Lb_{|x} \setminus \{0\}$.
	
	This morphism is well-defined since if $\sum_{x \in I} k_x \otimes (l_x \otimes l_x) = \sum_{x \in J} k'_x \otimes (l'_x \otimes l'_x)$ with $I,J$ finite subsets of $X^{(i)}$, $k_x,k'_x \in \KS{\kappa(x)}{j-i}{\nu_x}$ and $l_x,l'_x \in \Lb_{|x} \setminus \{0\}$, then for all $x \in I \cup J \setminus (I \cap J)$, $k_x = k'_x = 0$, and for all $x \in I \cap J$, $l'_x = u_x l_x$ for some $u_x \in F^*$ and $k'_x \otimes (l'_x \otimes l'_x) = \langle u_x^2\rangle k'_x \otimes (l_x \otimes l_x) = k'_x \otimes (l_x \otimes l_x)$ hence $k'_x \otimes (l_x \otimes l_x) = k_x \otimes (l_x \otimes l_x)$ hence $k'_x = k_x$. The preceding equality $k_x \otimes (l'_x \otimes l'_x) = k_x \otimes (l_x \otimes l_x)$ shows that the morphism
	\[\fun{\Cm^i(X,\KF{j})}{\Cm^i(X,\KMWF{j}{\Lb \otimes \Lb})}{\disp \sum_{x \in I} k_x}{\disp \sum_{x \in I} k_x \otimes (l_x \otimes l_x)}\]
	is well-defined, which shows that the morphism in the statement is an isomorphism. The commutation with differentials is straightforward.
\end{proof}

\begin{definition}\label{defindiso}
	Let $n \geq 2, j \in \Z$ be integers and $o : \det(\normalsheaf{\{0\}}{\A^n_F}) \to \Os_{\{0\}} \otimes \Os_{\{0\}}$ be an isomorphism. The isomorphism $o$ gives rise to an isomorphism $\CHMW{\{0\}}{0}{j}{\det(\normalsheaf{\{0\}}{\A^n_F})}$ $\to \CHMW{\{0\}}{0}{j}{\Os_{\{0\}} \otimes \Os_{\{0\}}}$ hence to an isomorphism $\widetilde{o} : \CHMW{\{0\}}{0}{j}{\det(\normalsheaf{\{0\}}{\A^n_F})}$ $\to \CH{\{0\}}{0}{j} = \K{F}{j}$ by Lemma \ref{propll}. We call $\widetilde{o}$ the isomorphism induced by $o$.
\end{definition}

\begin{proposition}\label{RS}
	Let $n \geq 2,i \geq 0$ and $j \in \Z$ be integers. Denoting by $\psi : \As{n}{F} \to \A^n_F$ the inclusion and by $\partial$ the boundary map associated to the boundary triple $(\{0\},\A^n_F,\As{n}{F})$, the morphisms $\psi^* : \K{F}{j} \simeq \CH{\A^n_F}{0}{j} \to \CH{\As{n}{F}}{0}{j}$ and $\partial : \CH{\As{n}{F}}{n-1}{j} \to \CHMW{\{0\}}{0}{j-n}{\det(\normalsheaf{\{0\}}{\A^n_F})} \simeq \K{F}{j-n}$ are isomorphisms and if $i \notin \{0,n-1\}$ then $\CH{\As{n}{F}}{i}{j} = 0$.
\end{proposition}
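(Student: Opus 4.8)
The plan is to feed the boundary triple $(\{0\},\A^n_F,\As{n}{F})$ into the localization long exact sequence of Theorem \ref{locseq} and to read everything off from the already-known Rost--Schmid cohomology of $\A^n_F$ and of a point. Here $Z=\{0\}=\Spec(F)$ has dimension $0$ and $X=\A^n_F$ has dimension $n$, so the shift occurring in Theorem \ref{locseq} is $d_X-d_Z=n$; set $\nu:=\det(\normalsheaf{\{0\}}{\A^n_F})$, a one-dimensional $F$-vector space. The two inputs I would record first are: (i) since $\Spec(F)$ has no point of positive codimension, its Rost--Schmid complex (with any twist) is concentrated in cohomological degree $0$, so $\CHMW{\{0\}}{p}{q}{\nu}=0$ for all $p\neq 0$, while $\CHMW{\{0\}}{0}{q}{\nu}=\KS{F}{q}{\nu}\simeq\K{F}{q}$ (non-canonically, e.g.\ through the isomorphism induced by an orientation as in Definition \ref{defindiso}); and (ii) by homotopy invariance (Theorem \ref{hominv}), $\CH{\A^n_F}{i}{j}$ is canonically isomorphic to $\CH{\Spec(F)}{i}{j}$, hence equals $\K{F}{j}$ for $i=0$ and vanishes for $i\neq 0$ --- in particular, as $n\geq 2$, it vanishes in each of the degrees $i\in\{n-1,n\}$.

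Next I would substitute these into the exact sequence. After the shift $d_X-d_Z=n$, the portion around cohomological degree $i$ on $\As{n}{F}$ reads
\[\CHMW{\{0\}}{i-n}{j-n}{\nu}\xrightarrow{\;i_*\;}\CH{\A^n_F}{i}{j}\xrightarrow{\;\psi^*\;}\CH{\As{n}{F}}{i}{j}\xrightarrow{\;\partial\;}\CHMW{\{0\}}{i-n+1}{j-n}{\nu}\xrightarrow{\;i_*\;}\CH{\A^n_F}{i+1}{j},\]
in which $\psi^*$ is the pullback along the open immersion and $\partial$ is the boundary map of the triple, i.e.\ exactly the maps named in the statement. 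By (i) the two outer terms can be nonzero only for $i=n$ (where $i-n=0$) or $i=n-1$ (where $i-n+1=0$). For $i\notin\{n-1,n\}$ the sequence collapses to an isomorphism $\psi^*\colon\CH{\A^n_F}{i}{j}\xrightarrow{\;\sim\;}\CH{\As{n}{F}}{i}{j}$: taking $i=0$ gives the first assertion, and taking $i\geq 1$ with $i\notin\{n-1,n\}$ gives $\CH{\As{n}{F}}{i}{j}=0$ by (ii). For $i=n$ both $\CH{\A^n_F}{n}{j}$ and $\CHMW{\{0\}}{1}{j-n}{\nu}$ vanish, whence $\CH{\As{n}{F}}{n}{j}=0$, and since $n\notin\{0,n-1\}$ this completes the vanishing claim. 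Finally, for $i=n-1$ the terms $\CHMW{\{0\}}{-1}{j-n}{\nu}$, $\CH{\A^n_F}{n-1}{j}$ and $\CH{\A^n_F}{n}{j}$ all vanish, leaving the short exact sequence $0\to\CH{\As{n}{F}}{n-1}{j}\xrightarrow{\;\partial\;}\CHMW{\{0\}}{0}{j-n}{\nu}\to 0$, so $\partial$ is an isomorphism --- the second assertion.

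I do not expect any genuine obstacle: the argument is just the localization sequence together with homotopy invariance. The only points needing care are verifying that $(\{0\},\A^n_F,\As{n}{F})$ is indeed a boundary triple (all three schemes smooth of pure dimension, with $\As{n}{F}$ the open complement of the closed point) and keeping the bookkeeping of the shift $n$ straight, in particular distinguishing the two borderline indices $i=n-1$ and $i=n$; these are genuinely different from $0$ precisely because $n\geq 2$, which is why that hypothesis is needed for the statement as phrased.
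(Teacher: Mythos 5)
Your proposal is correct and follows essentially the same route as the paper: plug the boundary triple $(\{0\},\A^n_F,\As{n}{F})$ into the localization long exact sequence of Theorem \ref{locseq}, use homotopy invariance to kill $\CH{\A^n_F}{i}{j}$ for $i\neq 0$, and use that the Rost--Schmid complex of a point is concentrated in degree $0$. Your write-up just makes the index bookkeeping at $i=n-1$ and $i=n$ more explicit than the paper does.
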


\begin{proof}
	The localization long exact sequence (see Theorem \ref{locseq}) associated to the boundary triple $(\{0\},\varphi,\A^n_F,\psi,\As{n}{F})$ gives us the following exact sequences for all $j \in \Z$ and $i \notin \{0,n-1\}$:
	\[\xymatrix{0 \ar[r] & \CH{\A^n_F}{0}{j} \ar[r]^-{\psi^*} & \CH{\As{n}{F}}{0}{j} \ar[r] & 0}\]
	\[\xymatrix{0 \ar[r] & \CH{\As{n}{F}}{n-1}{j} \ar[r]^-{\partial} & \CHMW{\{0\}}{0}{j-n}{\det(\normalsheaf{\{0\}}{\A^n_F})} \ar[r] & 0}\]
	\[\xymatrix{0 \ar[r] & \CH{\As{n}{F}}{i}{j} \ar[r] & 0}\]
\end{proof}

\begin{remark}
	Note that the Rost-Schmid groups of $\As{n}{F}$ are already known (combine \cite[Lemma 4.5]{asokfasel} with \cite[Corollary 5.43]{morel}, \cite[Example 1.5.1.19]{feldphd} and \cite[Theorem 5.46]{morel}), but the explicit definition of isomorphisms we did above is important for the two following definitions, the first of which is used in the definition of the quadratic linking degree (see Definition \ref{defqld}) and the second of which is used in the definition of the ambient quadratic linking degree (see Definition \ref{defaqld}).
\end{remark} 

\begin{definition}[The conventional isomorphism for $\As{2}{F}$]\label{conviso}
	The conventional isomorphism 
	\[\zeta : \CH{\As{2}{F}}{1}{0} \to \W(F)\]
	is the composite of the boundary map 
	\[\partial : \CH{\As{2}{F}}{1}{0} \to \CHMW{\{0\}}{0}{-2}{\det(\normalsheaf{\{0\}}{\A^2_F})}\]
	(which is an isomorphism by Proposition \ref{RS}), of the isomorphism 
	\[\CHMW{\{0\}}{0}{-2}{\det(\normalsheaf{\{0\}}{\A^2_F})} \to \K{F}{-2}\]
	induced by the isomorphism $\det(\normalsheaf{\{0\}}{\A^2_F}) \to \Os_{\{0\}} \otimes \Os_{\{0\}}$ which sends $\overline{u}^* \wedge \overline{v}^*$ to $1 \otimes 1$, where $\A^2_F = \Spec(F[u,v])$ (see Definition \ref{defindiso}) and of the canonical isomorphism (which sends $\eta^2$ to~$1$)
	\[\K{F}{-2} \to \W(F)\]
\end{definition}

\begin{definition}[The conventional isomorphism for $\As{4}{F}$]\label{convisobis}
	The conventional isomorphism 
	\[\zeta' : \CH{\As{4}{F}}{3}{2} \to \W(F)\]
	is the composite of the boundary map 
	\[\partial : \CH{\As{4}{F}}{3}{2} \to \CHMW{\{0\}}{0}{-2}{\det(\normalsheaf{\{0\}}{\A^4_F})}\]
	(which is an isomorphism by Proposition \ref{RS}), of the isomorphism
	\[\CHMW{\{0\}}{0}{-2}{\det(\normalsheaf{\{0\}}{\A^4_F})} \to \K{F}{-2}\]
	induced by the isomorphism $\det(\normalsheaf{\{0\}}{\A^4_F}) \to \Os_{\{0\}} \otimes \Os_{\{0\}}$ which sends $\overline{x}^* \wedge \overline{y}^* \wedge \overline{z}^* \wedge \overline{t}^*$ to $1 \otimes 1$, where $\A^4_F = \Spec(F[x,y,z,t])$ (see Definition \ref{defindiso}) and of the canonical isomorphism (which sends $\eta^2$ to $1$)
	\[\K{F}{-2} \to \W(F)\]
\end{definition}

\subsection{The intersection product of oriented divisors}\label{subprod}

The intersection product is defined from the exterior product (see \cite[Subsection 3.1]{fasel}), which is also known as the cross product (see \cite[Section 11]{feld}) and the pullback along the diagonal (see \cite[Subsection 3.3]{fasel}), which is also known as the Gysin morphism induced by the diagonal (see \cite[Section 10]{feld}).

\begin{definition}[The intersection product]\label{defintprod}
	Let $\Delta : X \to X \times X$ be the diagonal. The intersection product $\cdot : \CH{X}{i}{j} \times \CH{X}{i'}{j'} \to \CH{X}{i+i'}{j+j'}$ is the composite of the exterior product $\mu : \CH{X}{i}{j} \times \CH{X}{i'}{j'} \to \CH{X \times X}{i+i'}{j+j'}$ with the pullback (a.k.a. Gysin morphism) $\Delta^* : \CH{X \times X}{i+i'}{j+j'} \to \CH{X}{i+i'}{j+j'}$. 
\end{definition}

The proposition below states that the intersection product is a product.

\begin{proposition}[Subsection 3.4 in \cite{fasel} or Theorem 11.6 in \cite{feld}]\label{intscalar}
	The intersection product makes $\disp \bigoplus_{i \in \N_0} \widetilde{\Chow}^i(X)$ into a graded $\K{F}{0}$-algebra, which is called the Chow-Witt ring.
\end{proposition}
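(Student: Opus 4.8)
The plan is to deduce this from the formal properties of the exterior product $\mu$ and the Gysin pullback $\Delta^*$ that are established in \cite[Subsections 3.1--3.4]{fasel} and \cite[Sections 10--11]{feld}. Recalling that $\widetilde{\Chow}^i(X) = \CH{X}{i}{i}$, the intersection product $\cdot = \Delta^* \circ \mu$ sends $\CH{X}{i}{i} \times \CH{X}{i'}{i'}$ into $\CH{X}{i+i'}{i+i'}$, so it is a candidate graded multiplication on $\bigoplus_{i \in \N_0} \widetilde{\Chow}^i(X)$, and it remains to verify bilinearity and $\K{F}{0}$-linearity, the existence of a unit, associativity, and $\epsilon$-graded commutativity.

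First I would observe that bilinearity is immediate, since $\mu$ is biadditive (\cite[Subsection 3.1]{fasel}) and $\Delta^*$ is additive; similarly $\K{F}{0}$-linearity, i.e. $\langle a\rangle(\alpha \cdot \beta) = (\langle a\rangle\alpha) \cdot \beta = \alpha \cdot (\langle a\rangle\beta)$ for $a \in F^*$, follows from the compatibility of both $\mu$ and $\Delta^*$ with the $\K{F}{0}$-action on Rost--Schmid groups. Next I would note that the unit is the class $1 \in \CH{X}{0}{0}$: writing $p \colon X \times X \to X$ for the second projection one has $\mu(1,\alpha) = p^*\alpha$ and $p \circ \Delta = \Id_X$, whence $1 \cdot \alpha = \Delta^* p^* \alpha = \alpha$, and symmetrically on the other side. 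For associativity I would use the associativity of $\mu$ together with the functoriality of Gysin morphisms: letting $\Delta_3 \colon X \to X \times X \times X$ be the triple diagonal and factoring it both as $(\Delta \times \Id) \circ \Delta$ and as $(\Id \times \Delta) \circ \Delta$, one sees that $(\alpha \cdot \beta) \cdot \gamma$ and $\alpha \cdot (\beta \cdot \gamma)$ are both equal to $\Delta_3^* \mu(\mu(\alpha,\beta),\gamma) = \Delta_3^* \mu(\alpha,\mu(\beta,\gamma))$.

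Finally, $\epsilon$-graded commutativity will follow from the $\epsilon$-graded commutativity of Milnor--Witt $K$-theory (\cite[Corollary 3.8]{morel}): the transposition $\tau \colon X \times X \to X \times X$ satisfies $\tau \circ \Delta = \Delta$ and intertwines $\mu(\alpha,\beta)$ with $\mu(\beta,\alpha)$ up to the expected Milnor--Witt sign, so applying $\Delta^* = \Delta^* \circ \tau^*$ transports that sign onto the intersection product. I expect the main obstacle to be not this assembly --- which is purely formal once the inputs are available --- but the construction of $\mu$ and $\Delta^*$ and their base-change, projection-formula, composition-compatibility and $\K{F}{0}$-linearity properties; these are precisely the content of \cite[Subsections 3.1--3.3]{fasel} and \cite[Sections 10--11]{feld}, which I would invoke directly rather than reprove.
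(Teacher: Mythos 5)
Your proposal is correct: the paper itself gives no proof of this proposition but simply cites \cite[Subsection 3.4]{fasel} and \cite[Theorem 11.6]{feld}, and your formal assembly (bilinearity and $\K{F}{0}$-linearity from the corresponding properties of $\mu$ and $\Delta^*$, the unit via the projection and $p \circ \Delta = \Id_X$, associativity via the two factorisations of the triple diagonal, and $\epsilon$-graded commutativity via the swap $\tau$ with $\tau \circ \Delta = \Delta$) is exactly the standard argument carried out in those references. Since you defer the substantive inputs --- the construction of $\mu$ and $\Delta^*$ and their compatibility with composition and the $\K{F}{0}$-action --- to the same sources the paper cites, your route is essentially the same as the paper's.
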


In this paper, we are interested in the intersection product of oriented divisors and often use the following proposition.

\begin{proposition}[Subsection 3.4 in \cite{fasel}]\label{comintprod}
	Let $c_1,c_2$ be oriented divisors in $X$, i.e. $c_1,c_2 \in \CH{X}{1}{1}$. The intersection product of $c_1$ with $c_2$, denoted by $c_1 \cdot c_2$, is $\langle -1\rangle$-commutative:
	\[c_2 \cdot c_1 = \langle -1\rangle (c_1 \cdot c_2)\]
\end{proposition}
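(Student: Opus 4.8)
The statement is \cite[Subsection 3.4]{fasel}; I sketch the argument I would give.

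\emph{Strategy.} By Definition \ref{defintprod} the intersection product is $\Delta^*\circ\mu$, where $\mu$ denotes the exterior product and $\Delta\colon X\to X\times X$ the diagonal. Write $\gamma\colon X\times X\to X\times X$ for the transposition of the two factors, so that $\gamma\circ\Delta=\Delta$ and hence $\Delta^*\circ\gamma^*=(\gamma\circ\Delta)^*=\Delta^*$. The plan is to establish the identity
\[\gamma^*\bigl(\mu(c_2,c_1)\bigr)=\langle -1\rangle\,\mu(c_1,c_2)\qquad\text{in }\CH{X\times X}{2}{2},\]
and then to apply $\Delta^*$ to both sides: using $\Delta^*\circ\gamma^*=\Delta^*$ and the $\K{F}{0}$-linearity of $\Delta^*$ one gets $c_2\cdot c_1=\Delta^*\mu(c_2,c_1)=\langle -1\rangle\,\Delta^*\mu(c_1,c_2)=\langle -1\rangle\,(c_1\cdot c_2)$.

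\emph{The key computation.} Represent $c_1,c_2$ by Rost-Schmid cocycles in $\Cm^1(X,\KF{1})=\bigoplus_{x\in X^{(1)}}\KS{\kappa(x)}{0}{\nu_x}$, so that each component has the form $a_x\otimes t_x$ with $a_x\in\K{\kappa(x)}{0}=\GW(\kappa(x))$ and $t_x$ a basis of the one-dimensional $\kappa(x)$-space $\nu_x$. On components the exterior product is induced by the multiplication $\K{-}{0}\otimes\K{-}{0}\to\K{-}{0}$ together with the canonical identification $\nu_{(x_1,x_2)}\simeq\nu_{x_1}\otimes\nu_{x_2}$; thus the component of $\mu(c_1,c_2)$ at a point $(x_1,x_2)$ is $(a_{x_1}\cdot a_{x_2})\otimes(t_{x_1}\otimes t_{x_2})$. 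Passing to $\mu(c_2,c_1)$ and pulling back along $\gamma$ affects only two things. First, the coefficient product is taken in the opposite order, but the coefficients lie in $\K{-}{0}=\GW(-)$, which is commutative, so $a_{x_2}\cdot a_{x_1}=a_{x_1}\cdot a_{x_2}$ with no sign. Second, $\gamma$ acts on the normal bundle of $(x_1,x_2)$ by interchanging the two rank-one summands $\normalsheaf{x_1}{X}\oplus\normalsheaf{x_2}{X}$; the determinant of this interchange is multiplication by $-1$, so transporting the twist $t_{x_2}\otimes t_{x_1}$ to $\nu_{x_1}\otimes\nu_{x_2}$ yields $-(t_{x_1}\otimes t_{x_2})$. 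By Definition \ref{deftwMW}, replacing a basis $l$ of a twist line by $-l$ multiplies the coefficient by $\langle -1\rangle$, so the component of $\gamma^*\mu(c_2,c_1)$ at $(x_1,x_2)$ is $\langle -1\rangle\,(a_{x_1}\cdot a_{x_2})\otimes(t_{x_1}\otimes t_{x_2})$, i.e. $\langle -1\rangle$ times that of $\mu(c_1,c_2)$. As $\mu$ and $\gamma^*$ respect the Rost-Schmid differentials, this identity of representatives descends to the asserted identity in $\CH{X\times X}{2}{2}$.

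\emph{Main obstacle.} The only genuine work is in making the key computation precise, i.e. in fixing the sign conventions of the exterior product of Milnor-Witt cycle modules and, above all, the coherent identification of the determinants of normal bundles under the transposition $\gamma$: this is exactly where a single factor $-1$ enters and nothing more, and it matters that the Milnor-Witt degree of the coefficients here is $0$, so that no $\epsilon$-commutativity contribution $\epsilon^{(1-1)(1-1)}$ appears. This bookkeeping is carried out in \cite[Subsection 3.4]{fasel}, where the same argument for classes of codimensions $i$ and $i'$ produces the factor $\langle -1\rangle^{ii'}$. Alternatively, by bilinearity of the product (Proposition \ref{intscalar}) and a moving lemma one may assume $c_1,c_2$ supported on smooth divisors meeting transversally, in which case the local contribution of a codimension-two component $y$ of the intersection to $c_1\cdot c_2$ differs from its contribution to $c_2\cdot c_1$ only by interchanging the two wedge factors of $\nu_y$ — once more a factor $\langle -1\rangle$.
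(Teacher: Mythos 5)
The paper does not actually prove this proposition: it is stated with the citation \cite[Subsection 3.4]{fasel} and used as a black box, so there is no internal proof to compare against. Your sketch reconstructs the standard argument behind that citation, and it is essentially correct: the product is $\Delta^*\circ\mu$, the transposition $\gamma$ satisfies $\gamma\circ\Delta=\Delta$, and the whole sign is produced by transporting the twist under $\gamma$, since for classes in $\CH{X}{i}{i}$ the pointwise coefficients sit in $\K{-}{0}=\GW(-)$, which is commutative, so the $\epsilon$-graded commutativity of Milnor--Witt $K$-theory contributes nothing; the swap of the two rank-one summands of the normal bundle of a point of $X\times X$ multiplies the determinant by $(-1)^{1\cdot 1}$, and by the module structure of Definition \ref{deftwMW} this rescaling of the basis of the twist line multiplies the coefficient by $\langle -1\rangle$. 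Two points deserve a little more care than your sketch gives them, though neither affects the sign. First, the component of $\mu(c_1,c_2)$ does not literally live at ``the point $(x_1,x_2)$'': the scheme $\overline{\{x_1\}}\times\overline{\{x_2\}}$ may have several generic points, and the exterior product involves base-changing the coefficients to each of their residue fields; the identification of determinants of normal bundles, and hence the factor $\langle -1\rangle$, is unaffected. Second, $\Delta^*$ is a Gysin map for a regular closed immersion, not a naive restriction, so the identity $\Delta^*\circ\gamma^*=(\gamma\circ\Delta)^*=\Delta^*$ uses the functoriality of Gysin pullbacks (available in the cycle-module framework of \cite{feld} or \cite{fasel}). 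With those caveats your argument is the one carried out in the cited source, where the same bookkeeping in codimensions $i$ and $i'$ yields the general factor $\langle -1\rangle^{ii'}$.
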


The following formula (or rather the formula in Corollary \ref{intformula}) is used to compute the intersection product in Sections \ref{sechowto} and \ref{seccompute}. This theorem, which has been proved by D\'eglise, will be made available in the second part of his notes \cite{deglisecourse}; we give a proof sketch below.

\begin{theorem}\label{intformuladeg}
	Let $X$ be a smooth $F$-scheme. Let $D_1,D_2$ be smooth integral divisors in $X$ such that $D_1 \cap D_2$ is of codimension $2$ in $X$. For all $i \in \{1,2\}$, let $g_i$ be a local parameter for $D_i$, i.e. $g_i$ is a uniformizing parameter for $\Os_{X,D_i}$. The intersection product of $1 \otimes \overline{g_1}^* \in \CH{X}{1}{1}$ (over the generic point of $D_1$) with $1 \otimes \overline{g_2}^* \in \CH{X}{1}{1}$ (over the generic point of $D_2$) is the class in $\CH{X}{2}{2}$ of the sum over the generic points $x$ of the irreducible components of $D_1 \cap D_2$ of $(m_x)_\epsilon \langle u_x\rangle \otimes (\overline{\pi_x}^* \otimes \overline{g_1}^*)$ (over the point $x$), where $\pi_x$ is a uniformizing parameter for $\Os_{X,x}/(g_1)$, $u_x$ is a unit in $\Os_{X,x}/(g_1)$ and $m_x \in \Z$ such that $g_2 = u_x \pi_x^{m_x} \in \Os_{X,x}/(g_1)$.
\end{theorem}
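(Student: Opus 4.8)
The plan is to unwind the definition of the intersection product (Definition \ref{defintprod}) and then to identify the resulting Gysin pullback with an explicit residue computation. First I would treat the easy half: the exterior product $\mu$ sends the pair $(1 \otimes \overline{g_1}^*, 1 \otimes \overline{g_2}^*)$ to the fundamental class $1 \otimes (\overline{g_1}^* \boxtimes \overline{g_2}^*) \in \CH{X \times X}{2}{2}$ of the smooth integral codimension-$2$ subscheme $D_1 \times D_2$, with its canonical orientation coming from the splitting $\normalsheaf{D_1 \times D_2}{X \times X} \cong (\normalsheaf{D_1}{X} \boxtimes \Os) \oplus (\Os \boxtimes \normalsheaf{D_2}{X})$. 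It then remains to apply $\Delta^*$, the Gysin pullback along the diagonal $\Delta \colon X \to X \times X$. The crucial geometric point is that $\Delta(X) \cap (D_1 \times D_2) \cong D_1 \cap D_2$, which by hypothesis has codimension $2$ in $X$; a dimension count ($2\dim X - (\dim X - 2) = \dim X + 2 = \operatorname{codim}\Delta + \operatorname{codim}(D_1\times D_2)$) shows that $\Delta(X)$ meets $D_1 \times D_2$ properly in $X \times X$.

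Because the intersection is proper, $\Delta^*$ of the fundamental class of $D_1 \times D_2$ is represented by a cocycle supported on $D_1 \cap D_2$: over each generic point $x$ of an irreducible component of $D_1 \cap D_2$, its coefficient is the Milnor--Witt intersection multiplicity of $D_1$ and $D_2$ at $x$, tensored with the orientation of $\nu_x$ induced by $\overline{g_1}^* \boxtimes \overline{g_2}^*$ through the canonical isomorphisms (in particular $\normalsheaf{\Delta(X)}{X \times X} \cong T_X$). To compute this multiplicity at $x$, I would choose local coordinates completing $g_1$ to a regular system of parameters, so that $g_1$ is a uniformizer of $\Os_{X, D_1}$ and $\pi_x$ is a uniformizer of the discrete valuation ring $\Os_{X,x}/(g_1) = \Os_{D_1,x}$, and factor $\Delta^*$ through the codimension-one Gysin map (residue) along $D_1$ — which one does by deformation to the normal cone. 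The multiplicity then becomes the residue $\partial^{\pi_x}_{v_x}$ of $[g_2] \in \K{\kappa(D_1)}{1}$, where $g_2$ now denotes its image $u_x\pi_x^{m_x}$ in $\Os_{D_1,x}$; by Proposition \ref{propresidue} and the $n = 1$ instance of Theorem \ref{computeresidue} one gets $\partial^{\pi_x}_{v_x}([u_x\pi_x^{m_x}]) = \langle \overline{u_x}\rangle (m_x)_\epsilon$. Tracking the orientation through the same reduction produces the factor $\overline{\pi_x}^* \otimes \overline{g_1}^*$, and pushing forward along $i_1 \colon D_1 \hookrightarrow X$ (equivalently, absorbing the twist by $\det\normalsheaf{D_1}{X}$) yields the stated cocycle; summing over the components of $D_1 \cap D_2$ gives the theorem, and in particular exhibits the displayed cycle as a cocycle in $\Cm^2(X,\KF{2})$.

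The main obstacle is the second step: proving that the refined Gysin pullback along a regular embedding, evaluated on the fundamental class of a subscheme meeting the embedded locus properly, is genuinely computed by the quadratically weighted cycle $\sum_x \langle \overline{u_x}\rangle (m_x)_\epsilon [x]$ with the correct $\det$-twists. Classically one only sees the length $m_x = \operatorname{length}(\Os_{X,x}/(g_1,g_2))$; upgrading this to the Milnor--Witt intersection multiplicity $\langle \overline{u_x}\rangle (m_x)_\epsilon$ is exactly where the $\epsilon$-enhanced formalism and the explicit residue formula of Theorem \ref{computeresidue} are needed, and the orientation bookkeeping across the exterior product, the diagonal Gysin map, and the pushforward must be carried out carefully (one should not simply localize $X$ at $x$, since that kills $\CH{X}{2}{2}$ and makes the identity vacuous — the argument has to keep track of the class, not just its coefficient at $x$). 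This is the content delegated to D\'eglise's notes \cite{deglisecourse}.
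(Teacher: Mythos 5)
Your overall strategy --- unwind Definition \ref{defintprod} as exterior product followed by the diagonal Gysin map, and aim for the residue computation $\partial^{\pi_x}_{v_x}([u_x\pi_x^{m_x}]) = \langle \overline{u_x}\rangle (m_x)_\epsilon$ --- correctly locates both the starting point and the endpoint, and the endpoint computation (the $n=1$ case of Theorem \ref{computeresidue}) is exactly right. But the bridge between them is missing, and you have placed the entire content of the theorem into the one step you do not carry out. The assertion that, because $\Delta(X)$ meets $D_1\times D_2$ properly, $\Delta^*$ of the fundamental class of $D_1\times D_2$ is given at each generic point $x$ of $D_1\cap D_2$ by ``the Milnor--Witt intersection multiplicity'' is not a deduction from anything in the paper or the cited literature: in this framework there is no prior notion of Milnor--Witt intersection multiplicity and no general ``proper intersection implies multiplicity formula'' theorem to invoke, so that sentence restates what is to be proved. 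Likewise, ``factor $\Delta^*$ through the codimension-one Gysin map along $D_1$, \dots\ by deformation to the normal cone'' names a technique without supplying the identity that would make the factorization usable.

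The paper's argument supplies precisely the mechanism you are missing, and it is not optional. One first writes $1\otimes\overline{g_1}^*=(i_1)_*(\Theta_1(1))$, the pushforward of the oriented fundamental class of $D_1$; the projection formula then converts the intersection product into $(i_1)_*\bigl(\Theta_1(1)\cdot i_1^*(1\otimes\overline{g_2}^*)\bigr)$, reducing everything to the Gysin pullback along the single \emph{divisor} $D_1$; and for a principal divisor there is the explicit identity $i_1^! = \partial_1\circ\gamma_{[g_1]}\circ j_1^!$ (D\'eglise--Feld--Jin), which expresses that Gysin map as a boundary map of the Rost--Schmid complex --- the one object Theorem \ref{computeresidue} actually computes. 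Without the projection formula and this identity, your claim that ``the multiplicity becomes the residue of $[g_2]$ along $\Os_{D_1,x}$'' has no justification; note moreover that the boundary map produced by this identity is a priori computed along the closure of the point supporting the cycle being restricted, so even with the mechanism in place there remains genuine unit-and-orientation bookkeeping to reach the normal form $(m_x)_\epsilon\langle u_x\rangle\otimes(\overline{\pi_x}^*\otimes\overline{g_1}^*)$, which your sketch also elides. Your closing paragraph honestly flags the central step as ``delegated to D\'eglise's notes,'' but what is delegated there is the theorem itself rather than a technical verification; as it stands the proposal is a plan with a gap at its central step, not a proof.
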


The ideas of the proof are the following:
\begin{itemize}
	\item Reduce the problem to the case where $D_1 = \di(g_1)$. 
	\item Denoting by $i_1 : D_1 \to X$ the inclusion and by $\Theta_1 : \CH{D_1}{0}{0} \to \CHMW{D_1}{0}{0}{\nu_{D_1}}$ (where $\nu_{D_1}$ is the determinant of the normal sheaf of $D_1$ in $X$) the isomorphism which sends $1$ to $1 \otimes \overline{g_1}^*$, check that $1 \otimes \overline{g_1}^* \in \CH{X}{1}{1}$ is equal to $(i_1)_*(\Theta_1(1))$.
	\item Use the projection formula (Theorem 3.19 in \cite{fasel}) to show that:
	\[(i_1)_*(\Theta_1(1)) \cdot (1 \otimes \overline{g_2}^*) = (i_1)_*(\Theta_1(1) \cdot (i_1)^*(1 \otimes \overline{g_2}^*))\]
	\item Use Proposition 3.2.15 in \cite{deglisefeldjin}, which states that if $i$ is the closed immersion of a principal divisor $D = \di(\pi)$ and $j$ is the complementary open immersion to $i$, then $i^! = \partial \circ \gamma_{[\pi]} \circ j^!$, to show that $(i_1)^* = \partial_1 \circ \gamma_{[g_1]} \circ (j_1)^*$, with $j_1$ the complementary open immersion to $i_1$, $\partial_1$ the boundary map associated to the boundary triple $(D_1,i_1,X,j_1,X \setminus D_1)$ and $\gamma_{[g_1]}$ the multiplication by $[g_1]$.
	\item Deduce from the previous steps the equality below and conclude:
	\[(1 \otimes \overline{g_1}^*) \cdot (1 \otimes \overline{g_2}^*) = (i_1)_*(\Theta_1(1) \cdot (\partial_1 \circ \gamma_{[g_1]} \circ (j_1)^*)(1 \otimes \overline{g_2}^*))\]
\end{itemize}

%Recall that $\Os_{X,D_i} = colim_{U \text{open in} X \text{such that} U \cap D_i \neq \emptyset} \Os_X(U)$.

\begin{corollary}\label{intformula}
	Let $X$ be a smooth $F$-scheme. Let $D_1,D_2$ be smooth integral divisors in $X$ such that $D_1 \cap D_2$ is of codimension $2$ in $X$. For all $i \in \{1,2\}$, let $g_i$ be a local parameter for $D_i$ and $f_i$ be a unit in $\kappa(D_i) = \Os_{X,D_i} / \mathfrak{m}_{X,D_i}$ such that for all generic points $x$ of irreducible components of $D_1 \cap D_2$, $f_i \in \kappa(x) = \Os_{X,x} / \mathfrak{m}_{X,x}$ is a unit. The intersection product of $\langle f_1\rangle \otimes \overline{g_1}^* \in \CH{X}{1}{1}$ (over the generic point of $D_1$) with $\langle f_2\rangle \otimes \overline{g_2}^* \in \CH{X}{1}{1}$ (over the generic point of $D_2$) is the class in $\CH{X}{2}{2}$ of the sum over the generic points $x$ of the irreducible components of $D_1 \cap D_2$ of $(m_x)_\epsilon \langle f_1 f_2 u_x\rangle \otimes (\overline{\pi_x}^* \otimes \overline{g_1}^*)$ (over the point $x$), where $\pi_x$ is a uniformizing parameter for $\Os_{X,x}/(g_1)$, $u_x$ is a unit in $\Os_{X,x}/(g_1)$ and $m_x \in \Z$ such that $g_2 = u_x \pi_x^{m_x} \in \Os_{X,x}/(g_1)$.
\end{corollary}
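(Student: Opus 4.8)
The plan is to reduce the statement to Theorem \ref{intformuladeg} by absorbing the units $f_1$ and $f_2$ into the choice of local parameters. The key observation is that, for $i\in\{1,2\}$, the element $\langle f_i\rangle \otimes \overline{g_i}^*$ is the same Rost-Schmid cycle as $1 \otimes \overline{g_i'}^*$ for a suitably modified local parameter $g_i'$ of $D_i$, so that one is reduced to the ``$f_i = 1$'' case already treated in Theorem \ref{intformuladeg}.

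First I would construct the modified parameters. Let $x_1,\dots,x_r$ be the finitely many generic points of the irreducible components of $D_1 \cap D_2$ and let $A$ be the semilocalization of $\Os_X$ at $\{x_1,\dots,x_r\}$; since each $x_k$ lies on $D_i$, the generic point $\eta_{D_i}$ of $D_i$ determines a point of $\Spec(A)$ with $A_{\eta_{D_i}} = \Os_{X,D_i}$, and the prime $\mathfrak p_i$ of $A$ defining $D_i$ is contained in the Jacobson radical of $A$. By the hypotheses on $f_i$, the image of $f_i$ in $\kappa(D_i) = \mathrm{Frac}(A/\mathfrak p_i)$ is a unit of the semilocal domain $A/\mathfrak p_i$; lifting it along the reduction $A^* \twoheadrightarrow (A/\mathfrak p_i)^*$ (which is surjective since $\mathfrak p_i$ lies in the radical) yields $h_i \in A^*$ with image $f_i$ in $\kappa(D_i)$, hence $h_i \in \Os_{X,D_i}^*$ and $h_i \in \Os_{X,x_k}^*$ for every $k$. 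Set $g_i' := h_i g_i$, a new local parameter for $D_i$ lying in each $\Os_{X,x_k}$ with $\Os_{X,x_k}/(g_i') = \Os_{X,x_k}/(g_i)$. Since $\overline{h_i g_i} = f_i\,\overline{g_i}$ in $\mathfrak m_{X,D_i}/\mathfrak m_{X,D_i}^2$ and $\langle a^{-1}\rangle = \langle a\rangle$ in Milnor-Witt $K$-theory, we get $1 \otimes \overline{g_i'}^* = \langle f_i^{-1}\rangle \otimes \overline{g_i}^* = \langle f_i\rangle \otimes \overline{g_i}^*$ in $\KS{\kappa(D_i)}{0}{\det(\normalsheaf{D_i}{X})}$; in particular $(\langle f_1\rangle \otimes \overline{g_1}^*)\cdot(\langle f_2\rangle \otimes \overline{g_2}^*) = (1 \otimes \overline{g_1'}^*)\cdot(1 \otimes \overline{g_2'}^*)$.

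Then I would apply Theorem \ref{intformuladeg} to $D_1, D_2$ with the local parameters $g_1', g_2'$: it identifies the last product with the class in $\CH{X}{2}{2}$ of $\sum_x (m_x')_\epsilon \langle u_x'\rangle \otimes (\overline{\pi_x'}^* \otimes \overline{g_1'}^*)$, where $\pi_x'$ is a uniformizer of $\Os_{X,x}/(g_1')$, $u_x'$ a unit there, and $g_2' = u_x'(\pi_x')^{m_x'}$ in $\Os_{X,x}/(g_1')$. As $h_1 \in \Os_{X,x}^*$ one has $\Os_{X,x}/(g_1') = \Os_{X,x}/(g_1)$, so one may take $\pi_x' = \pi_x$ and $m_x' = m_x$; from $g_2' = h_2 g_2 = (h_2 u_x)\pi_x^{m_x}$ one gets $u_x' = h_2 u_x$, whose image in $\kappa(x)$ is $f_2\,\overline{u_x}$, while $\overline{g_1'}^* = f_1^{-1}\,\overline{g_1}^*$ in $\nu_x$. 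Using $\langle a\rangle\langle b\rangle = \langle ab\rangle$ and $\langle a^{-1}\rangle = \langle a\rangle$, each summand therefore equals
\[(m_x')_\epsilon \langle u_x'\rangle \otimes (\overline{\pi_x'}^* \otimes \overline{g_1'}^*) = (m_x)_\epsilon \langle f_1^{-1}\rangle\langle f_2 u_x\rangle \otimes (\overline{\pi_x}^* \otimes \overline{g_1}^*) = (m_x)_\epsilon \langle f_1 f_2 u_x\rangle \otimes (\overline{\pi_x}^* \otimes \overline{g_1}^*),\]
and summing over $x$ yields the asserted formula.

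The only genuinely delicate step is the construction of the common lifts $h_i$: one must realize $f_i$, simultaneously, as a regular invertible function at the generic point of $D_i$ and at all the generic points of the components of $D_1 \cap D_2$, which is precisely what the hypotheses on $f_i$ ensure and which the passage to the semilocal ring $A$ makes explicit. Everything else is routine bookkeeping in twisted Milnor-Witt $K$-theory, together with the computation rules $\langle a\rangle\langle b\rangle = \langle ab\rangle$ and $\langle a^{-1}\rangle = \langle a\rangle$.
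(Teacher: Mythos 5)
Your proof is correct and follows essentially the same route as the paper's: both rewrite $\langle f_i\rangle \otimes \overline{g_i}^*$ as $1 \otimes \overline{g_i'}^*$ for a local parameter $g_i'$ obtained by multiplying $g_i$ by a unit representing $f_i$, apply Theorem \ref{intformuladeg}, and then unwind the twists using $\langle a^{-1}\rangle = \langle a\rangle$ and $(g_1') = (g_1)$ in $\Os_{X,x}$. Your semilocalization argument for producing a single lift $h_i$ that is simultaneously a unit at the generic point of $D_i$ and at all the points $x$ is a slightly more explicit version of the representative-choosing step that the paper handles informally in its first paragraph.
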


\begin{proof}
	First note that, with the notations above, $f_i \in \kappa(x)$ is well-defined since if $f_i$ and $f'_i$ are two representatives in $\Os_{X,D_i}$ of $f_i \in \kappa(D_i)$ (hence differ by an element of $\mathfrak{m}_{X,D_i}$) and if $f_i, f'_i \in \Os_{X,x}$ are sent by the canonical morphism $\psi : \Os_{X,x} \to \Os_{X,D_i}$ to $f_i, f'_i \in \Os_{X,D_i}$ respectively, then $f_i,f'_i \in \Os_{X,x}$ differ by an element of $\mathfrak{m}_{X,x}$ (since $\psi^{-1}(\mathfrak{m}_{X,D_i}) \subset \mathfrak{m}_{X,x}$).
	
	Note that for all $i \in \{1,2\}$, $\langle f_i\rangle \otimes \overline{g_i}^* = 1 \otimes \overline{f_ig_i}^*$ with $f_i g_i$ a local parameter for $D_i$ ($\overline{f_ig_i} \in \mathfrak{m}_{X,D_i}/\mathfrak{m}_{X,D_i}^2$ is well-defined since $f_i \in \Os_{X,D_i} / \mathfrak{m}_{X,D_i}$ and $g_i \in \mathfrak{m}_{X,D_i}$ and (a representative of) $f_i g_i \in \mathfrak{m}_{X,D_i}$ is a generator of $\mathfrak{m}_{X,D_i}$ since (a representative of) $f_i$ is a unit in $\Os_{X,D_i}$ and $g_i$ is a generator of $\mathfrak{m}_{X,D_i}$).
	
	Therefore, by Theorem \ref{intformuladeg}, the intersection product of $\langle f_1\rangle \otimes \overline{g_1}^*$ with $\langle f_2\rangle \otimes \overline{g_2}^*$ is the sum over the generic points $x$ of the irreducible components of $D_1 \cap D_2$ of $(m_x)_\epsilon \langle v_x\rangle \otimes (\overline{\pi_x}^* \otimes \overline{f_1g_1}^*)$ (over the point $x$), where $\pi_x$ is a uniformizing parameter for $\Os_{X,x}/(f_1g_1)$, $v_x$ is a unit in $\Os_{X,x}/(f_1g_1)$ and $m_x \in \Z$ such that $f_2g_2 = v_x \pi_x^{m_x} \in \Os_{X,x}/(f_1g_1)$.
	
	Note that since $f_2$ is a unit in $\kappa(x)$, $u_x := f_2^{-1}v_x$ is a unit in $\kappa(x)$ and $(m_x)_\epsilon \langle v_x\rangle \otimes (\overline{\pi_x}^* \otimes \overline{f_1g_1}^*) = (m_x)_\epsilon \langle f_2u_x\rangle \otimes (\overline{\pi_x}^* \otimes \overline{f_1g_1}^*)$. Further note that since $f_1$ is a unit in $\kappa(x)$, the ideal $(f_1g_1)$ is equal to the ideal $(g_1)$ in $\Os_{X,x}$ and $(m_x)_\epsilon \langle f_2u_x\rangle \otimes (\overline{\pi_x}^* \otimes \overline{f_1g_1}^*) = (m_x)_\epsilon \langle f_1f_2u_x\rangle \otimes (\overline{\pi_x}^* \otimes \overline{g_1}^*)$. Finally note that, by definition of $u_x$, $g_2 = u_x \pi_x^{m_x} \in \Os_{X,x}/(g_1)$.
\end{proof}

\bibliographystyle{alpha}
\bibliography{Bibliography}

\end{document}